
\documentclass[journal, twocolumn]{IEEEtran}
\usepackage{graphicx}
\graphicspath{{images_note/}}
\usepackage{subcaption}
\usepackage[]{algorithm2e}
\usepackage{amsmath,amsthm}
\usepackage{verbatim}
\usepackage{amsfonts}
\usepackage{multirow}
\usepackage{changepage}
\usepackage{cite}
\usepackage{tabularx}
\usepackage{adjustbox}
\usepackage{mathtools}
\usepackage{amssymb}
\usepackage{soul}

\usepackage{color}

\DeclareMathOperator{\E}{\mathbb{E}}

\def\real{\mathbb{R}}

\def\natural{\mathbb{N}}
\DeclareMathOperator*{\argmax}{arg max}

\newcommand{\until}[1]{\{1,\dots, #1\}}

\newcommand{\map}[3]{#1: #2 \mapsto #3}

\newcommand\oprocendsymbol{\hbox{$\square$}}
\newcommand\oprocend{\relax\ifmmode\else\unskip\hfill\fi\oprocendsymbol}

\newcommand\bit[1]{\textit{\textbf{#1}}}

\def \mc {\mathcal}

\def \mr {\mathrm}

\newtheorem{theorem}{Theorem}
\newtheorem{proposition}{Proposition}
\newtheorem{lemma}{Lemma}
\newtheorem{corollary}{Corollary}

\newtheorem{remark}{Remark}

\newtheorem{definition}{Definition}

\makeatletter
\let\NAT@parse\undefined
\makeatother
\usepackage[square, sort&compress, numbers]{natbib}

\begin{document}

\title{Incentivizing Collaboration in Heterogeneous Teams via Common-Pool Resource Games}
\author{Piyush Gupta \hspace{0.5in} Shaunak D. Bopardikar  \hspace{0.5in} Vaibhav Srivastava
\thanks{This work has been supported by NSF Award IIS-1734272. A preliminary version of this work~\cite{gupta2019achieving} was presented at the 58th Conference on Decision and Control. We expand on our work in~\cite{gupta2019achieving} by providing detailed proofs and analytic upper bound on the measures of inefficiency for the unique PNE.}
\thanks{Piyush Gupta (guptapi1@msu.edu), Shaunak D.~Bopardikar (shaunak@egr.msu.edu) and Vaibhav Srivastava (vaibhav@egr.msu.edu) are with Department of Electrical and Computer Engineering, Michigan State University, East Lansing, Michigan, 48824, USA.}}

\maketitle

\begin{abstract}
We consider a team of heterogeneous agents that is collectively responsible for servicing, and subsequently reviewing,  a stream of homogeneous tasks. Each agent has an associated mean service time and a mean review time for servicing and reviewing the tasks, respectively.
Agents receive a reward based on their service and review admission rates. The team objective is to collaboratively maximize the number of ``serviced and reviewed" tasks. We formulate a Common-Pool  Resource  (CPR)  game and design utility functions to incentivize collaboration among heterogeneous agents in a decentralized manner. We show the existence of a unique Pure Nash Equilibrium (PNE), and establish convergence of best response dynamics to this unique PNE. Finally, we establish an analytic upper bound on three measures of inefficiency of the PNE, namely the price of anarchy, the ratio of the total review admission rate, and the ratio of latency, along with an empirical study.


\end{abstract}

\begin{IEEEkeywords}
Best response potential, CPR game, Price of anarchy, PNE, Team collaboration, Utility design.  
\end{IEEEkeywords}

\IEEEpeerreviewmaketitle

\section{Introduction}
As we become more connected around the globe, team collaboration becomes a necessity to produce results. Although modern workplaces endeavor to be social and collaborative affairs, most workplaces fail to solve the conundrum of achieving efficient collaboration among diverse, dynamic, and dispersed team-members~\cite{haas2016secrets}. An effective collaboration requires each team-member to efficiently work on their tasks while backing up other team-members by  monitoring and providing feedback. Such team backup behavior improves team performance by mitigating the lack of certain skills in some team-members. 
Often times, lack of incentives to backup other members results in team-members operating individually and in a poor team performance. Therefore, for effective team performance, it is imperative to design appropriate incentives that facilitate collaboration among the agents without affecting their individual performance. 

Depending on the hierarchical structure, organizations are often distinguished as either mechanistic (bureaucratic), or  organic (professional)~\cite{burns2005mechanistic}. 
While mechanistic  organizations  are  characterized by a rigid hierarchy, high levels of formalization, and centralized decision making, organic organizations are flexible with weak  or  multiple  hierarchies, and have low  levels of formalization~\cite{lunenburg2012mechanistic}. The flexibility in  organic organizations leads to decentralized decision-making, and therefore, enables quick and easy reaction to changes in the environment. Hence, organic organizations cope best with the unpredictable and unstable environments that surrounds them, as compared to mechanistic organizations which are appropriate in stable environments and for routine tasks.
For organic organizations, which lack centralized decision-making authority, it is essential to incentivize collaboration among heterogeneous team-members to achieve efficient team performance. 

CPR games~\cite{keser1999strategic,hota2016fragility} is a class of resource sharing games in which players jointly manage a common pool of resource and make strategic decisions to maximize their utilities. In this paper, we design incentives for the heterogeneous agents to facilitate aforementioned team backup behavior. In particular, we connect the class of problems involving human-team-supervised autonomy~\cite{goodrich2007using} with the CPR games, and design utilities that yield the desired behavior. Within the queueing theory paradigm that has been used to study these problems~\cite{JP-VS-etal:12t,  VS-RC-CL-FB:11zc,PG-VS:18d}, we show that CPR games provide a formal framework to analyze and design organic teams. Specifically,
utilizing the CPR framework allows us to incentivize team collaboration among heterogeneous agents in a decentralized manner, i.e., efficient social utility is achieved despite self-interested actions of the individuals. 

Game-theoretic approaches have been utilized for problems in distributed control~\cite{marden2018game}, wherein the overall system is driven to an efficient equilibrium strategy that is close to the social optimum
through an appropriate design of utility functions~\cite{arslan2007autonomous}.
Price of Anarchy (PoA)~\cite{basar1999dynamic} is often used to characterize efficiency of the equilibrium strategies in a game. Associated analysis techniques
utilize smoothness property of the utility functions~\cite{roughgarden2009intrinsic}, leverage submodularity of the welfare function~\cite{marden2014generalized}, or solve an auxiliary optimization problem~\cite{deori2018price,paccagnan2019utility}. These approaches do not immediately apply to our setup.
Instead, we follow a new line of analysis to obtain bounds on PoA by constructing a homogeneous CPR game, {for which we show that the equilibrium strategy is also the social optimum (PoA=1),} and relating its utility to the original game.

Human-team-supervised autonomy is a class of motivating problems for our setup. Queueing theory has emerged as a popular paradigm to study these problems~\cite{JP-VS-etal:12t, VS-RC-CL-FB:11zc,PG-VS:18d}. However, these works predominantly consider a single human operator. There have been limited studies on human-team-supervised autonomy. These  include simulation based studies~\cite{gao2014modeling}, ad hoc design~\cite{hong2016human}, or non-interacting operators~\cite{srivastava2014knapsack}. Here, we focus on a game-theoretic approach to study one of the key features of the human-team-supervised autonomy: the team backup behavior, which  refers to the extent to which team-members help each other perform their roles~\cite{mekdeci2009modeling}. 



We model team backup behavior in the following way. We consider an unlimited supply of tasks from which each team-member may admit tasks for servicing at a constant rate.  We assume that each serviced task is stored in a common review pool for a second review. Each team-member can choose to spend a fraction of their time to review tasks from the common review pool and thereby provide a backup. {In our setup, any agent can review tasks from the common review pool, independent of who serviced the task. Therefore, any agent that services tasks can also participate in the review process without impacting the quality of review process. This is sensible in scenarios in which, for example, the review process involves performing a quality check using machines or verification through software.} Without any incentives, members may not choose to review the tasks as it may affect their individual performance. We focus on design of incentives, within the CPR game formalism, to facilitate team backup behavior. 


While we use human-team-supervised autonomy as a motivating example, our problem formulation can be applied to broad range of problems involving tandem queues~\cite{le1997theory}, where servicing and reviewing of tasks can be considered as the subsequent stages of the queueing-network. 
 Tandem queues are utilized to design efficient systems to study problems such as resource allocation, inventory management, process optimization, and quality control~\cite{thomopoulos2012fundamentals}. Existing game theoretic approaches~\cite{altman2005applications,xia2014service} to service rate control in tandem queues assume that a single server is present at each stage of the tandem queue and each server has its 
independent resources. In contrast, in our setup, multiple heterogeneous agents allocate their time at different stations based on their skill-sets and maximize the system throughput. Additionally, our mathematical techniques are applicable to many problems involving dual-screening process.  For example, in human-in-the-loop systems which are pervasive in areas such as search-and-rescue, semi-autonomous vehicle systems, surveillance, etc., humans often supervise (review) the actions (service) performed by the autonomous agents. In such settings, our framework incentivizes collaboration among heterogeneous agents.

 Our CPR formulation has features similar to the CPR game studied in~\cite{hota2016fragility,hota2018controlling}. In these works, authors utilize prospect theory to capture the risk aversion behavior of the players investing into a fragile CPR~\cite{ostrom1994rules} that fails if there is excessive investment in the CPR. In the case of CPR failure, no player receives any return from the CPR. While our  design of the common review pool is similar to the fragile CPR, our failure model incorporates the constraint that only serviced tasks can be reviewed. In contrast to the agent heterogeneity due to prospect-theoretic risk preferences in~\cite{hota2016fragility}, heterogeneity in our model arises due to differences in the agents' mean service and review times.

 The major contributions of this work are fivefold. First, we present a novel formulation of team backup behavior and design incentives, within the CPR game formalism, to facilitate such behavior ({Section~\ref{Problem Formulation}}). 
 Second, we show that there exists a unique PNE for the proposed game ({Section~\ref{Existence of PNE}}). Third, we show that the proposed game is a best response potential game as defined in~\cite{voorneveld2000best}, for which both sequential best response dynamics~\cite{dubey2006strategic} and simultaneous best reply dynamics~\cite{jensen2009stability} converge to the PNE ({Section~\ref{Convergence}}). Thus, the best response of self-interested agents in a decentralized team converge to the PNE. Fourth, we provide the structure of the social welfare solution ({Section~\ref{Social}}) and numerically quantify ({Section~\ref{Numerical Illustrations}}) different measures of the inefficiency for the PNE, namely the PoA, the ratio of the total review admission rate (TRI), and the ratio of latency (LI), as a function of a measure of heterogeneity.  While PoA is a widely used inefficiency metric, we define TRI and LI as other relevant measures for our setup based on the total review admission rate and latency (inverse of throughput), respectively.
 Finally, we provide an analytic upper bound for all three measures of the inefficiency (Section~\ref{Social}).

\section{Background and Problem Formulation} \label{Problem Formulation}
In this section, we describe the problem setup and formulate the problem using a game-theoretic framework. We also present some definitions that will be used in the paper. 
\subsection{Problem Description}{\label{Problem Setup}}
\begin{figure}
	\centering
	\includegraphics[width=0.75\linewidth, height=0.75\linewidth, keepaspectratio]{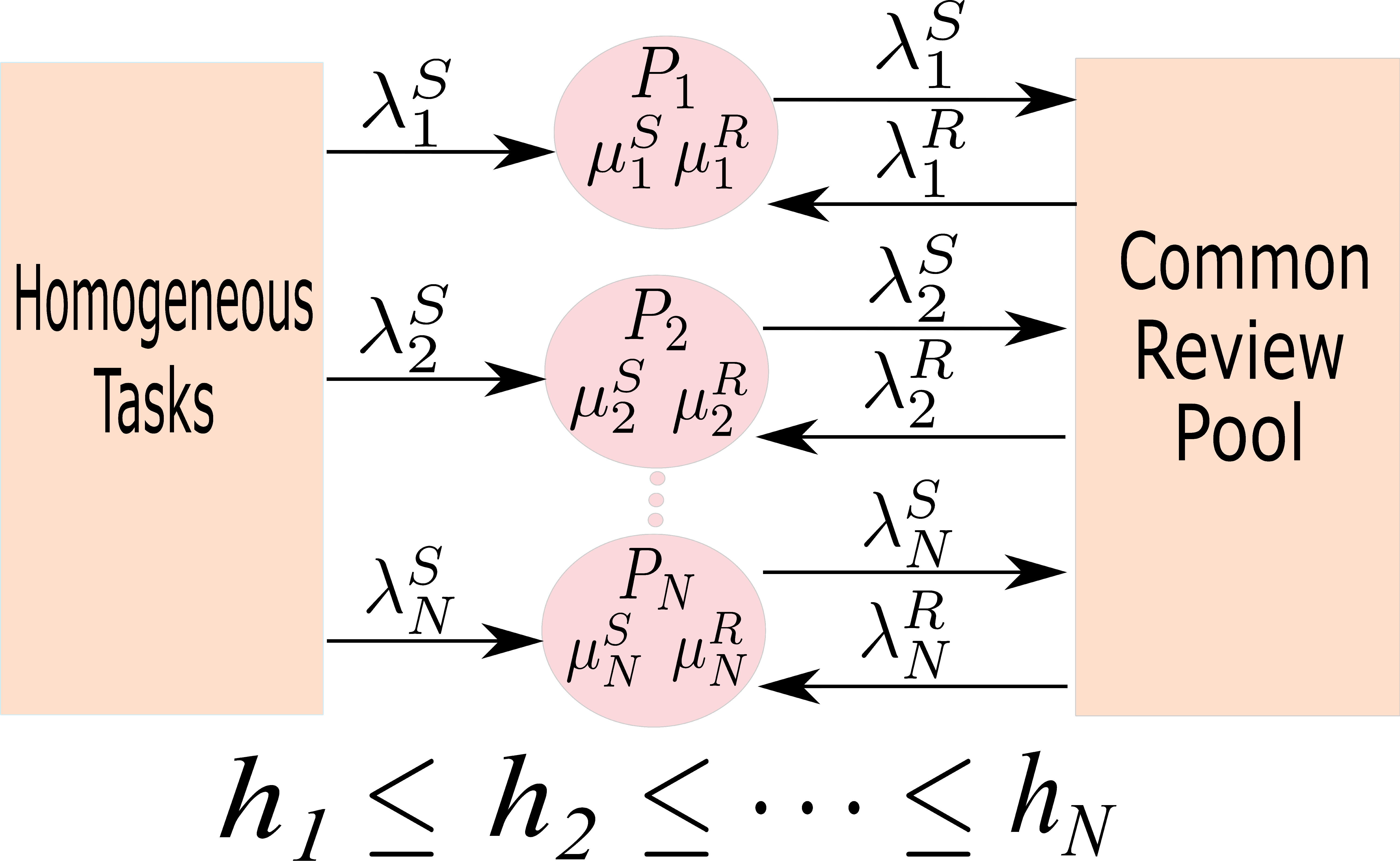}
    \caption{\footnotesize Player $i$ devotes her time to service homogeneous tasks (at a constant service admission rate $\lambda_i^S$) while reviewing serviced tasks from the common review pool (at a constant review admission rate $\lambda_i^R$). The maximum admission rate for player $i$ for servicing and reviewing the tasks is given by $\mu_i^S$ and $\mu_i^R$, respectively.}
    \label{fig:Model}
\end{figure}

We consider a heterogeneous team of $N\in \natural$ agents  tasked with servicing a stream of homogeneous tasks. These agents could be autonomous systems or human operators. Each task, after getting serviced by a team-member, gets stored in a common review pool for a second review. This second review is a feedback process in which any team-member can re-examine the serviced task from the common review pool for performance monitoring and quality assurance purposes.  Each agent $i \in \mc{N}= \until{N}$ may choose to spend a portion of her time to review the tasks from the common pool while spending her remaining time to service the incoming tasks. We consider heterogeneity among the operators due to the difference in their level of expertise and skill-sets in servicing and reviewing the tasks. This heterogeneity is captured by the average service time $(\mu_i^S)^{-1} \in \mathbb{R}_{>0}$ and average review time $(\mu_i^R)^{-1} \in \mathbb{R}_{>0}$ spent by operator $i \in \mc{N}$ on servicing and reviewing a task, respectively.


Let $\lambda_i^S \in [0, \ \mu_i^S]$ and $\lambda_i^R \in [0, \ \mu_i^R]$ be the deterministic service and review admission rates, i.e., the rates at which agent $i$ chooses to admit tasks for servicing and reviewing, respectively. 
Each agent $i$ can choose their service and review admission rate independent of other agents. The range of $\lambda_i^S$ and $\lambda_i^R$ have been chosen to satisfy the stability conditions (including marginal stability) for the service and review queues for operator $i \in \mc{N}$~\cite[Chapter 8]{cassandras2009introduction}. 



Suppose agent $i$ selects $\lambda_i^S$ and $\lambda_i^R$ as their service and review admission rates, then 
\[
\frac{\lambda_i^S}{\mu_i^S} + \frac{\lambda_i^R}{\mu_i^R} \le 1,
\]
where $\frac{\lambda_i^S}{\mu_i^S}$ (respectively, $\frac{\lambda_i^R}{\mu_i^R}$) is the average time the agent spends on servicing (respectively, reviewing) the tasks within a unit time. Thus, if the agent has selected a review admission rate $\lambda_i^R$, then the service admission rate satisfies
\begin{equation}{\label{eq:1}}
 \lambda_i^S \le  {\mu_i^S}-h_i\lambda_i^R,
\end{equation}
where $h_i:=\frac{\mu_i^S}{\mu_i^R}$ is the heterogeneity measure for the player $i$.






We consider self-interested agents  that receive  a  utility  based on their service and review admission rates. Hence, we will assume that agents operate at their maximum capacity, and equality holds in~\eqref{eq:1}. Fig.~\ref{fig:Model} shows the schematic of our problem setup. Note that only serviced tasks are available for review, and therefore,
\begin{equation}{\label{eq:2}}
   \sum_{i=1}^{N}\lambda_i^R \leq \sum_{i=1}^{N}\lambda_i^S.
\end{equation}
By substituting \eqref{eq:1} in \eqref{eq:2}, we obtain,
\begin{align}{\label{eq:3}}
    \sum_{i=1}^{N}a_i\lambda_i^R \leq \sum_{i=1}^{N}\mu_i^S,
\end{align}
where $ a_i:=(1+h_i)$. Eq.~\eqref{eq:3} represents the system constraint on the review admission rates chosen by agents. 

We are interested in incentivizing collaboration among the agents for the better team performance. Towards this end, we propose a game-theoretic setup defined below. 


\subsection{A Common-Pool Resource Game Formulation}
We now formulate our problem as a Common-Pool Resource (CPR) game. Henceforth, we would refer to each agent as a player. 
{A maximum service admission rate $\mu_i^S$ and a maximum review admission rate $\mu_i^R$ are associated with each player $i$,} based on her skill-set and level of expertise. Without loss of generality, let the players be labeled in increasing order of their heterogeneity measures, i.e., $h_1 \le \cdots \le h_N$. 

Let $S_i:=[0, \ \mu_i^R]$ be the strategy set for each player $i$, from which the player chooses her review admission rate for reviewing the tasks from the common review pool. {Since we have assumed~\eqref{eq:1} holds with equality, once} player $i$ decides her review admission rate $\lambda_i^R \in S_i$, her service admission rate for servicing the tasks $\lambda_i^S$ is given by the right hand side of~\eqref{eq:1}. Let $S=\prod_{i \in \mc{N}}S_i$ be the joint strategy space of all the players, where $\prod$ denotes the Cartesian product. Furthermore, we define $S_{-i}=\prod_{j \in \mc{N},j \neq i}S_j$ as the joint strategy space of all the players except player $i$. 


 For brevity of notation, we denote the total service admission rate and the total review admission rate by $\lambda_T^S= \sum_{i=1}^{N}\lambda_i^S$ and $\lambda_T^R= \sum_{i=1}^{N}\lambda_i^R$, respectively. Similarly, $\mu_T^S= \sum_{i=1}^{N}\mu_i^S$ and $\mu_T^R= \sum_{i=1}^{N}\mu_i^R$ denote the aggregated sum of the maximum service admission rates and maximum review admission rates of all the players, respectively. 
 
 Let $x \in \real$, defined by 
  \begin{equation}{\label{eq:x}}
     x = \lambda_T^S-\lambda_T^R=\mu_T^S-\sum_{i=1}^{N}{a_i}\lambda_i^R,
 \end{equation}
 be the slackness parameter for system constraint~\eqref{eq:3}. The constraint~\eqref{eq:3} is violated for negative values of $x$, i.e., when total review rate exceeds the total service rate. In
such an event, some players commit to review more tasks than that are available in the common review pool. The slackness parameter characterizes the gap between the total service admission rate and the total review admission rate for all the players.  In order to maximize high quality team throughput, i.e., the number of tasks that are both serviced and reviewed, we seek to incentivize the team  to operate close to $x=0$.

Each player $i$ receives a constant reward $r^S \in \real_{>0}$ for servicing each task. Hence, the service utility $u_i^S : S_i \mapsto \real_{>0}$ for player $i$ servicing the tasks at the service admission rate $\lambda_i^S$ is given by:
\begin{equation}{\label{eq:4}}
   u_i^S= \lambda_i^Sr^S.
\end{equation}
To incentivize collaboration among the agents, we design the review utility $u_i^R: S \mapsto \real_{>0}$ received for reviewing the tasks from the common review pool using two functions: a rate of return, $r^R : S \mapsto \real_{>0}$ for each reviewed task and a constraint probability $p: S \mapsto [0,1]$ of the common review pool. The constraint probability $p$ is a soft penalty on the violation of system constraint~\eqref{eq:3}.

We model the rate of return $r^R$ and the constraint probability $p$ in terms of the strategy of all the players through slackness parameter $x$.
 Furthermore, we assume that  $r^R$ is strictly decreasing in $x$. Therefore, {for each $x \in [0,\ \mu_T^S]$}, system constraint~\eqref{eq:3} is satisfied, and the rate of return is maximized at $x=0$. {The rate of return can be interpreted as the perks that the employer provides to all the employees for high quality service.} For example, an employer generates higher revenue based on the high quality throughput of her company, {i.e., based on the number of ``serviced and reviewed" tasks,
 which she redistributes among her employees as  perks  as per  their  contribution  to  the review process. Highest quality throughput is achieved by the company when the team efficiently reviews all the serviced tasks, i.e., when $x=0$.}

We introduce the constraint probability $p$ as a soft penalty of the violation of system constraint~\eqref{eq:3}, and therefore, we let $p=1$ if the constraint gets violated, i.e. when $x<0$. We assume that the constraint probability $p$ is non-increasing in $x$, 
and approaches 1 as $x$ approaches $0$. The class of sharply decreasing exponential functions, $p(\lambda_i^R, \lambda_{-i}^R)=\exp(-Ax),$ where $x \in [0, \ \mu_T^S]$ and $A \in \mathbb{R}_{>0}$, can be a good choice to effectively model the system constraint. If the constraint is violated with probability $p$, then $u_i^R=0$ for each player $i$.
Therefore, we define the utility $u_i^R$ by
\begin{equation}{\label{eq:5}}
    u_i^R(\lambda_i^R, \lambda_{-i}^R) = \begin{cases}
    0, & \!\!\!\!\!\!\!\!\!\!\!\!\!\!\!\!\!\  \text{with probability } p(\lambda_i^R,\lambda_{-i}^R),\\
    \lambda_i^Rr^R(\lambda_i^R,\lambda_{-i}^R), & \text{otherwise}.\\
    \end{cases}
\end{equation}

Let $u_i(\lambda_i^R, \lambda_{-i}^R)= u_i^S+ u_i^R$ be the total utility of player $i \in \mc{N}$. Each player $i$ tries to maximize her expected utility $\tilde u_i : S \mapsto \mathbb{R}$ defined by
\begin{align}{\label{eq:6}}
        \tilde u_i(\lambda_i^R, \lambda_{-i}^R) &= \E[u_i^S (\lambda_i^R, \lambda_{-i}^R) + u_i^R (\lambda_i^R, \lambda_{-i}^R)]\nonumber \\ 
        &= \lambda_i^Sr^S  +\lambda_i^Rr^R(\lambda_i^R,\lambda_{-i}^R)(1-p(\lambda_i^R,\lambda_{-i}^R)),
\end{align}
where the expectation is computed over the constraint probability $p$. Since $r^R$ and $p$ depend on the review admission rates of all the players only through the slackness parameter $x$, with a slight abuse of notation, we express $r^R(\lambda_i^R,\lambda_{-i}^R)$ and $p(\lambda_i^R,\lambda_{-i}^R)$ by $r^R(x)$ and $p(x)$, respectively. Substituting~\eqref{eq:1} in~\eqref{eq:6}, we get:
\begin{align}{\label{eq:7}}
   \tilde u_i&= \mu_i^Sr^S +\lambda_i^R\left[r^R(x)(1-p(x)) -h_ir^S\right] \nonumber \\
   &=: \mu_i^Sr^S +\lambda_i^Rf_i(x),
\end{align}
where $f_i: S \mapsto \mathbb{R}$ is defined by
\begin{equation}\label{eq:def-fi}
    f_i(\lambda_i^R,\lambda_{-i}^R)=f_i(x)=r^R(x)(1-p(x)) -h_ir^S.
\end{equation}

The function $f_i$ is the incentive for player $i$ to review the tasks. Note that player $i$  will choose a non-zero $\lambda_i^R$ if and only if she has a positive incentive to review the tasks,  i.e., $f_i(x) >0$. Otherwise, player $i$ drops out without reviewing any task ($\lambda_i^R=0$) and focuses solely on servicing of tasks ($\lambda_i^S=\mu_i^S$), thereby maximizing her expected utility given by $\tilde u_i=\mu_i^Sr^S$.

In the following, we will refer to the above CPR game by $\Gamma=(\mc{N},\{S_i\}_{i \in \mc{N}},\{\tilde u_i\}_{i \in \mc{N}})$. 
In this paper, we are interested in equilibrium strategies for the players that constitute a PNE defined below. 
\medskip
\begin{definition}[\bit{Pure Nash Equilibrium}] \label{definition1}
A PNE is a strategy profile ${\lambda^R}^*=\{{\lambda_i^R}^*\}_{i \in \mc{N}} \in S$, such that for each player $i \in \mc{N}$, $\tilde u_i({\lambda_i^R}^*,{\lambda_{-i}^R}^*) \geq \tilde u_i(\lambda_i^R,{\lambda_{-i}^R}^*)$, for any $\lambda_i^R \in S_i$.
\end{definition}
Let $\map{b_i}{S_{-i}}{S_i}$ defined by 
\[
b_i(\lambda_{-i}^R) \in \argmax_{\lambda_i^R \in S_i} \; \tilde u_i (\lambda_i^R, \ \lambda_{-i}^R), 
\]
be a \emph{best response} of player $i$ to the review admission rates of other players $\lambda_{-i}^R$. A PNE exists if and only if there exists an invariant strategy profile, ${\lambda^R}^*=\{{\lambda_i^R}^*\}_{i \in \mc{N}} \in S$, such that ${\lambda_i^R}^*=b_i({\lambda_{-i}^R}^*)$,  for each $i \in \mc{N}$.


\section{Existence and Uniqueness of PNE}\label{Existence of PNE}

In this section, we study the existence and uniqueness of the PNE for the CPR game $\Gamma$
under the system constraint~\eqref{eq:3}. Each player $i \in \mc{N}$ chooses a review admission rate from her strategy set $S_i=[0, \ \mu_i^R]$ and receives an expected utility $\tilde u_i$ given by~\eqref{eq:7}. For any given $\lambda_{-i}^R \in S_{-i}$, we obtain an upper bound $\overline{\lambda}_i^R : S_{-i} \mapsto S_i$ on $\lambda_i^R$ defined by 
\begin{equation*}{\label{eq:9}}
            \!\!\!\!\!\!\ \overline{\lambda}_i^R = \begin{cases}
            0, & \text{if } \Lambda_i < 0,\\
            \Lambda_i , & \!\!\  \text{if } 0\leq \Lambda_i \leq \mu_i^R,\\
            \mu_i^R, & \text{if } \Lambda_i > \mu_i^R,
            \end{cases}
        \end{equation*}
where $\Lambda_i:= \frac{\mu_T^S - \sum_{j \in \mc{N}, j \neq i}a_j\lambda_j^R}{a_i}$, such that for $\lambda_i^R \in [0, \  \overline{\lambda}_i^R) \subset S_i$, constraint~\eqref{eq:3} is automatically satisfied, and for $\lambda_i^R \in (\overline{\lambda}_i^R, \ \mu_i^R] \subset S_i$,  constraint~\eqref{eq:3} is violated. For $\lambda_i^R=\overline{\lambda}_i^R$, constraint~\eqref{eq:3} is satisfied if $\overline{\lambda}_i^R \in (0, \ \mu_i^R]$, and is violated if $\overline{\lambda}_i^R=0$. 

\begin{figure}
\centering
\begin{subfigure}[b]{0.32\linewidth}
	    \centering
        \includegraphics[width=1.1\linewidth, height=1.1\linewidth, keepaspectratio]{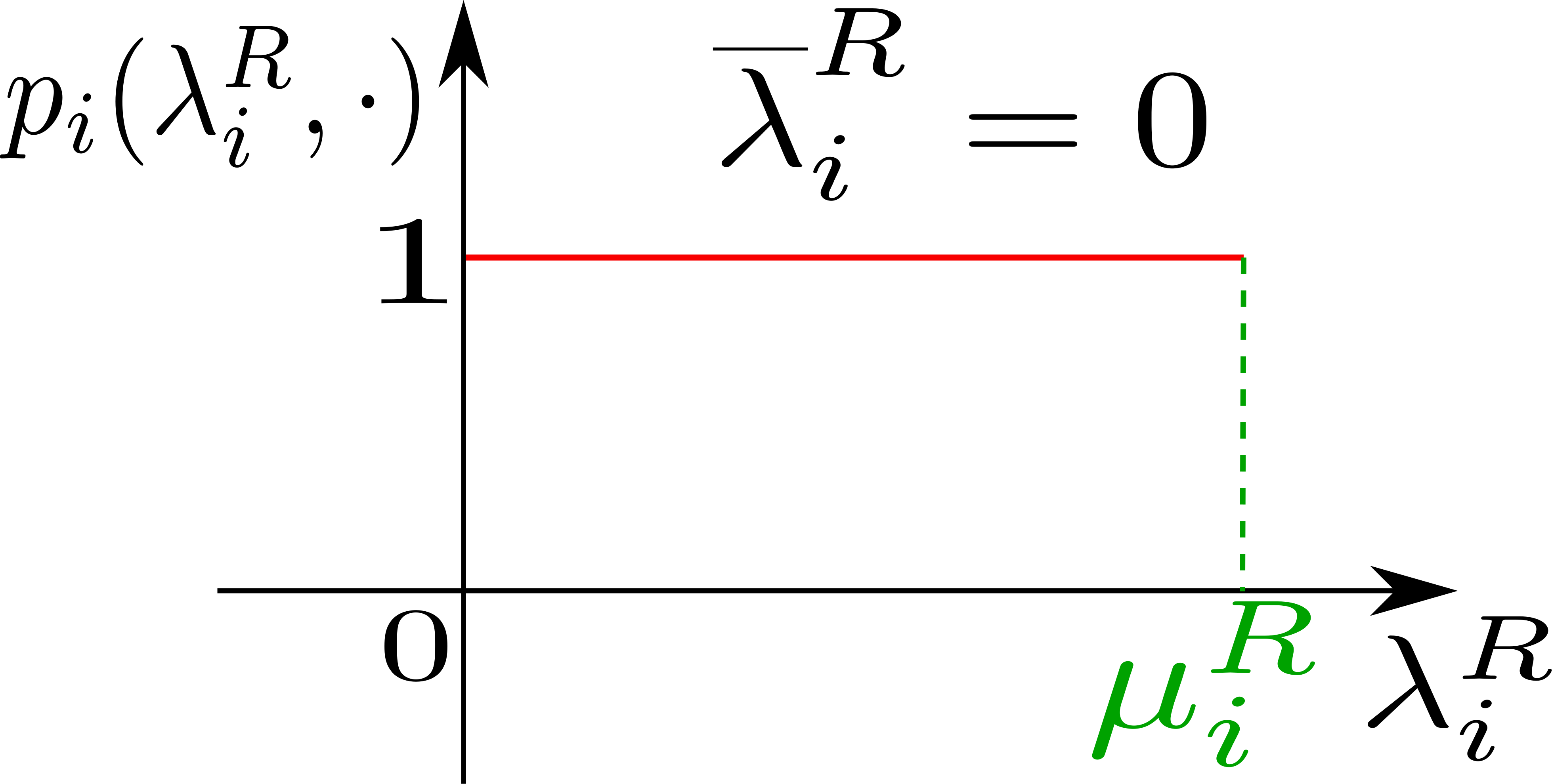}
        \caption{}
        \label{a}
    \end{subfigure}
\begin{subfigure}[b]{0.32\linewidth}
	    \centering
        \includegraphics[width=1.1\linewidth, height=1.1\linewidth, keepaspectratio]{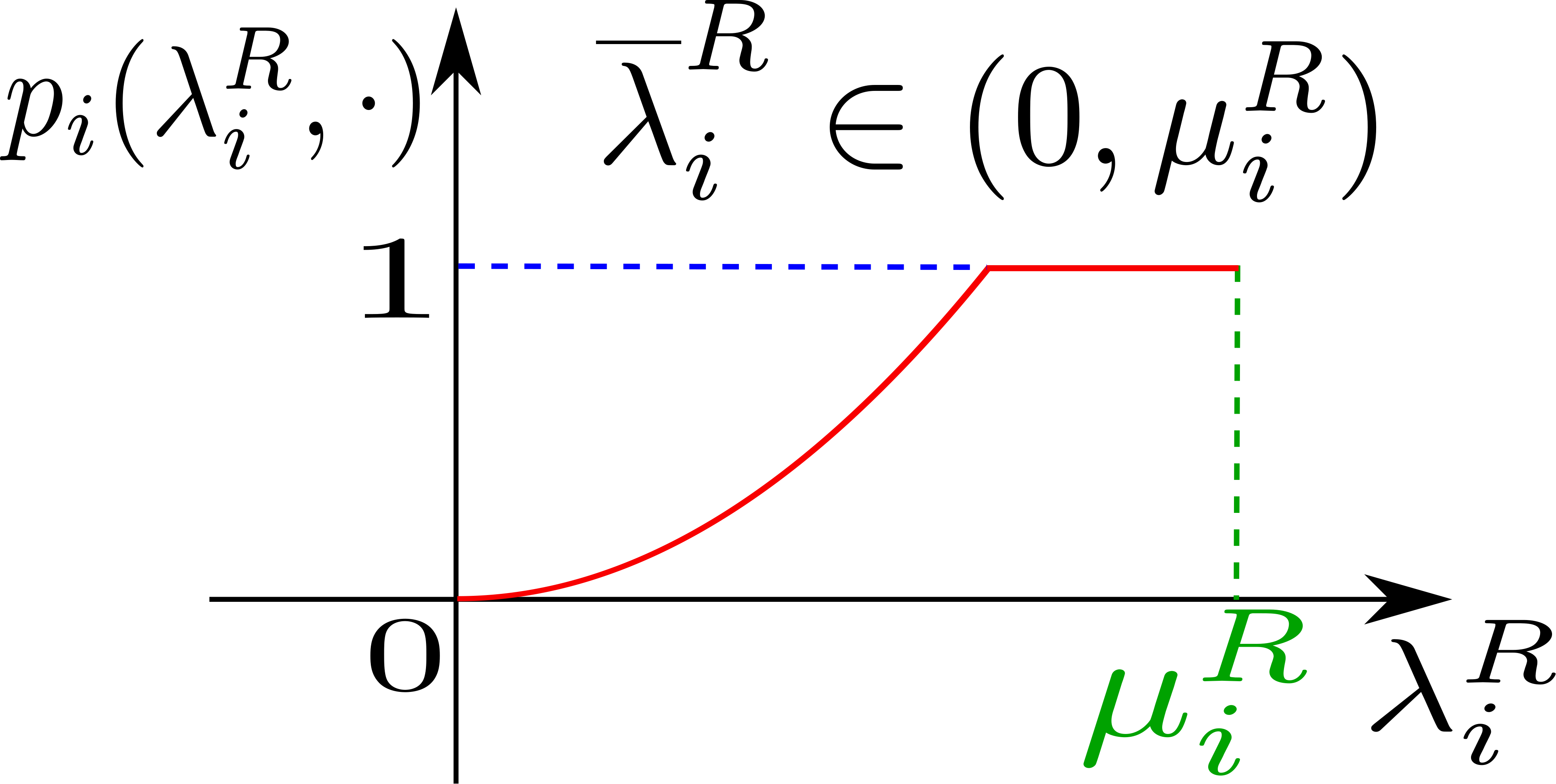}
        \caption{}
        \label{b}
\end{subfigure}
\begin{subfigure}[b]{0.32\linewidth}
	    \centering
        \includegraphics[width=1.1\linewidth, height=1.1\linewidth, keepaspectratio]{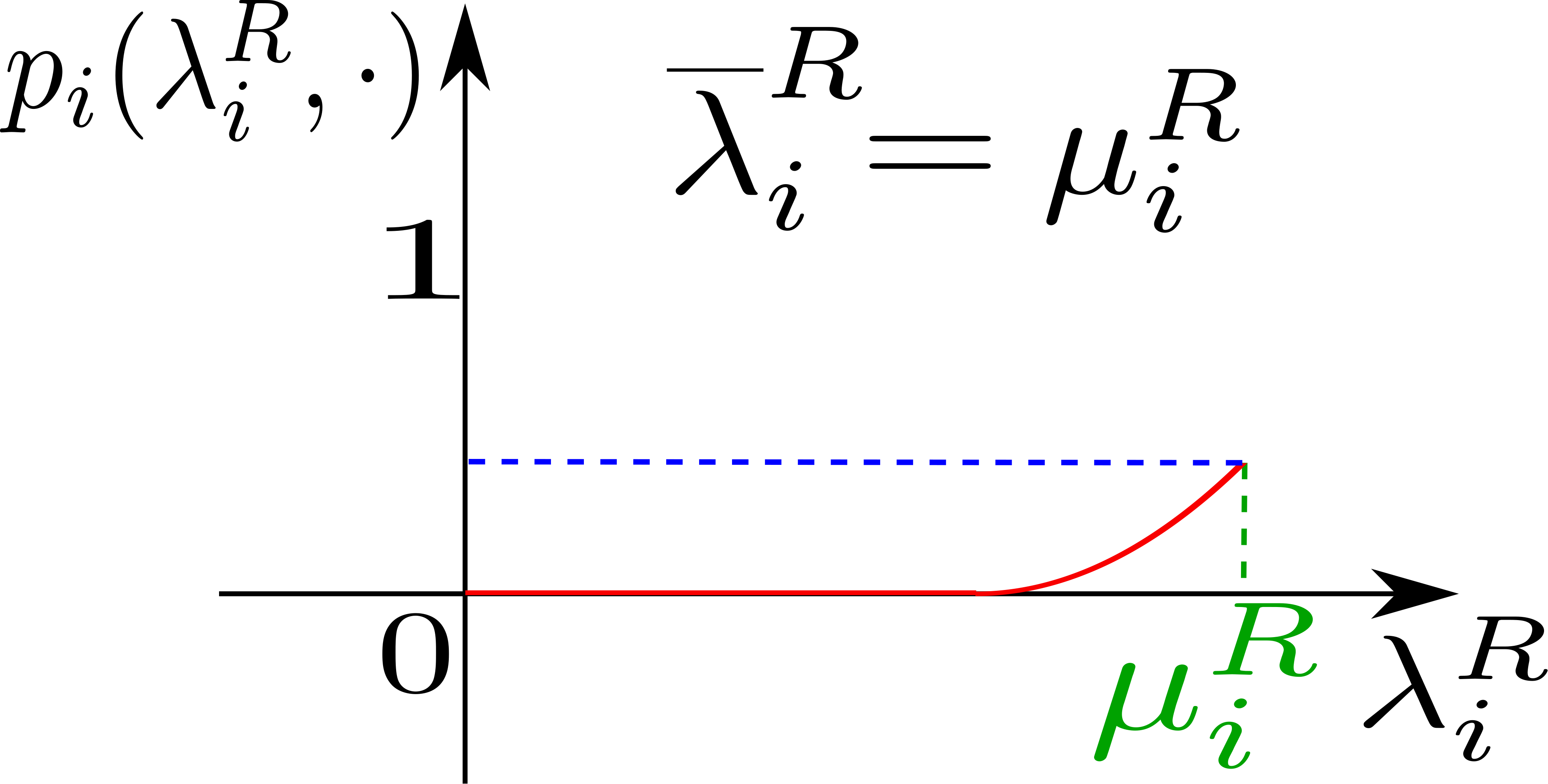}
        \caption{}
        \label{c}
\end{subfigure}
    \caption{\footnotesize Constraint probability of player $i$ as $\overline{\lambda}_i^R$ varies from $0$ to $\mu_i^R$. a) For $\overline{\lambda}_i^R=0$, $p_i(\lambda_i^R, \cdot)=1, \ \forall \lambda_i^R \in S_i$, 
    b) for $\overline{\lambda}_i^R \in (0, \ \mu_i^R)$, \ $ p_i(\lambda_i^R, \cdot)$ is convex for $\lambda_i^R \in [0, \ \overline{\lambda}_i^R)$, with $ p_i(\lambda_i^R, \cdot) \mapsto 1$ as $\lambda_i^R \mapsto \overline{\lambda}_i^R$, and $p_i(\lambda_i^R, \cdot)=1, \ \forall \lambda_i^R \in [\overline{\lambda}_i^R,\ \mu_i^R]$, and c) for $\overline{\lambda}_i^R=\mu_i^R$, $p_i(\lambda_i^R, \cdot)$ is convex in $\lambda_i^R$ and $ p_i(\lambda_i^R, \cdot) < 1, \ \forall \lambda_i^R \in S_i$.}
    \label{fig:Picture2}
\end{figure}

We study the properties of game $\Gamma$ under following assumptions. Recall that $x = \lambda_T^S-\lambda_T^R=\mu_T^S-\sum_{i=1}^{N}{a_i}\lambda_i^R$.

\begin{itemize}
    \item [(A1)] For a given $\lambda^R_{-i} \in S_{-i},\ i \in \mc N$, we assume that the rate of return $r^R(\lambda_i^R, \cdot)$ for reviewing the tasks is continuously differentiable, strictly increasing and strictly concave for $\lambda_i^R \in S_i$, with $r^R(0,0)=0$. Equivalently, $x \mapsto r^R(x)$ is continuously differentiable, strictly decreasing and strictly concave for $x \in [0, \ \mu_T^S] $, with $r^R(\mu_T^S)=0$.
    \item [(A2)] For a given $\lambda^R_{-i} \in S_{-i},\ i \in \mc N$, we assume that the constraint probability $p(\lambda^R_i, \cdot)$ is (i) continuous on $S_i$; (ii) is 
    continuously differentiable,  non-decreasing and convex  for $ \lambda^R_i \in (0, \ \overline{\lambda}_i^R) \subset S_i$; and (iii)
     is equal to $1$, for $\lambda^R_i \in (\overline{\lambda}_i^R, \ \mu_i^R]$. See Fig.~\ref{fig:Picture2} for an illustration. Equivalently, $p(x)$ is continuously differentiable, non-increasing and convex for $x \in (0, \ \mu_T^S]$, and $p(x) \rightarrow 1$, as $x \rightarrow 0$. Furthermore, $p=1,$ for every $x < 0$.
     
    \item [(A3)] We assume $f_i(\mu_i^R, 0)= r^R(\mu_i^R,0)(1-p(\mu_i^R,0)) - h_ir^S >0$, for each $i\in \mc{N}$, i.e., if no other player reviews any task, then each player $i$ has a positive incentive to review tasks with maximum admission rate $\mu_i^R$. 
\end{itemize}

\begin{remark}
 The rate of return $r^R$  and the constraint probability $p$ can be easily designed to accommodate (A1-A3).
Under Assumptions (A1) and (A2), the incentive function $f_i(\lambda_i^R,\cdot)$ is strictly concave in $\lambda_i^R$, which means for a fixed $\lambda_{-i}^R$, the player $i$ has diminishing marginal incentive to review tasks. 
We make Assumption (A3) to provide positive incentives for players to review tasks with their maximum review admission rate $\mu_i^R$, if no other player chooses to review any task. 
We can design game $\Gamma$ to satisfy Assumption (A3) by ensuring that the following conditions hold:
\begin{enumerate}
    \item  $r^R(\mu_i^R,0) > r^S$, and ${\mu_i^S}\leq \mu_i^R$, for each $i\in \mc{N}$, and 
    \item $\mu_i^R \ll {\mu_T^S}/{a_i}$, {or equivalently $\sum_{j \in \mc{N},\ j \neq i} \mu_j^S \gg \mu_i^R$,} for each $i\in \mc{N}$.
\end{enumerate}

\medskip
{If the latter condition holds, then $x$ is large, and consequently, the constraint probability $p(\mu_i^R,0) \approx 0$, for each $i\in \mc{N}$.} For most practical purposes, servicing a task requires more time than reviewing it, i.e., $\mu_i^S \leq \mu_i^R$. Therefore, condition (i) can be easily satisfied by designing rewards such that $r^R(\mu_i^R,0)> r^S$, for each $i \in \mc{N}$.
If the total service admission rate of all the players except player $i$
is much higher than the maximum review admission rate of player $i$, i.e. $\sum_{j \in \mc{N},\ j \neq i} \mu_j^S \gg \mu_i^R$, for each $i \in \mc{N}$, then condition (ii) holds. Notice that for a large team of agents where a single agent does not have much impact on the overall service rate, condition (ii) is true. We refer the reader to Section~\ref{Numerical Illustrations} for an example. \oprocend
\end{remark}



\begin{theorem}[\bit{Existence of PNE}]\label{thm:thm1}
The CPR game $\Gamma$, under Assumptions (A1-A3), admits a PNE.
\end{theorem}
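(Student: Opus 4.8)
The plan is to invoke a topological fixed-point theorem on the best-response map, so the work reduces to showing that each player's best response is a well-defined, single-valued, continuous function of the opponents' strategies. Since each strategy set $S_i=[0,\mu_i^R]$ is a nonempty compact convex interval, $S=\prod_{i\in\mc{N}}S_i$ is compact and convex; once each $b_i$ is shown to be a continuous single-valued map, Brouwer's theorem supplies a fixed point $\lambda^{R*}$, which by the characterization following Definition~\ref{definition1} is exactly a PNE.

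First I would shrink the effective domain of each player's maximization. Fix $\lambda_{-i}^R$ and note from the definition of $\Lambda_i$ that $x=a_i(\Lambda_i-\lambda_i^R)$, so $\overline\lambda_i^R$ is the (clamped) value $\Lambda_i$ at which $x=0$. For $\lambda_i^R>\overline\lambda_i^R$ we have $x<0$, whence (A2) gives $p=1$ and $f_i(x)=-h_ir^S<0$; thus $\tilde u_i=\mu_i^Sr^S+\lambda_i^Rf_i(x)<\mu_i^Sr^S=\tilde u_i(0,\lambda_{-i}^R)$. Hence no best response exceeds $\overline\lambda_i^R$, and it suffices to maximize $G(\lambda_i^R):=\lambda_i^Rf_i(x)$ over $[0,\overline\lambda_i^R]$, where $x\ge 0$.

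The core step is uniqueness of this maximizer. Writing $\phi(x):=r^R(x)(1-p(x))$ so that $f_i(x)=\phi(x)-h_ir^S$, the preceding Remark gives that $f_i$, hence $\phi$, is strictly concave, while $\phi(0)=\phi(\mu_T^S)=0$ and $\phi>0$ on $(0,\mu_T^S)$; thus $\phi$ is single-peaked with a unique $x^\dagger$, strictly increasing on $(0,x^\dagger)$ and strictly decreasing afterwards. Differentiating $G$ twice yields $G''(\lambda_i^R)=-2a_i\phi'(x)+a_i^2\lambda_i^R\phi''(x)$, which is strictly negative on the region $x\le x^\dagger$ (where $\phi'\ge0$ and $\phi''<0$). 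To localize the maximizer there I would use the first-order condition: at any interior critical point $G'(\lambda_i^R)=f_i(x)-a_i\lambda_i^R\phi'(x)=0$, so $f_i(x)$ and $\phi'(x)$ share the same sign; consequently any critical point with $x>x^\dagger$ has $\phi'(x)<0$, hence $f_i(x)<0$ and $G<0=G(0)$, so it cannot be a maximizer. Therefore the global maximum is either $\lambda_i^R=0$ or the unique critical point lying in the strictly concave region $x\le x^\dagger$, and $b_i$ is single-valued. Continuity of $b_i$ in $\lambda_{-i}^R$ (through $\Lambda_i$) then follows from Berge's maximum theorem, using joint continuity of $\tilde u_i$---including at $x=0$, where (A2) forces $p\to1$ and keeps $f_i$ continuous---on the compact domain.

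The main obstacle is exactly this localization: because $\tilde u_i$ is the product of the increasing linear factor $\lambda_i^R$ and the merely concave factor $f_i$, it is not globally concave (nor obviously quasi-concave), as $\phi'$ changes sign at $x^\dagger$. The first-order-condition sign argument is what rescues single-valuedness, confining every positive-utility maximizer to the subregion where strict concavity is restored; additional care is needed for the corner cases $\overline\lambda_i^R\in\{0,\mu_i^R\}$ and the kink of $p$ at $x=0$. Assumption (A3) is not needed for bare existence but guarantees nondegeneracy: since $f_i(\mu_i^R,0)>0$, the all-zero profile is not a fixed point, so at least one player reviews at the resulting PNE.
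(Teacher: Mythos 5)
Your proposal is correct in its essentials and takes the same route as the paper: establish that each best response $b_i$ is single-valued (by confining any positive-utility maximizer to the region where $f_i>0$ and $f_i$ is decreasing in $\lambda_i^R$, on which $\tilde u_i$ is strictly concave --- precisely the content of Lemma~\ref{lemma:incentive_function} and Lemma~\ref{lemma:best_response_mapping}) and continuous via Berge's maximum theorem (Lemma~\ref{lemma:best_response_continuous}), then apply Brouwer's fixed-point theorem to the product map on the compact convex set $S$, exactly as in Appendix~\ref{Existence_appendix}.

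One caveat concerns your closing remark that (A3) is dispensable for existence. Under (A1)--(A2) alone, it can happen that $\max_x r^R(x)(1-p(x)) = h_i r^S$ exactly, with the peak attained at an $x^\dagger$ reachable from some $\lambda_{-i}^R$; then $f_i\le 0$ on all of $S_i$ with $f_i=0$ at an interior critical point $\lambda_i^{R\dagger}$, so the set of maximizers of $\tilde u_i$ is the two-point, non-convex set $\{0,\lambda_i^{R\dagger}\}$, which defeats Brouwer (and Kakutani). Assumption (A3) forces $\max_x r^R(x)(1-p(x)) > h_i r^S$, which is exactly what excludes this tie --- the paper exploits this through Corollary~\ref{corollary1} --- so your single-valuedness step implicitly relies on (A3) after all; this does not affect the theorem as stated, since (A3) is among its hypotheses, but the claim that existence follows from (A1)--(A2) alone by this argument is not justified.
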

\begin{proof}
See Appendix~\ref{Existence_appendix} for the proof.
\end{proof}

Let $f'_i(\lambda_i^{R}, \lambda_{-i}^{R})$ be the first partial derivative of $f_i(\lambda_i^{R}, \lambda_{-i}^{R})$ with respect to $\lambda_i^R$. We now provide a corollary that characterizes a PNE of CPR game $\Gamma$.

\begin{corollary}[\bit{PNE}]\label{corollary2}
For the CPR game $\Gamma$, under Assumptions (A1-A3), the following statements hold for a PNE $\lambda^{R^*}= [\lambda_1^{R^*}, \ldots, \lambda_N^{R^*}]$ with $x^*=\mu_T^S-\sum_{i=1}^{N}a_i\lambda_i^{R^*}$:
\begin{enumerate}
    \item $f'_i(\lambda_i^{R^*}, \lambda_{-i}^{R^*}) <0$ (or $\frac{d f_i}{d x}(x^*)>0$) for every player;
    \item $\lambda_i^{R^*}=0$, if and only if, $f_i(\lambda_i^{R^*},\lambda_{-i}^{R^*})\le 0$ ; and
    \item $\lambda_i^{R^*}$ is non-zero and satisfies the following implicit equation if and only if $f_i(\lambda_i^{R^*},\lambda_{-i}^{R^*}) = f_i(x^*)>0$ at PNE, where
     \begin{equation}\label{eq:PNE_min}
       \lambda_i^{R^*}= \min \big\{ \tilde{\lambda}_i^{R^*}, \ \mu_i^R \big\} , 
    \end{equation}
   with $ \tilde{\lambda}_i^{R^*} = -\frac{f_i(\lambda_i^{R^*}, \lambda_{-i}^{R^*})}{f'_i(\lambda_i^{R^*},\lambda_{-i}^{R^*})}= \frac{f_i(x^*)}{a_i\frac{d f_i}{d x}(x^*)}$.
\end{enumerate}
\end{corollary}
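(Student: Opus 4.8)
The plan is to strip each player's problem down to maximizing $\psi_i(\lambda_i^R):=\lambda_i^R f_i(x)$ over $S_i$ (the constant $\mu_i^S r^S$ in $\tilde u_i$ plays no role), and to exploit the single‑peaked shape of the common profit term $g(x):=r^R(x)(1-p(x))$, so that $f_i(x)=g(x)-h_i r^S$. First I would record the elementary identities: since $\partial x/\partial\lambda_i^R=-a_i$, we have $f_i'(\lambda_i^R,\lambda_{-i}^R)=-a_i\frac{df_i}{dx}=-a_i\frac{dg}{dx}(x)$, so the two forms of statement~(i) are equivalent, and $\psi_i'(\lambda_i^R)=f_i(x)-a_i\lambda_i^R\frac{dg}{dx}(x)$. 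By Assumptions~(A1)--(A2) (and the Remark) $g$ is strictly concave on $[0,\mu_T^S]$ with $g(0)=g(\mu_T^S)=0$ and $g\ge 0$ there; hence $g$ has a unique interior maximizer $x_m\in(0,\mu_T^S)$ with $\frac{dg}{dx}>0$ on $[0,x_m)$ and $\frac{dg}{dx}<0$ on $(x_m,\mu_T^S]$. In this language statement~(i) is precisely the assertion $x^*<x_m$.

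The crux is statement~(i), which I would prove by contradiction: assume $x^*\ge x_m$. The decisive point is that (A3) holds for \emph{every} player, i.e.\ $g(\mu_T^S-a_i\mu_i^R)>h_i r^S$ for all $i\in\mc N$. Take any player $i$ with $\lambda_i^{R^*}=0$; then $C_i:=\mu_T^S-\sum_{j\ne i}a_j\lambda_j^{R^*}=x^*$, so by choosing $\lambda_i^R\in[0,\mu_i^R]$ she can realize any slackness in $[\max(0,x^*-a_i\mu_i^R),\,x^*]$. Because $x_m\le x^*\le\mu_T^S$, monotonicity of $g$ past its peak furnishes a reachable point $x$ with $g(x)\ge g(\mu_T^S-a_i\mu_i^R)>h_i r^S$ (take $x=x_m$ if it lies in the interval, otherwise the left endpoint), whence $f_i(x)>0$ and $\psi_i>0=\psi_i(0)$ --- a profitable deviation. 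Thus no player is at rest, so every player is active. But then either all players sit at $\mu_i^R$, in which case $\sum_i a_i\mu_i^R=\sum_i(\mu_i^S+\mu_i^R)=\mu_T^S+\mu_T^R$ forces $x^*=-\mu_T^R<0$, contradicting $x^*\ge x_m>0$; or some active player $k$ is interior, and her first‑order condition $f_k(x^*)=a_k\lambda_k^{R^*}\frac{dg}{dx}(x^*)$ together with $\frac{dg}{dx}(x^*)\le 0$ gives $f_k(x^*)\le 0$, hence $\psi_k(\lambda_k^{R^*})\le 0=\psi_k(0)$, contradicting optimality of a strictly positive $\lambda_k^{R^*}$ (the knife‑edge $\frac{dg}{dx}(x^*)=0$ is excluded because (A3) yields $f_k(x_m)>0$). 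Either way we reach a contradiction, so $x^*<x_m$ and $\frac{dg}{dx}(x^*)>0$. I expect this to be the main obstacle: the subtlety is that a capacity‑saturated player can appear locally optimal even when $x^*>x_m$, so (A3) must be invoked as a \emph{global} deviation for the players at rest rather than through a marginal condition.

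With $\frac{dg}{dx}(x^*)>0$ established, statements~(ii) and~(iii) follow from first‑order analysis of $\psi_i$ at the equilibrium point (where $x=x^*$). For~(ii): if $\lambda_i^{R^*}=0$ then $\psi_i'(0)=f_i(x^*)\le 0$ is necessary, giving $f_i(x^*)\le 0$. Conversely, if $f_i(x^*)\le 0$ and yet $\lambda_i^{R^*}>0$, then $\psi_i(\lambda_i^{R^*})=\lambda_i^{R^*}f_i(x^*)\le 0=\psi_i(0)$, so optimality forces $f_i(x^*)=0$; but then $\psi_i'(\lambda_i^{R^*})=-a_i\lambda_i^{R^*}\frac{dg}{dx}(x^*)<0$, so she strictly improves by reviewing slightly less, a contradiction --- hence $\lambda_i^{R^*}=0$.

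Statement~(iii) is the contrapositive of~(ii) (so $\lambda_i^{R^*}\ne 0\iff f_i(x^*)>0$) together with the value of the optimizer. When $f_i(x^*)>0$ and the optimum is interior, $\psi_i'(\lambda_i^{R^*})=0$ gives $\lambda_i^{R^*}=\tilde\lambda_i^{R^*}=f_i(x^*)/\big(a_i\frac{dg}{dx}(x^*)\big)$ with $\tilde\lambda_i^{R^*}<\mu_i^R$; when the optimum is at the cap $\mu_i^R$, the boundary condition $\psi_i'(\mu_i^R)\ge 0$ reads $f_i(x^*)\ge a_i\mu_i^R\frac{dg}{dx}(x^*)$, i.e.\ $\tilde\lambda_i^{R^*}\ge\mu_i^R$ after dividing by the positive quantity $a_i\frac{dg}{dx}(x^*)$. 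Combining the two cases yields $\lambda_i^{R^*}=\min\{\tilde\lambda_i^{R^*},\,\mu_i^R\}$, as claimed; the local concavity $\psi_i''=2f_i'+\lambda_i^R f_i''<0$ (valid where $\frac{dg}{dx}>0$, since then $f_i'<0$ and $f_i''<0$) confirms that these stationary points are maxima.
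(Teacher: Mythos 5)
Your proposal is correct, and it follows the paper's overall skeleton---establish statement (i) by contradiction, then read (ii) and (iii) off first-order and boundary conditions---but your execution of the contradiction is genuinely different and, in fact, more complete than the paper's. The paper obtains (ii)--(iii) by citing its best-response characterization (Corollary~\ref{corollary1}, proved inside Lemma~\ref{lemma:best_response_mapping}), and proves (i) by noting that the sign of $f_i'$ is common to all players and asserting that, if $f_i'\ge 0$ at a PNE, every player with $\lambda_i^{R^*}>0$ could profitably \emph{increase} her rate; that argument says nothing about a profile in which every active player is saturated at $\mu_i^R$ (such a player cannot increase), nor about the all-zero profile (then there is no active player to deviate). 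Your decomposition into resting, capped, and interior players closes exactly these holes: resting players get a \emph{global} profitable deviation from (A3) and the single-peaked shape of $g$; an all-capped profile forces $x^*=\mu_T^S-\sum_i a_i\mu_i^R=-\mu_T^R<0$; and an interior player's FOC gives $f_k(x^*)\le 0$, contradicting optimality, with the knife-edge $\frac{dg}{dx}(x^*)=0$ excluded by (A3). Your argument is also self-contained, re-deriving (ii)--(iii) directly from $\psi_i$ rather than through Corollary~\ref{corollary1}. One small patch is needed: in the resting-player step, if $x^*=x_m$ exactly, then ``take $x=x_m$'' corresponds to $\lambda_i^R=0$, which is not a deviation and yields $\psi_i=0$ rather than $\psi_i>0$; but in that case $f_i(x^*)=g(x_m)-h_ir^S>0$ by (A3), so $\psi_i'(0)>0$ and any sufficiently small $\lambda_i^R>0$ supplies the required profitable deviation.
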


\begin{proof}
See Appendix~\ref{Corollary_appendix} for the proof.
\end{proof}

\begin{proposition}[\bit{Structure of PNE}]\label{proposition:Nash_Equilibrium}
For the CPR game $\Gamma$ with players ordered in increasing order of $h_i$,  
let $\lambda^{R^*}= [\lambda_1^{R^*},\lambda_2^{R^*}, \ldots, \lambda_N^{R^*}]$ be a PNE. Then, the following statements hold:
\begin{enumerate}
    \item If, for any player $k_1$, $\lambda_{k_1}^{R^*} < \mu_{k_1}^R$, then $a_{k_1}\lambda_{k_1}^{R^*} \ge a_{k_2}\lambda_{k_2}^{R^*}$ and $\lambda_{k_1}^{R^*} \ge \lambda_{k_2}^{R^*}$, for each $k_2 > k_1$; and
    
    \item {if $\lambda_l^{R^*}=0$, for any $l \in \mc{N}$, then $\lambda_i^R =0$, for each $i \in \{ j \in \mc{N} \ | \ j \ge l \}.$}
\end{enumerate}
\end{proposition}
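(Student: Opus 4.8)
The plan is to exploit the fact that, although the incentive functions $f_i$ differ across players, they differ only by an additive constant. Writing $f_i(x) = g(x) - h_i r^S$ with $g(x) := r^R(x)(1-p(x))$ common to all players, the $x$-derivative $\frac{df_i}{dx}(x) = g'(x)$ is \emph{identical} for every player, since the player-specific term $-h_i r^S$ is constant in $x$. Evaluated at the PNE slackness $x^*$, Corollary~\ref{corollary2}(i) then gives a single positive slope $D := \frac{df_i}{dx}(x^*) = g'(x^*) > 0$, independent of $i$. Moreover, because the players are ordered with $h_1 \le \cdots \le h_N$, the quantity $f_i(x^*) = g(x^*) - h_i r^S$ is non-increasing in $i$. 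These two monotonicity facts, together with the fixed-point characterization of Corollary~\ref{corollary2}, are the engine of the whole argument.

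First I would dispatch the second assertion, which is the easier one and follows immediately from Corollary~\ref{corollary2}(ii). If $\lambda_l^{R^*}=0$, then $f_l(x^*)\le 0$; for any $i \ge l$ we have $h_i \ge h_l$, so $f_i(x^*) = g(x^*) - h_i r^S \le g(x^*) - h_l r^S = f_l(x^*) \le 0$, and Corollary~\ref{corollary2}(ii) forces $\lambda_i^{R^*}=0$. This closes the second assertion for every $i$ in $\{j \in \mc{N} \mid j \ge l\}$.

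For the first assertion, fix $k_2 > k_1$ under the hypothesis $\lambda_{k_1}^{R^*} < \mu_{k_1}^R$. If $\lambda_{k_2}^{R^*}=0$ both claimed inequalities are trivial, so assume $\lambda_{k_2}^{R^*}>0$. Then Corollary~\ref{corollary2}(iii) gives $f_{k_2}(x^*)>0$, and by the monotonicity above $f_{k_1}(x^*) \ge f_{k_2}(x^*) > 0$, so player $k_1$ is also active with $\lambda_{k_1}^{R^*} = \min\{\tilde\lambda_{k_1}^{R^*}, \mu_{k_1}^R\}$. The strict hypothesis $\lambda_{k_1}^{R^*} < \mu_{k_1}^R$ forces the minimum to be attained at the interior value, i.e. $\lambda_{k_1}^{R^*} = \tilde\lambda_{k_1}^{R^*} = f_{k_1}(x^*)/(a_{k_1} D)$, so that $a_{k_1}\lambda_{k_1}^{R^*} = f_{k_1}(x^*)/D$. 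On the other hand, $\lambda_{k_2}^{R^*} \le \tilde\lambda_{k_2}^{R^*}$ gives $a_{k_2}\lambda_{k_2}^{R^*} \le f_{k_2}(x^*)/D$. Since $D>0$ and $f_{k_1}(x^*) \ge f_{k_2}(x^*)$, the first inequality $a_{k_1}\lambda_{k_1}^{R^*} \ge a_{k_2}\lambda_{k_2}^{R^*}$ follows. Dividing by $a_{k_2}>0$ and using $a_{k_1} \le a_{k_2}$ (equivalently $h_{k_1} \le h_{k_2}$) yields $\lambda_{k_2}^{R^*} \le (a_{k_1}/a_{k_2})\lambda_{k_1}^{R^*} \le \lambda_{k_1}^{R^*}$, which is the second inequality.

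The main obstacle is really the single conceptual step: recognizing that the $x$-derivative of every $f_i$ collapses to the common $g'(x^*) = D$, which is what lets the per-player optimality conditions of Corollary~\ref{corollary2} be compared on an equal footing; without it, the $\tilde\lambda_i^{R^*}$ would carry player-dependent slopes and no clean ordering would emerge. The only remaining care is boundary bookkeeping, namely using the strict hypothesis $\lambda_{k_1}^{R^*} < \mu_{k_1}^R$ to replace the $\min$ by its interior expression for $k_1$ while retaining only the upper bound $\lambda_{k_2}^{R^*} \le \tilde\lambda_{k_2}^{R^*}$ for $k_2$. Everything else reduces to monotonicity of $f_i(x^*)$ in $h_i$ and positivity of $a_i$ and $D$.
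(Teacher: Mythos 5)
Your proof is correct and follows essentially the same route as the paper's: both arguments rest on the PNE characterization in Corollary~\ref{corollary2} together with the observation that the incentives $f_i$ differ only by the additive constants $h_i r^S$ (so that $\tfrac{df_i}{dx}$ is common to all players), and both reduce statement (i) to the inequality chain $a_{k_2}\lambda_{k_2}^{R^*}\le f_{k_2}(x^*)/\tfrac{df}{dx}(x^*)\le f_{k_1}(x^*)/\tfrac{df}{dx}(x^*)=a_{k_1}\lambda_{k_1}^{R^*}$ and statement (ii) to Corollary~\ref{corollary2}(ii). The only cosmetic difference is that the paper packages this chain as a proof by contradiction (assuming $a_{k_1}\lambda_{k_1}^{R^*}<a_{k_2}\lambda_{k_2}^{R^*}$) whereas you argue directly, which if anything is slightly cleaner.
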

\begin{proof}
See Appendix~\ref{structure_appendix} for the proof.
\end{proof}


It follows from Proposition~\ref{proposition:Nash_Equilibrium} that 
the review admission rate of a player $i$ at a PNE is monotonically decreasing with the ratio $h_i$. Therefore, at a PNE, as the heterogeneity in terms of $h_i$ among the players becomes very large, players with small (respectively, large) $h_i$ review  tasks  with  high (respectively, zero) review admission rate. { We will show in Lemma~\ref{lemma:social welfare solution} that the PNE shares these characteristics with the social welfare solution, which we define in Section~\ref{Social}. We illustrate this further in Section~\ref{Numerical Illustrations}. }
\medskip

\begin{theorem}[\bit{Uniqueness of PNE}]\label{thm:thm2}
The PNE admitted by the CPR game $\Gamma$, under assumptions (A1-A3), is unique.
\end{theorem}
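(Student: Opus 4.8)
The plan is to reduce the uniqueness of the PNE to the uniqueness of the scalar equilibrium slackness $x^*$, and then settle the latter by a monotonicity argument. The first step exploits the fact that, because $r^R$ and $p$ enter every player's incentive only through $x$, the common slope $\frac{df_i}{dx}$ is in fact independent of $i$. Writing $\phi(x):=r^R(x)(1-p(x))$, we have $f_i(x)=\phi(x)-h_i r^S$ and $\frac{df_i}{dx}(x)=\phi'(x)$ for every player, and Assumptions (A1)--(A2) (which make each $f_i(\lambda_i^R,\cdot)$ strictly concave in $\lambda_i^R$, hence $\phi$ strictly concave in $x$) give $\phi''<0$. By Corollary~\ref{corollary2}, every PNE is then determined entirely by its slackness value through
\[
\lambda_i^{R^*} = \begin{cases} 0, & f_i(x^*)\le 0,\\[2pt] \min\!\big\{\tfrac{f_i(x^*)}{a_i\phi'(x^*)},\,\mu_i^R\big\}, & f_i(x^*)>0.\end{cases}
\]
Consequently, two PNE sharing the same $x^*$ coincide, and it suffices to show that the game admits a single admissible $x^*$.

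Second, I would write the consistency equation that any equilibrium slackness must satisfy. Letting $\lambda_i^R(x)$ denote the right-hand side above at a generic $x$ and setting $\psi(x):=\sum_{i=1}^N a_i\lambda_i^R(x)$, the identity $x^*=\mu_T^S-\sum_i a_i\lambda_i^{R^*}$ reads $H(x^*)=0$ with $H(x):=x+\psi(x)-\mu_T^S$. By Corollary~\ref{corollary2}(i) every PNE has $\phi'(x^*)>0$; since $\phi'$ is strictly decreasing, the set $\{x:\phi'(x)>0\}$ is an interval $[0,x_0)$, and using $\phi(0)=\phi(\mu_T^S)=0$ together with $\phi>0$ on the interior one gets $x_0\in(0,\mu_T^S)$. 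I would restrict the analysis to $[0,x_0)$: it contains all genuine equilibrium slackness values, and it discards the spurious fixed point $x=\mu_T^S$ (the all-idle profile, where $\phi'<0$), which (A3) already rules out as a PNE.

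Third --- and this is the crux --- I would prove that $\psi$ is non-decreasing on $[0,x_0)$, whence $H$ is strictly increasing there and has at most one zero. Each summand $a_i\lambda_i^R(x)$ is continuous and piecewise equal to $0$ (idle regime $f_i(x)\le0$), to $a_i\mu_i^R$ (saturated regime), or to $\frac{f_i(x)}{\phi'(x)}$ (interior-active regime). On the active regime,
\[
\frac{d}{dx}\frac{f_i(x)}{\phi'(x)} = 1 - \frac{f_i(x)\,\phi''(x)}{(\phi'(x))^2} > 1 > 0,
\]
since $f_i(x)>0$, $\phi'(x)>0$ on our domain, and $\phi''(x)<0$. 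As $x$ grows on $[0,x_0)$, $\phi$ increases, so each player passes monotonically from idle to active to (possibly) saturated with the pieces matching continuously; hence every $a_i\lambda_i^R(\cdot)$, and therefore $\psi$, is non-decreasing. Thus $H$ is strictly increasing on $[0,x_0)$, has at most one root, and by Theorem~\ref{thm:thm1} exactly one. This unique $x^*$ produces, via the displayed formula, a unique PNE.

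The main obstacle is this last step: establishing global monotonicity of the aggregate $\psi$ despite each $\lambda_i^R(x)$ being only piecewise-smooth. The care required is in verifying continuity at the idle/active/saturated boundaries and that the regimes are traversed in the correct order as $x$ increases; the clean positive sign of the active-regime derivative rests on the strict concavity $\phi''<0$ combined with the equilibrium condition $\phi'>0$ from Corollary~\ref{corollary2}(i), which is precisely what justifies confining the argument to $[0,x_0)$ and excluding the spurious all-idle fixed point at $x=\mu_T^S$.
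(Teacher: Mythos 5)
Your proof is correct, and it takes a genuinely different route from the paper's, although the two share the same analytic core. The paper argues by directly comparing two hypothetical PNEs: it first shows (Steps 1--5, an intricate contradiction argument built on Proposition~\ref{proposition:Nash_Equilibrium} and Corollary~\ref{corollary1}) that any two PNEs must have the same \emph{support}, i.e., the same set of players with nonzero review rates, and only then (Step 6) shows the slackness parameter is unique, using exactly the computation you perform: for $f_i>0$ the map $x \mapsto \frac{f_i(x)}{a_i\frac{df_i}{dx}(x)}$ has positive derivative $\frac{1}{a_i}\bigl(1-\frac{f_i\,d^2f_i/dx^2}{(df_i/dx)^2}\bigr)>0$ by strict concavity. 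Your reorganization eliminates the support-comparison stage entirely: by folding the idle regime ($\lambda_i^R=0$ when $f_i(x)\le 0$) and the saturated regime ($\lambda_i^R=\mu_i^R$) into a single piecewise response map $x\mapsto \lambda_i^R(x)$ and proving each piece is non-decreasing with continuous matching at the regime boundaries, you get global monotonicity of the aggregate $\psi$ and hence strict monotonicity of $H(x)=x+\psi(x)-\mu_T^S$, so the scalar equilibrium condition has at most one root on the interval where $\frac{df_i}{dx}>0$ (which Corollary~\ref{corollary2}(i) shows contains all equilibrium slackness values). Players entering or leaving the support as $x$ varies is thus handled automatically by the piecewise structure rather than by a separate contradiction argument. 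What each approach buys: yours is shorter, more unified, and makes transparent that uniqueness is really a one-dimensional monotonicity statement; the paper's longer route yields the support-ordering comparison between candidate equilibria as intermediate structural information. Two small points worth making explicit in a polished write-up: (i) equilibrium slackness satisfies $x^*>0$ (if $x^*\le 0$ then $p\to 1$ forces all $f_i<0$, hence all players idle and $x^*=\mu_T^S$, a contradiction), which is what licenses restricting $H$ to $[0,x_0)$; and (ii) the active-to-saturated transition is monotone precisely because the same derivative bound shows the ratio $f_i/(a_i\phi')$ is increasing, so saturation, once reached, persists.
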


\begin{proof}
See Appendix~\ref{Uniqueness_appendix} for the proof.
\end{proof}

\section{Convergence to the Nash Equilibrium} \label{Convergence}

{We now show that the proposed CPR game $\Gamma$ under Assumptions (A1-A3) belong to the class of \textit{Quasi Aggregative games}~\cite{jensen2010aggregative} as defined below.}

\medskip
 \begin{definition}[\bit{Quasi Aggregative game}] \label{definition3}
 {Consider a set of players $\mc{N}$, where each player $i \in \mc{N}$ has a strategy set $S_i$, and a utility function $u_i$. Let $S=\prod_{i \in \mc{N}}S_i$ be the joint strategy space of all the players, and $S_{-i}=\prod_{j \in \mc{N},j \neq i}S_j$ be the joint strategy space of all the players except player $i$. A game $\Gamma=(\mc{N},\{S_i\}_{i \in \mc{N}},\{u_i\}_{i \in \mc{N}})$ is a quasi-aggregative game with aggregator $g: S \mapsto \mathbb{R}$, if there exists continuous functions $F_i: \mathbb{R}\times S_i \mapsto \mathbb{R}$ (the shift functions) and $\sigma_i: S_{-i} \mapsto X_{-i} \subseteq \mathbb{R}, \ i \in \mc{N}$ (the interaction functions) such that the utility functions $u_i$ for each player $i \in \mc{N}$ can be written as:}
 \begin{equation}{\label{quasi}}
     u_i(s)=\tilde u_i(\sigma_i(s_{-i}),s_i),
 \end{equation}
 where $\tilde u_i:X_{-i}\times S_i \mapsto \mathbb{R}$, and
 \begin{equation}
     g(s)=F_i(\sigma_i(s_{-i}),s_i), \ \text{for all} \ s \in S \ \text{and} \ i \in \mc{N}.
\end{equation}
 An alternative, but less general way of defining a quasi-aggregative game replaces~\eqref{quasi} in the definition with:
 \begin{equation}
      u_i(s)=\overline{u}_i(g(s),s_i),
 \end{equation}
 where $\overline{u}_i:X\times S_i \mapsto \mathbb{R}$, and $X= \{g(s) \ | s \in S \}\subseteq \mathbb{R}$.\\
 \end{definition}
 For the CPR game $\Gamma$, let $\sigma_i(\lambda_{-i}^R)=\sum_{j=1, j \neq i }^{N}{a_j}\lambda_j^R$ 
 and $g(\lambda^R)=F_i(\sigma_i(\lambda_{-i}^R),\lambda_i^R)=\sum_{j=1, j \neq i }^{N}{a_j}\lambda_j^R + a_i\lambda_i^R$ be the interaction functions and shift functions, respectively.
 The expected utility $\tilde{u}_i$, which is defined in~\eqref{eq:7}, can be re-written in the form
 \begin{equation}
     \tilde{u}_i(\lambda_i^R,\lambda_{-i}^R)= \tilde{u}_i(\sigma_i(\lambda_{-i}^R),\lambda_i^R).
 \end{equation}
 Hence, the CPR game $\Gamma$ is a quasi-aggregative game.

\medskip
Specializing~\cite[Theorem 1]{jensen2010aggregative} to the CPR game $\Gamma$, we obtain that if the best response for all the players is non-increasing in the interaction function $\sigma_i(\lambda_{-i}^R) =\sum_{j=1, j \neq i }^{N}{a_j}\lambda_j^R$, the CPR game $\Gamma$ is a best response pseudo-potential game~\cite{schipper2004pseudo} as defined below. 

\begin{definition}[\bit{Best response (pseudo)-potential game}] \label{best_response_pseudo_ponential}
A game $\Gamma=(\mc{N},\{S_i\}_{i \in \mc{N}},\{\tilde u_i\}_{i \in \mc{N}})$ is a best response pseudo-potential game if there exists a continuous function $\phi : S \mapsto \mathbb{R}$ such that for every $i \in \mc{N}$, 
\[
b_i(\lambda_{-i}^R) \supseteq \argmax_{\lambda_i^R \in S_i} \; \phi(\lambda_i^R, \ \lambda_{-i}^R), 
\]
where $b_i(\lambda_{-i}^R)$ is the best response of player $i$ to the review admission of other players $\lambda_{-i}^R$. Furthermore, if 

\[
b_i(\lambda_{-i}^R) = \argmax_{\lambda_i^R \in S_i} \; \phi(\lambda_i^R, \ \lambda_{-i}^R), 
\]
then the game $\Gamma$ is a best response potential game.
\end{definition}

We now establish that the best response for each player is non-increasing in $\sigma_i$. 

\begin{lemma}[\bit{Non-increasing best response}]\label{lemma:best_response_non_increasing}
 For the CPR game $\Gamma$, under Assumptions (A1-A2), the best response mapping $b_i(\lambda_{-i}^R)$ is non-increasing in $\sigma_i(\lambda_{-i}^R)$, for each $i \in \mc{N}$, where $\sigma_i(\lambda_{-i}^R)=\sum_{j=1, j \neq i }^{N}{a_j}\lambda_j^R$.
\end{lemma}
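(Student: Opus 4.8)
The plan is to reduce the best-response problem to a smooth, single-peaked univariate maximization and then obtain monotonicity of its maximizer through a decreasing-differences (monotone comparative statics) argument. Throughout I treat $\sigma_i = \sum_{j \neq i} a_j \lambda_j^R$ as the parameter and write $x = \mu_T^S - \sigma_i - a_i \lambda_i^R$, so that $\partial x / \partial \lambda_i^R = -a_i$ and $\partial x / \partial \sigma_i = -1$. First I would dispose of the infeasible part of the strategy set: for $\lambda_i^R > \overline{\lambda}_i^R$ the constraint~\eqref{eq:3} is violated, so $p=1$, the review utility vanishes, and $\tilde u_i = \mu_i^S r^S - h_i r^S \lambda_i^R$ is strictly decreasing in $\lambda_i^R$. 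Hence $b_i(\lambda_{-i}^R) \in [0, \overline{\lambda}_i^R]$, where $x \geq 0$ and the review part of the objective equals $\lambda_i^R f_i(x)$ with $f_i$ as in~\eqref{eq:def-fi}. Under (A1-A2) the function $f_i$ is strictly concave in $x$ (as noted in the preceding remark), and $f_i(0) = -h_i r^S < 0$, $f_i(\mu_T^S) = -h_i r^S < 0$ (using $r^R(\mu_T^S)=0$ and $p \to 1$ as $x \to 0$). Thus $f_i$ is single-peaked: it attains its maximum at a unique $x_i^\dagger$ with $\frac{d f_i}{dx} \geq 0$ for $x \leq x_i^\dagger$ and $\frac{d f_i}{dx} \leq 0$ for $x \geq x_i^\dagger$.

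Next I would localize the maximizer to the increasing branch of $f_i$. Differentiating gives $\frac{\partial \tilde u_i}{\partial \lambda_i^R} = f_i(x) - a_i \lambda_i^R \frac{d f_i}{dx}$. On the part of $[0,\overline{\lambda}_i^R]$ where $x > x_i^\dagger$ (so $\frac{d f_i}{dx} < 0$) and $f_i(x) > 0$, both terms are positive, so $\tilde u_i$ is strictly increasing there; hence any maximizer with positive value lies on the branch $x \leq x_i^\dagger$, i.e. satisfies $\frac{d f_i}{dx}(x^\ast) \geq 0$, the only other possibility being the corner $\lambda_i^R = 0$. On the localized region $R = \{\lambda_i^R \in [0,\overline{\lambda}_i^R] : x(\lambda_i^R) \leq x_i^\dagger\}$ the objective is concave in $\lambda_i^R$, since
\[
\frac{\partial^2 \tilde u_i}{\partial (\lambda_i^R)^2} = -2 a_i \frac{d f_i}{dx} + a_i^2 \lambda_i^R \frac{d^2 f_i}{dx^2} \leq 0,
\]
using $\frac{d f_i}{dx} \geq 0$, $\frac{d^2 f_i}{dx^2} \leq 0$, and $\lambda_i^R \geq 0$. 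So on $R$ the best response is the unique solution of the first-order condition $f_i(x^\ast) = a_i \lambda_i^{R\ast}\frac{d f_i}{dx}(x^\ast)$, or a boundary point.

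Finally I would establish monotonicity by a comparative-statics argument in $C := \mu_T^S - \sigma_i$. The mixed partial derivative is
\[
\frac{\partial^2 \tilde u_i}{\partial \lambda_i^R \, \partial C} = \frac{d f_i}{dx} - a_i \lambda_i^R \frac{d^2 f_i}{dx^2} \geq 0 \quad \text{on } R,
\]
so the objective has increasing differences in $(\lambda_i^R, C)$ on $R$, while the interval $R$ ascends in $C$ (both of its endpoints are non-decreasing in $C$). By a standard Topkis monotonicity argument the maximizer over $R$ is non-decreasing in $C$, hence non-increasing in $\sigma_i$. It remains to stitch this with the corner $\lambda_i^R = 0$: the set $\{x : f_i(x) > 0\}$ is a fixed interval, and as $\sigma_i$ increases (equivalently $C$ decreases) the feasible window $[x(\overline{\lambda}_i^R), C]$ only shifts downward, so the best response can only pass from a positive value to $0$, never the reverse, which is consistent with monotone non-increase. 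Together these yield that $b_i(\lambda_{-i}^R)$ is non-increasing in $\sigma_i$. The main obstacle is precisely that $\tilde u_i$ is not globally concave in $\lambda_i^R$ (the sign of $\frac{d f_i}{dx}$ is indefinite on the decreasing branch of $f_i$), so concavity and the single-crossing property only hold after restricting to $x \leq x_i^\dagger$; the crux is therefore to prove the maximizer always lives on that branch (or at $0$) and to check that the transition to the corner solution does not break monotonicity.
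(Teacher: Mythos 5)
Your calculus is correct (the expressions for $\partial \tilde u_i/\partial \lambda_i^R$, for $\partial^2 \tilde u_i/\partial (\lambda_i^R)^2$, and for the mixed partial all check out), and your route is genuinely different from the paper's: you localize to the branch $\{x \le x_i^\dagger\}$ where the objective is concave and has increasing differences in $(\lambda_i^R, C)$, then invoke Topkis-style monotone comparative statics, whereas the paper argues case-by-case on the three possible forms of the best response given by Lemma~\ref{lemma:best_response_mapping} and Corollary~\ref{corollary1} ($b_i=0$, interior stationary point, $b_i=\mu_i^R$), obtaining the interior case by implicit differentiation of the first-order condition, $\frac{d b_i}{d \sigma_i}=\frac{-{f_i'}^2+f_i''f_i}{a_i{f_i'}^2} <0$. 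Your mixed-partial computation is essentially the same derivative information packaged as a single-crossing statement, so in the interior regime the two arguments buy the same thing.

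The genuine gap is in your localization claim. The assertion that ``any maximizer with positive value lies on the branch $x \le x_i^\dagger$, the only other possibility being the corner $\lambda_i^R = 0$'' is false when the cap $\mu_i^R$ binds before that branch is reached, i.e.\ when $x(\mu_i^R)=\mu_T^S-\sigma_i-a_i\mu_i^R > x_i^\dagger$: then your region $R$ is \emph{empty}, and the best response is the corner $\lambda_i^R=\mu_i^R$, sitting on the decreasing branch of $f_i$ with $f_i>0$ and $\frac{df_i}{dx}<0$. This is not an exotic configuration --- it is exactly Case 3 of the paper's proof (statement (iii) of Corollary~\ref{corollary1}, $f_i'\ge 0$ on all of $S_i$), and it is precisely the situation Assumption (A3) engineers when $\sigma_i\approx 0$, since a single player cannot close the slack by herself. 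In that regime your Topkis argument has nothing to act on, and your stitching paragraph treats only the corner at $0$, so as written you have not excluded an upward jump into $\mu_i^R$ as $\sigma_i$ grows. The gap is patchable: $x(\mu_i^R)$ is increasing in $C=\mu_T^S-\sigma_i$, so $R=\emptyset$ occurs only on an initial segment of $\sigma_i$-values, on which $b_i=\mu_i^R$ is the largest feasible point; once $R\neq\emptyset$ it stays non-empty, so the regimes are ordered and monotonicity survives the transition --- but this must be argued. A smaller caveat: your claim that the best response ``can only pass from a positive value to $0$'' tacitly assumes the feasible window of $x$ never lies entirely above the interval $\{f_i>0\}$, which is again an (A3)-type fact; the paper's own proof has the same hidden reliance through statement (i) of Corollary~\ref{corollary1}, so this is a shared, not a new, weakness.
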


\begin{proof}
See Appendix~\ref{non_increasing_appendix} for the proof.
\end{proof}

 Furthermore, Remark 1 in~\cite{dubey2006strategic} states that a best response pseudo-potential game with a unique best response, is an instance of best response potential game~\cite{voorneveld2000best}. Therefore, the CPR game $\Gamma$, with its unique (Lemma~\ref{lemma:best_response_mapping}) and non-increasing best response $b_i$ in $\sigma_i(\lambda_{-i}^R)$ (Lemma~\ref{lemma:best_response_non_increasing}), is a best response potential game. Hence, simple best response dynamics such as sequential best response dynamics \cite{dubey2006strategic} and simultaneous best response dynamics \cite{jensen2009stability} converge to the unique PNE.

\section{Social Welfare and Inefficiency of PNE}\label{Social}

In this section, we characterize the social welfare solution and provide analytic upper bounds on inefficiency measures for the PNE.
\subsection{Social Welfare}

Social welfare corresponds to the optimal (centralized) allocation by players with respect to a social welfare function. To characterize the effect of self-interested optimization of each agent, we compare the decentralized solution (PNE of the CPR game) with the centralized optimal solution (social welfare). 

We choose a typical social welfare function $\Psi(\lambda^R) : S \mapsto \mathbb{R}$ defined by the sum of expected utility of all players, i.e.,
\begin{align}
  \Psi &= \sum_{i=1}^{N}\tilde u_i = \sum_{i=1}^{N}[\mu_i^Sr^S +\lambda_i^Rf_i(x)]  \nonumber \\
        &=\mu_T^Sr^S + \lambda_T^Rr^R(x)(1-p(x)) - r^S\sum_{i=1}^{N}h_i\lambda_i^R \nonumber \\
      & = (\lambda_T^R+x) r^S + \lambda_T^Rr^R(x)(1-p\left(x)\right). \label{eq:8}
 \end{align}

A \textit{social welfare solution} is an optimal allocation that maximizes the social welfare function. 

\medskip
\begin{lemma}[\bit{Social welfare solution}]\label{lemma:social welfare solution}
For the CPR game $\Gamma$ with constraint $\sum_{i=1}^N a_i \lambda_i^R =c$ , for any given $c \in \mathbb{R}_{\ge 0}$, and players ordered in increasing order of $h_i$, the associated social welfare solution, $\lambda^R \in S$ is given by:
 \begin{equation*}
    \lambda^R= \left[\mu_1^R, \ \mu_2^R ,\ldots,  \ \mu_{k-1}^R, \ \frac{1}{a_k}(c -\sum_{i=1}^{k-1}a_i\mu_i^R),\  0, \ldots , \ 0 \right],
\end{equation*}
where $k$ is the smallest index such that $\sum_{i=1}^{k-1} a_i\mu_i^R \le c< \sum_{i=1}^{k} a_i\mu_i^R$.
Furthermore, since  $\sum_{i=1}^N a_i \lambda_i^R \in [0, \ \mu_T^S + \mu_T^R]$, a bisection algorithm can be employed to compute optimal $c$ and hence, the optimal social welfare solution.
\end{lemma}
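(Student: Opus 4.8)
The plan is to exploit the fact that fixing the aggregate $\sum_{i=1}^{N} a_i \lambda_i^R = c$ pins down the slackness parameter to the \emph{constant} $x = \mu_T^S - c$. Consequently $r^R(x)$ and $p(x)$ are fixed numbers on the constraint surface, and the social welfare expression \eqref{eq:8} collapses to an affine function of $\lambda_T^R = \sum_{i=1}^{N}\lambda_i^R$:
\[
\Psi = x\,r^S + \lambda_T^R\big[r^S + r^R(x)(1-p(x))\big].
\]
By (A1) we have $r^R(x) \ge 0$ on $[0,\mu_T^S]$ (since $r^R$ decreases to $r^R(\mu_T^S)=0$), by (A2) we have $1-p(x) \ge 0$, and $r^S > 0$; hence the bracketed coefficient is strictly positive. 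Therefore, over the feasible set $\{\lambda^R \in S : \sum_i a_i \lambda_i^R = c\}$, maximizing $\Psi$ is \emph{equivalent} to maximizing $\lambda_T^R$.

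This reduces the lemma to the linear program
\[
\max_{\lambda^R}\ \sum_{i=1}^{N}\lambda_i^R \quad \text{subject to} \quad \sum_{i=1}^{N} a_i \lambda_i^R = c,\ \ 0 \le \lambda_i^R \le \mu_i^R.
\]
The second key step is to solve this LP. Interpreting $c$ as a budget that each unit $a_i\lambda_i^R$ consumes, the objective gains $1/a_i$ per unit of budget spent on player $i$; since $a_i = 1 + h_i$ and the players are ordered so that $a_1 \le \cdots \le a_N$, player $1$ is the most budget-efficient, player $2$ next, and so on. I would establish optimality of the greedy (water-filling) allocation by an exchange argument: if in a feasible solution there exist indices $i < j$ with $\lambda_i^R < \mu_i^R$ and $\lambda_j^R > 0$, then decreasing $\lambda_j^R$ by $\delta/a_j$ and increasing $\lambda_i^R$ by $\delta/a_i$ for sufficiently small $\delta > 0$ keeps $\sum_i a_i\lambda_i^R$ fixed (and preserves the box constraints) while changing the objective by $\delta\big(\tfrac{1}{a_i}-\tfrac{1}{a_j}\big) \ge 0$. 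Thus mass can always be shifted toward lower-index players without decreasing $\sum_i \lambda_i^R$, so an optimal solution fills $\lambda_1^R, \lambda_2^R, \dots$ to capacity in order until the budget $c$ is exhausted. This yields exactly the claimed profile, with $\lambda_i^R = \mu_i^R$ for $i < k$, $\lambda_k^R = \tfrac{1}{a_k}\big(c - \sum_{i=1}^{k-1} a_i \mu_i^R\big)$, and $\lambda_i^R = 0$ for $i > k$, where $k$ is the smallest index with $\sum_{i=1}^{k-1} a_i \mu_i^R \le c < \sum_{i=1}^{k} a_i \mu_i^R$; by construction this allocation is feasible ($0 \le \lambda_k^R \le \mu_k^R$) and meets the budget with equality.

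I expect the main obstacle to be making the exchange argument fully rigorous: verifying that repeated swaps terminate at the stated profile, and that ties ($a_i = a_j$, i.e. $h_i = h_j$) leave the optimal value unchanged even if they permit alternative optimal allocations. The reduction step is immediate once one observes $x$ is constant on the constraint surface. Finally, the remark about bisection concerns the outer, scalar optimization over $c$: letting $W(c)$ denote the optimal value of the LP above, $W$ is continuous on the compact interval $c \in [0,\ \mu_T^S + \mu_T^R]$ (recall $\sum_i a_i \mu_i^R = \mu_T^S + \mu_T^R$), so a maximizer exists, and a one-dimensional line search such as bisection on the first-order stationarity condition of $\max_c W(c)$ recovers the unconstrained social welfare optimum and its allocation.
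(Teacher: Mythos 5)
Your proof of the main claim is correct and follows essentially the same route as the paper: fix $x=\mu_T^S-c$ on the constraint surface, observe that $\Psi$ is then strictly increasing in $\lambda_T^R$ (the paper asserts this directly; you verify the positivity of the coefficient $r^S + r^R(x)(1-p(x))$ explicitly), and fill players greedily in increasing order of $a_i$. Your LP formulation and exchange argument make rigorous the greedy step that the paper's proof simply asserts, so on the main claim you are, if anything, more complete than the paper.

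The one place you fall short of the paper is the \emph{furthermore} clause about bisection. You justify it by continuity of $W(c)$ on a compact interval, which gives existence of a maximizer but does not by itself validate bisection: running bisection on a first-order stationarity condition requires that the derivative change sign exactly once, i.e., some form of unimodality or concavity, and stationary points of a merely continuous (even differentiable) function need not be global maximizers. The paper supplies exactly this missing ingredient by noting that $\Psi$ is strictly concave in $x$, namely $\frac{\partial^2 \Psi}{\partial x^2}=\lambda_T^R\frac{d^2 f_i}{dx^2}<0$, which follows from the strict concavity of the incentive function $f_i$ (Lemma~\ref{lemma:incentive_function}) under Assumptions (A1)--(A2). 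Adding that single observation to your outer scalar optimization over $c$ would close the gap and make your argument equivalent to the paper's.
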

\medskip

\begin{proof}
 { Under the constraint $\sum_{i=1}^N a_i \lambda_i^R =c$ (equivalently, $x= \mu_T^S -c$), $\Psi$ is a strictly increasing function of  $\lambda_T^R$.
     Therefore, for a fixed $\sum_{i=1}^N a_i \lambda_i^R =c$, $\lambda_T^R$ is maximized by selecting $k\!-\!1$ players with smallest $a_i$'s (equivalently, $h_i$) to operate at their highest review admission rate, where the value of $k$ is selected such that $\sum_{i=1}^{k-1} a_i\mu_i^R \le c< \sum_{i=1}^{k} a_i\mu_i^R$.   
     Finally, the $k$-th player in the ordered sequence is selected to operate at a review admission rate such that the constraint $\sum_{i=1}^N a_i\lambda_i^R =\sum_{i=1}^{k} a_i\lambda_i^R=c$, is satisfied. Therefore, the social welfare solution is of the form,
     
     \begin{equation*}
    \lambda^R= \left[\mu_1^R, \ \mu_2^R ,\ldots,  \ \mu_{k-1}^R, \ \frac{1}{a_k}(c -\sum_{i=1}^{k-1}a_i\mu_i^R) ,\  0, \ldots , \ 0 \right].
    \end{equation*}
     
Furthermore, for the function $r^R(x)$ and $p(x)$ satisfying Assumptions (A1-A2), $\Psi$ is strictly concave in $x$, i.e., $\frac{\partial^2\Psi}{\partial x^2}=\lambda_T^R\frac{d^2f_i}{dx^2}<0$ (Lemma~\ref{lemma:incentive_function}). With the known form of the social welfare solution, the value of $c$, which corresponds to the unique maximizer $x$ of $\Psi$, can be computed efficiently by employing a bisection algorithm~\cite{burden19852}.}
\end{proof}
\subsection{Inefficiency of the PNE}
We consider three measures of the inefficiency for the PNE: a) Price of Anarchy (PoA), b) Ratio of total review admission rate ($\eta_{TRI}$), and c) Ratio of Latency ($\eta_{LI}$), which are described by 

 \resizebox{.95\linewidth}{!}{
   \begin{minipage}{\linewidth}
\begin{equation*}
  \!\!\!    PoA = \frac{(\Psi)_{SW}}{(\Psi)_{PNE}}, \ \eta_{TRI}=\frac{({\lambda_T^R})_{SW}}{{(\lambda_T^R})_{PNE}}, \ \eta_{LI}=\frac{(\sum_{i=1}^{N}a_i{\lambda_i^R})_{PNE}}{(\sum_{i=1}^{N}a_i{\lambda_i^R})_{SW}},
\end{equation*}
     \end{minipage}
     }
respectively. While PoA is a widely used measure of the inefficiency, $\eta_{TRI}$ and $\eta_{LI}$ capture the inefficiency of the PNE based on the total review admission rate and the latency (inverse of throughput), respectively. Since incentivizing team collaboration is of interest, all three measures capture the inefficiency of the PNE well.



{We now provide an analytic upper bound for each of these measures of inefficiency for the PNE. To this end, we assume that $\min_i\{\mu_i^S\} > \frac{\mu_T^S h_N}{N(1+h_N)}$. For scenarios wherein 
servicing a task requires much more time than reviewing it, i.e., $\mu_N^S \ll \mu_N^R$ ($h_N \rightarrow 0$), the assumption reduces to $\min_i\{\mu_i^S\} >0$.}
\begin{theorem}[\bit{Analytic bounds on PNE inefficiency}]\label{thm:thm3}
For the CPR game $\Gamma$, under assumptions (A1-A3), and {$\min_i\{\mu_i^S\} > \frac{\mu_T^S h_N}{N(1+h_N)}$}, the inefficiency metrics for the PNE are upper bounded by
\begin{equation}
  PoA < \frac{\mu_T^S a_N}{\mu_T^S - \overline{x}}, \ \eta_{TRI}< \frac{\mu_T^S a_N}{(\mu_T^S - \overline{x})a_1}, \  \eta_{LI}< \frac{\mu_T^S}{\mu_T^S - \overline{x}},
  \end{equation}
%
where $\overline{x}$ is the unique maximizer of $f_i$, i.e., $\frac{d f_i}{d x} (\overline{x})=0$ .
\end{theorem}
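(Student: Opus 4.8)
The plan is to reduce all three metrics to two-sided control of the slackness values $x^{*}$ (at the PNE) and $x_{SW}$ (at the social optimum), and then to handle the price of anarchy by comparison against a single best-type homogeneous player. Write $g(x):=r^R(x)(1-p(x))$, so that $f_i(x)=g(x)-h_ir^S$, $f_i'=g'$, and by (A1)--(A2) $g(0)=g(\mu_T^S)=0$ with $g$ increasing on $(0,\overline{x})$ and maximal at $\overline{x}$; from~\eqref{eq:8}, $\Psi=x r^S+\lambda_T^R(r^S+g(x))$ and $c:=\sum_i a_i\lambda_i^R=\mu_T^S-x$. First I would establish $0<x^{*}<\overline{x}$ and $0<x_{SW}<\overline{x}$. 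The PNE bounds are immediate from Corollary~\ref{corollary2}: $g'(x^{*})>0$ gives $x^{*}<\overline{x}$, and a reviewing player (one exists by (A3)) needs $g(x^{*})>h_ir^S\ge g(0)$, forcing $x^{*}>0$. For the social optimum, Lemma~\ref{lemma:social welfare solution} makes $\Psi$ strictly concave in $x$ with unique maximizer $x_{SW}$; evaluating $d\Psi/dx$ at the ends of $[0,\overline{x}]$ gives a positive value at $0^{+}$ and, at $\overline{x}$, the value $-f_k(\overline{x})/a_k$, where $k$ is the marginal player and $f_k(\overline{x})=g(\overline{x})-h_kr^S>0$ by (A3) (since $g(\overline{x})\ge g(\mu_T^S-a_N\mu_N^R)>h_Nr^S\ge h_kr^S$); hence $0<x_{SW}<\overline{x}$.

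The latency and review-rate ratios then follow directly. From the slackness bounds, $c_{PNE}=\mu_T^S-x^{*}$ and $c_{SW}=\mu_T^S-x_{SW}$ both lie in $(\mu_T^S-\overline{x},\ \mu_T^S)$, so $\eta_{LI}=c_{PNE}/c_{SW}<\mu_T^S/(\mu_T^S-\overline{x})$. Since $a_1\le a_i\le a_N$ gives $c/a_N\le\lambda_T^R\le c/a_1$ for any allocation, I would bound $(\lambda_T^R)_{SW}\le c_{SW}/a_1$ and $(\lambda_T^R)_{PNE}\ge c_{PNE}/a_N$, so $\eta_{TRI}\le\frac{a_N}{a_1}\frac{c_{SW}}{c_{PNE}}<\frac{a_N}{a_1}\frac{\mu_T^S}{\mu_T^S-\overline{x}}$.

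For the price of anarchy I would introduce the best-type homogeneous welfare $\Psi_1(x):=x r^S+\frac{\mu_T^S-x}{a_1}(r^S+g(x))$ (one aggregate player of type $a_1$), whose optimum coincides with its equilibrium (PoA $=1$). Because $\Psi$ increases in $\lambda_T^R$ for fixed $x$ and $\lambda_T^R\le(\mu_T^S-x)/a_1$, every feasible allocation obeys $\Psi\le\Psi_1(x)\le\max_x\Psi_1=:\Psi_1(x_1)$, so $(\Psi)_{SW}\le\Psi_1(x_1)$; conversely $(\lambda_T^R)_{PNE}\ge(\mu_T^S-x^{*})/a_N$ gives $(\Psi)_{PNE}\ge\frac{a_1}{a_N}\Psi_1(x^{*})$. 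Hence
\begin{equation*}
PoA=\frac{(\Psi)_{SW}}{(\Psi)_{PNE}}\le\frac{a_N}{a_1}\cdot\frac{\Psi_1(x_1)}{\Psi_1(x^{*})}.
\end{equation*}
Using $\mu_T^S-x_1\le\mu_T^S$ and $x^{*}<\overline{x}$, the single inequality $g(x_1)\le a_1 g(x^{*})$ then yields $\Psi_1(x_1)\le\mu_T^S(r^S+g(x^{*}))$ and $\Psi_1(x^{*})>\frac{\mu_T^S-\overline{x}}{a_1}(r^S+g(x^{*}))$, so $\Psi_1(x_1)/\Psi_1(x^{*})<a_1\mu_T^S/(\mu_T^S-\overline{x})$ and therefore $PoA<\mu_T^S a_N/(\mu_T^S-\overline{x})$.

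I expect the main obstacle to be exactly the remaining inequality $g(x_1)\le a_1 g(x^{*})$: that the selfish over-exploitation of the common pool at the PNE does not drive its per-task return $g(x^{*})$ below an $a_1$-fraction of the best-type optimal return $g(x_1)$. This is the step that must invoke $\min_i\{\mu_i^S\}>\mu_T^S h_N/(N(1+h_N))$; the hypothesis lower-bounds the aggregate service capacity relative to the heterogeneity, which keeps $x^{*}$ from falling too far below $x_1$ (in particular it should guarantee that even the heaviest type still reviews, i.e.\ $g(x^{*})>h_Nr^S$). Converting this capacity condition into the quantitative return comparison is the crux; everything else reduces, as above, to the two-sided slackness estimate $0<x^{*},x_{SW}<\overline{x}$.
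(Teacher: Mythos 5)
Your treatment of $\eta_{TRI}$ and $\eta_{LI}$ is sound and essentially coincides with the paper's: both arguments reduce everything to the two-sided slackness estimate $0 < x^{*},\, x_{SW} < \overline{x}$, and your endpoint analysis of $d\Psi/dx$ is, if anything, more explicit than the paper's one-line appeal to Corollary~\ref{corollary2}. The PoA bound, however, contains a genuine gap, and it sits exactly where you flagged it. Your whole chain reduces to the unproven inequality $g(x_1)\le a_1 g(x^{*})$ (with your $g(x)=r^R(x)(1-p(x))$), i.e.\ to a lower bound on the per-task return at the PNE, and this is not something the hypothesis $\min_i\{\mu_i^S\} > \mu_T^S h_N/(N(1+h_N))$ can supply: that hypothesis only forces the relevant equilibria to be interior (this is precisely what the paper uses it for in Lemma~\ref{lemma:homogeneous PoA_solution}), and it becomes weaker, not stronger, as $N$ grows. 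Worse, the inequality fails outright. Take all players identical ($h_i\equiv h$, $a_i\equiv a$) with capacities satisfying the hypothesis, so the unique PNE is symmetric and interior. The unilateral first-order condition of Corollary~\ref{corollary2}(iii) gives $\lambda_i^{*}=f(x^{*})/\bigl(a\,\tfrac{df}{dx}(x^{*})\bigr)$ for each player, and summing over the $N$ players yields $f(x^{*})=\frac{\mu_T^S-x^{*}}{N}\,\frac{df}{dx}(x^{*})$. As $N$ grows this forces $f(x^{*})\to 0$, i.e.\ $g(x^{*})\to h r^S$, whereas $x_1$ and $g(x_1)$ are independent of $N$, and nothing in (A1)--(A3) prevents $g(\overline{x})$, hence $g(x_1)$, from being arbitrarily large relative to $r^S$ (e.g.\ scale the constant $A$ in Example 1). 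So $g(x_1)\le a_1 g(x^{*})$ is violated for large $N$, and the same computation shows the weaker ratio bound $\Psi_1(x_1)/\Psi_1(x^{*})< a_1\mu_T^S/(\mu_T^S-\overline{x})$ fails as well; this route cannot be completed.

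For comparison, the paper closes the same hole by a different construction: it builds a homogeneous $N$-player game $\Gamma_N^H$ from the \emph{worst} ratio $h_N$, proves $\Psi^{\Gamma}_{PNE}\ge\Psi^{H}_{PNE}$ by a term-by-term comparison of the two equilibria (Lemma~\ref{lemma:homogeneous vs heterogeneous}, the technical heart of the proof), and then invokes the claim that homogeneous games have PoA $=1$ (Lemma~\ref{lemma:homogeneous PoA}) to evaluate $\Psi^{H}_{PNE}=\Psi^{H}_{SW}$ at the slackness $\overline{x}$, obtaining $\Psi^{\Gamma}_{PNE}\ge\mu_T^Sr^S+\frac{\mu_T^S-\overline{x}}{a_N}f_N(\overline{x})$; dividing this into the upper bound $\Psi^{\Gamma}_{SW}\le\mu_T^S\bigl(r^S+r^R(\overline{x})(1-p(\overline{x}))\bigr)$ gives the stated constant. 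You should be aware, though, that Lemma~\ref{lemma:homogeneous PoA_solution} derives the homogeneous equilibrium by differentiating a player's utility $\frac{\lambda_T^H}{N}f(x)$ with respect to the \emph{aggregate} $\lambda_T^H$ rather than with respect to that player's own rate; the genuine unilateral condition carries the extra factor $1/N$ exhibited above, which is exactly the tragedy-of-the-commons effect that blocks your inequality. In other words, the $N$-dependence you correctly identified as ``the crux'' is the real obstruction here, and neither your comparison with a best-type aggregate player nor the paper's symmetric-deviation computation actually disposes of it.
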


\begin{proof}
See Appendix~\ref{bound_appendix} for the proof.
\end{proof}

\noindent 
\textbf{Example 1:} We show analytic upper bounds on inefficiency measures for the PNE for a specific class of exponential functions $r^R(x)=A[1-\exp\{B(x-\mu_T^S)\}]$ and $p(x)= \exp(-Bx)$, 
where $A$ and $B$ are positive constants, and $x \in [0, \ \mu_T^S]$.

Setting $\frac{d f_i}{d x} (\overline{x})= 0 $, we obtain $\overline{x}=\frac{\mu_T^S}{2}$. Using Theorem~\ref{thm:thm3}, we get PoA $< 2a_N, \eta_{TRI}< \frac{2a_N}{a_1},$ and $\eta_{LI} <2$. {For $\mu_N^S \ll \mu_N^R$,} PoA $< 2a_N \rightarrow 2$, and $\eta_{TRI} < \frac{2a_N}{a_1} < 2a_N \rightarrow 2$.

\section{Numerical Illustrations}\label{Numerical Illustrations}

In this section, we present numerical examples illustrating the uniqueness of PNE and the variation of inefficiency 
with increasing heterogeneity among the players. 



In our numerical illustrations, we obtain the PNE by simulating the sequential best response dynamics of players with randomized initialization of their strategy. We verify the uniqueness of the PNE for different choices of functions, $r^R(x)$ and $p(x)$ satisfying Assumptions (A1-A2), and by following sequential best response dynamics with multiple random initializations for the strategy of each player. Furthermore, in our numerical simulations,  we relax Assumption (A3) and still obtain a unique PNE.

\begin{figure}
	\centering
\begin{subfigure}[b]{0.48\linewidth}
	    \centering
        \includegraphics[width=1\linewidth, height=1\linewidth, keepaspectratio]{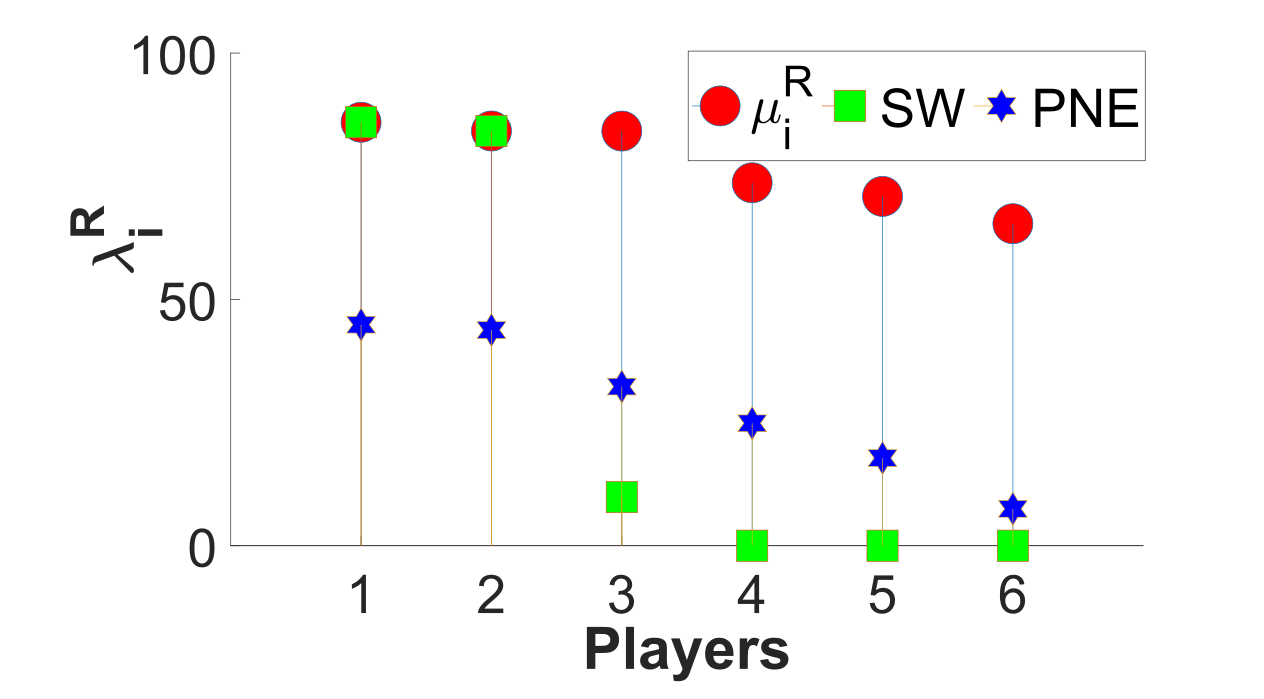}
        \caption{}
        \label{fig:p1}
    \end{subfigure}
    ~
\begin{subfigure}[b]{0.48\linewidth}
	    \centering
        \includegraphics[width=1\linewidth, height=1\linewidth, keepaspectratio]{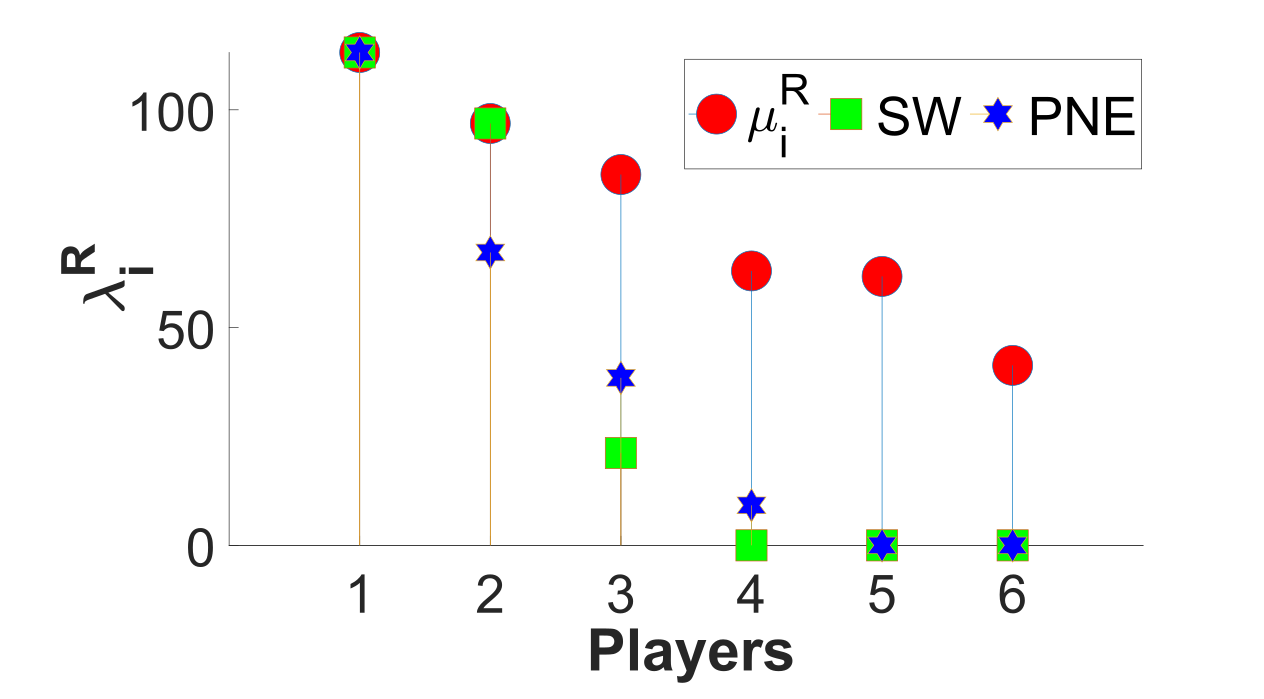}
        \caption{}
        \label{fig:p2}
    \end{subfigure}

    \caption{\footnotesize Social welfare solution (SW) and pure Nash equilibrium for a) low and b) high heterogeneity among players, respectively.  Red circles show the maximum review admission rate ($\mu_i^R$) for player $i$.}
    \label{fig:Social_welfare}
\end{figure}

\begin{figure*}
\centering
	\begin{subfigure}[b]{0.30\textwidth}
	    \centering
	    \includegraphics[width=1.15\linewidth, height=1.15\linewidth, keepaspectratio]{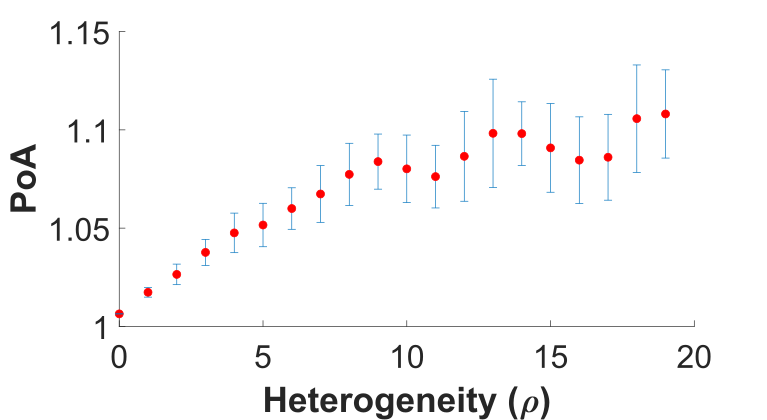}
        \caption{}
        \label{fig:Picture4}
    \end{subfigure}
    ~~
    \begin{subfigure}[b]{0.30\textwidth}
	    \centering
	    \includegraphics[width=1.15\linewidth, height=1.15\linewidth, keepaspectratio]{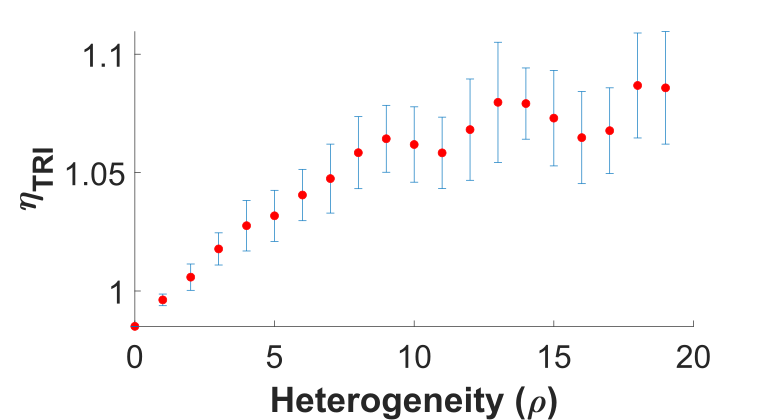}
        \caption{}
        \label{fig:Picture40}
    \end{subfigure}
    ~~
    \begin{subfigure}[b]{0.30\textwidth}
	    \centering
	    \includegraphics[width=1.05\linewidth, height=1.05\linewidth, keepaspectratio]{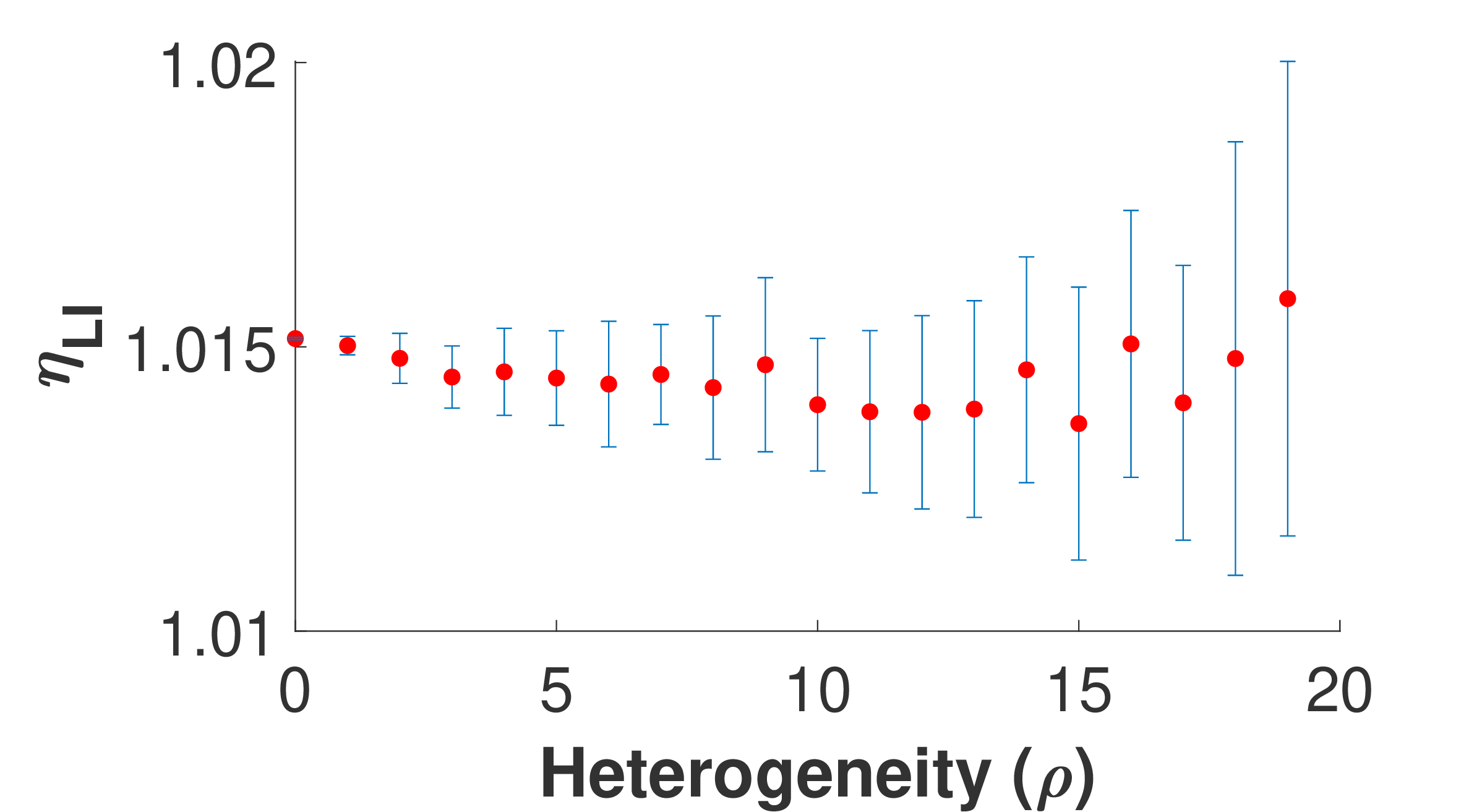}
        \caption{}
        \label{fig:Picture41}
    \end{subfigure}
    \begin{subfigure}[b]{0.30\textwidth}
	    \centering
	    \includegraphics[width=1.15\linewidth, height=1.15\linewidth, keepaspectratio]{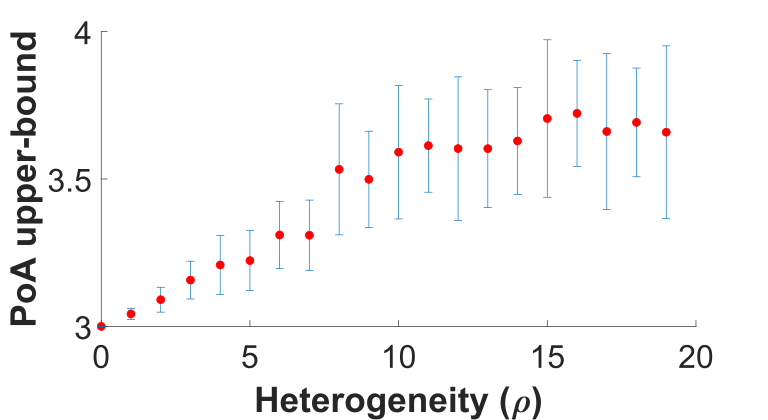}
        \caption{}
        \label{fig:Picture49}
    \end{subfigure}
    ~~
    \begin{subfigure}[b]{0.30\textwidth}
	    \centering
	    \includegraphics[width=1.15\linewidth, height=1.15\linewidth, keepaspectratio]{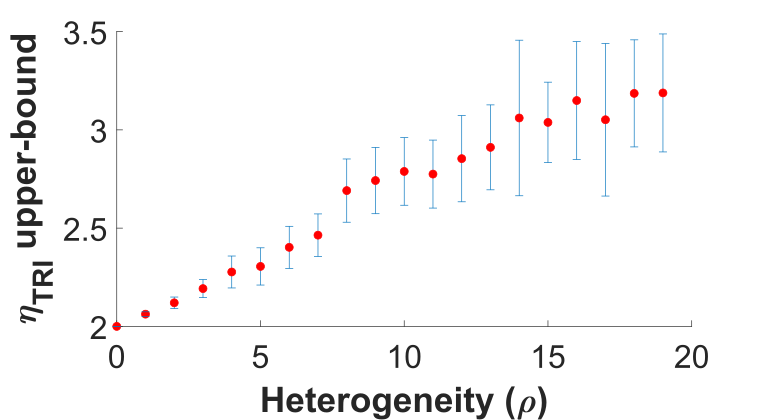}
        \caption{}
        \label{fig:Picture50}
    \end{subfigure}
    ~~
    \begin{subfigure}[b]{0.30\textwidth}
	    \centering
	    \includegraphics[width=1.05\linewidth, height=1.05\linewidth, keepaspectratio]{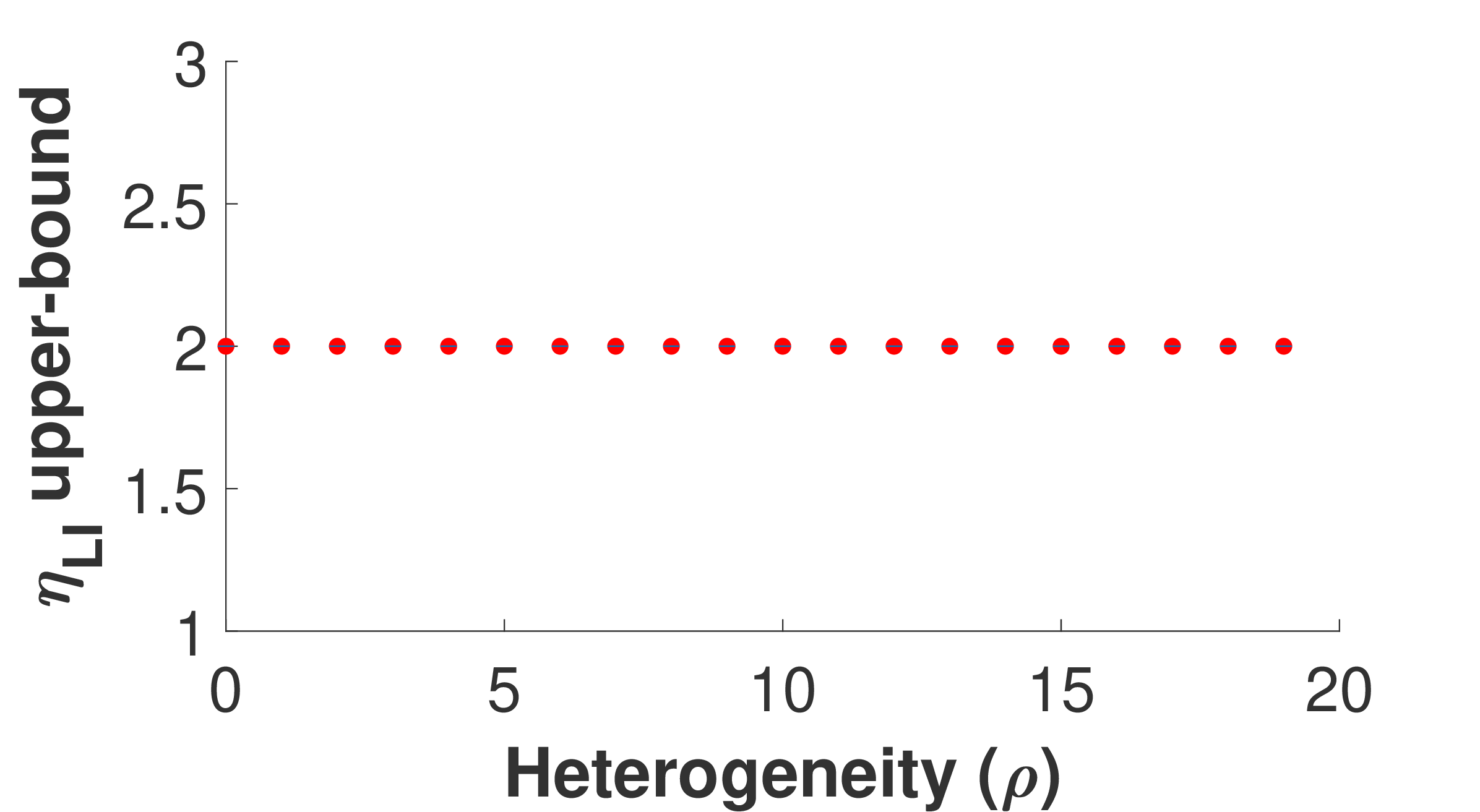}
        \caption{}
        \label{fig:Picture51}
        \end{subfigure}
    \caption{\footnotesize Empirical a) PoA, b) $\eta_{TRI}$, and c) $\eta_{LI}$, along with analytic upper bounds for d) PoA, e) $\eta_{TRI}$, and f) $\eta_{LI}$ with increasing heterogeneity ($\rho$) among the agents.}
    \label{fig:Inefficiencies}
\end{figure*}

An example illustration is shown in Fig.~\ref{fig:Social_welfare}, where we show the social welfare solution (obtained using fmincon in MATLAB) and PNE for low and high heterogeneity in terms of variation in $h_i$ among players, respectively. For our numerical illustrations, we choose the number of players, $N=6$, and choose the functions $r^R(x)$ and $p(x)$, satisfying Assumptions (A1-A2) as following:
\begin{equation*}{\label{r_choice}}
    r^R(\lambda_i^R, \lambda_{-i}^R) = r^R(x)=5[1-\exp\{0.5(x-\mu_T^S)\}],
\end{equation*}
\begin{equation*}{\label{p_choice}}
    p_i^R(\lambda_i^R, \lambda_{-i}^R) = p(x) =\begin{cases}
    1, &  \text{if} \ \ \ \ x\le 0,\\ 
    \exp(-0.5x), & \text{otherwise},
    \end{cases}
\end{equation*}
where $x=\mu_T^S-\sum_{i=1}^{N}a_i\lambda_i^R$ is the slackness parameter. To characterize the heterogeneity among the players, we sample the player's maximum service admission rate $\mu_i^S$ and maximum review admission rate $\mu_i^R$ at random from normal distributions with fixed means,  $M_{\mu_S} \in \mathbb{R}_{>0}$, and $ M_{\mu_R} \in \mathbb{R}_{>0}$, and identical standard deviation, $\rho \in \mathbb{R}_{>0}$. {We only consider realizations that satisfy $\mu_i^S \le \mu_i^R$ for all the players, and hence, $h_i \le 1$. For most practical purposes, where servicing a task requires much more time than reviewing it, the assumption $\mu_i^S \le \mu_i^R$ holds true.} Any non-positive realizations were discarded. We consider the standard deviation of the distributions as the measure of heterogeneity among the players.

    
    


 Fig.~\ref{fig:Social_welfare} shows that in the social welfare solution, players with low ratio of $h_i$ review the tasks at maximum review admission rate and players with high ratio of $h_i$ drop out of the game. At PNE, the strategy profile of players follow the characteristics described by Proposition~\ref{proposition:Nash_Equilibrium}. Lastly, with the increase in heterogeneity among the players, the PNE starts to approach the social welfare solution. 

{Fig.~\ref{fig:Picture4}-\ref{fig:Picture41} and Fig.~\ref{fig:Picture49}-\ref{fig:Picture51} shows the variation of different measures of inefficiency for PNE, and their corresponding analytic upper bounds (see Theorem~\ref{thm:thm3}), with increasing heterogeneity among the players.} Fig.~\ref{fig:Picture4} shows the plot of PoA with increasing heterogeneity. In case of homogeneous players, i.e., $\rho=0$, we obtain $PoA=1$, which we establish in Lemma~\ref{lemma:homogeneous PoA}. As we initially increase the heterogeneity among the players, PNE starts to deviate from the social welfare solution, resulting in an increase in the PoA.
We note that PoA $\le 1.15$, suggesting that the unique PNE is close to the optimal centralized social welfare solution. Fig.~\ref{fig:Picture40} and~\ref{fig:Picture41} shows $\eta_{TRI}$ and $\eta_{LI}$, which are other relevant measures of inefficiency for our problem. 
It is evident from Fig.~\ref{fig:Inefficiencies}, that all three measures of inefficiency are close to $1$, therefore suggesting near-optimal PNE solution.

\section{Conclusions and Future Directions} \label{Conclusions}
We studied incentive design mechanisms to facilitate collaboration in a team of heterogeneous agents that is collectively responsible for servicing and subsequently reviewing a stream of homogeneous tasks. The heterogeneity among the agents is based on their skill-sets and is characterized by their mean service time and mean review time. To incentivize collaboration in the heterogeneous team, we designed a Common-Pool Resource (CPR) game with appropriate utilities and showed the existence of a unique PNE. We showed that the proposed CPR game is an instance of the best response potential game and by playing the sequential best response against each other, players converge to the unique PNE. We characterized the structure of the PNE and showed that at the PNE, the review admission rate of the players decreases with the increasing ratio of {$h_i=\frac{\mu_i^S}{\mu_i^R}$}, i.e., the review admission rate is higher for the players that are ``better" at reviewing the tasks than servicing the tasks (characterized by their average service and review time). Furthermore, we consider three different inefficiency metrics for the PNE, including the Price of Anarchy (PoA), and provide an analytic upper bound for each metric. Additionally, we provide numerical evidence of their proximity to  unity, i.e., the unique PNE is close to the optimal centralized social welfare solution.

There  are  several  possible  avenues  of  future  research. It is of interest to extend the results for a broader class of games with less restrictive choice of utility functions, i.e., games that are not quasi-aggregative or commonly used games of weak strategic substitutes (WSTS)~\cite{dubey2006strategic} or complements (WSTC)~\cite{dubey2006strategic}. An interesting open problem is  to consider  a  team  of  agents  processing  stream of heterogeneous tasks. In such a setting, incentivizing team collaboration based on the task-dependent skill-set of the agents is also of interest.

\normalsize
\appendix
\subsection{Proof of Theorem 1 [Existence of PNE]}\label{Existence_appendix}

We prove Theorem~\ref{thm:thm1} using Brouwer's fixed point theorem~\cite[Appendix C]{basar1999dynamic} applied to the best response mapping with the help of following lemmas (Lemmas~\ref{lemma:incentive_function}-\ref{lemma:best_response_continuous}). 
Recall that $b_i(\lambda_{-i}^R)$ is the best response of player $i$ to the review admission rates of other players $\lambda_{-i}^R$. For brevity of notation, we will 
represent $r^R(\lambda_i^R, \lambda_{-i}^R), \  p(\lambda_i^R, \lambda_{-i}^R), \ f_i(\lambda_i^R, \lambda_{-i}^R), \ \tilde u_i(\lambda_i^R, \lambda_{-i}^R) $ using $r^R, \ p, \ f_i, \ \tilde u_i$, respectively. Furthermore, let $q'$ and $q''$, respectively, represent the first and the second partial derivatives of a generic function $q$ with respect to $\lambda_i^R$. 
\medskip
\begin{lemma}[\bit{Strict concavity of incentive}]\label{lemma:incentive_function}
For the CPR game $\Gamma$, under Assumptions (A1-A2), the incentive function $f_i: S \mapsto \mathbb{R}$ is strictly concave in $\lambda_i^R$, for  $\lambda_i^R \in [0, \ \overline{\lambda}_i^R]$ and any fixed $\lambda_{-i}^R$. Equivalently, $f_i(x)$ is strictly concave in $x$ for $x \in [0, \ \mu_T^S-\sum_{j \in \mc{N}, j \neq i}a_j\lambda_j^R].$
\end{lemma}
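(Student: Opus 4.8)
The plan is to reduce the claim to strict concavity in the single scalar variable $x$ and then read off the sign of the second derivative from Assumptions (A1)--(A2). For fixed $\lambda_{-i}^R$, the map $\lambda_i^R \mapsto x = \mu_T^S - \sum_{j\neq i} a_j\lambda_j^R - a_i\lambda_i^R$ is affine with nonzero slope $-a_i$ (recall $a_i = 1+h_i > 0$), so $f_i'' = a_i^2 \frac{d^2 f_i}{dx^2}$ and the two second derivatives share their sign; since an affine reparametrization preserves strict concavity, it is equivalent to prove strict concavity in $x$ on $[0,\ \mu_T^S - \sum_{j\neq i}a_j\lambda_j^R]$. The additive constant $-h_i r^S$ in $f_i(x) = r^R(x)(1-p(x)) - h_i r^S$ is irrelevant, so it suffices to show $g''(x) < 0$ on the interior of this interval, where $g(x) := r^R(x)(1-p(x))$.

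Next I would differentiate twice. A direct computation gives
\begin{equation*}
  g''(x) = (r^R)''(1-p) - 2(r^R)'p' - r^R p'',
\end{equation*}
and I read off the sign of each summand. By (A1), $r^R$ is strictly decreasing and strictly concave, so $(r^R)' < 0$ and $(r^R)'' < 0$, and $r^R(x) > 0$ for $x < \mu_T^S$ since $r^R$ decreases to $r^R(\mu_T^S)=0$. By (A2), $p$ is non-increasing and convex with $p \le 1$, so $1-p \ge 0$, $p' \le 0$, and $p'' \ge 0$. Hence the first term satisfies $(r^R)''(1-p) \le 0$, the second satisfies $-2(r^R)'p' \le 0$ (a product of two nonpositive factors, negated), and the third satisfies $-r^R p'' \le 0$. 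Therefore $g''(x) \le 0$, which already yields concavity.

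The remaining, and in my view the only delicate, step is upgrading this to \emph{strict} concavity, which I expect to be the main obstacle. It is enough that one summand be strictly negative throughout the open interval $x>0$, and I would extract this from the first term: since $p$ is non-increasing with $p(x)\to 1$ as $x\to 0$ and $p(x) < 1$ for every $x>0$, we have $1-p(x) > 0$ on the interior, so $(r^R)''(1-p) < 0$ there and hence $g''(x) < 0$. Strict negativity of $g''$ on the open interval gives strict concavity of $g$, and thus of $f_i$, on the closed interval; the behaviour at the single endpoint $x=0$ (where $1-p\to 0$ and $g''$ may vanish) does not affect strict concavity. The point requiring care is the justification that $1-p(x)>0$ on $(0,\cdot)$: monotonicity and the limit $p\to 1$ only give $p\le 1$, so I would invoke the design of $p$ (cf. the exponential model $p(x)=\exp(-Ax)$) that $p$ is not identically $1$ on any initial subinterval $(0,c]$, equivalently $p<1$ for $x>0$ --- otherwise $g\equiv 0$ on that subinterval and strict concavity genuinely fails.
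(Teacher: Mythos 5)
Your proposal is correct and is essentially the paper's own proof: the paper likewise computes $f_i'' = (r^R)''(1-p) - 2(r^R)'p' - r^Rp'' = a_i^2\frac{d^2 f_i}{dx^2}$ and concludes $f_i''<0$ directly from the sign conditions in (A1)--(A2), exactly your decomposition and term-by-term sign analysis. Your handling of strictness is in fact more careful than the paper's, which simply asserts strict negativity where the assumptions give only $\le 0$ term-by-term; your requirement that $1-p(x)>0$ for $x>0$ is the right fix, and note that under (A2) the convexity of $p$ together with $p\to 1$ as $x\to 0^+$ already forces the dichotomy that either $p<1$ on all of $(0,\ \mu_T^S]$ or $p\equiv 1$ there, so only the fully degenerate choice $p\equiv 1$ (excluded by the intended design of $p$, e.g.\ the exponential model) needs to be ruled out.
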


\begin{proof}
{Recall from~\eqref{eq:def-fi} that
\begin{equation*}
    f_i(\lambda_i^R,\lambda_{-i}^R)=f_i(x)=r^R(x)(1-p(x)) -h_ir^S.
\end{equation*}}
The first and the second partial derivative of the incentive function $f_i$ with respect to $\lambda_i^R$ in the interval $\lambda_i^R \in [0, \ \overline{\lambda}_i^R]$ are given by:
\begin{subequations}{\label{eq:12}}
\begin{equation}{\label{eq:first derivative}}
f_i'=(r^R)'(1-p)- r^Rp'= -a_i\frac{df_i}{dx},
\end{equation}
\begin{equation}{\label{eq:second derivative}}
  f_i'' =(r^R)''(1-p)- 2(r^R)'p' -r^Rp''= a_i^2\frac{d^2 f_i}{dx^2}.
\end{equation}
\end{subequations}

From Assumptions (A1) and (A2), we have $f_i''<0$ and $\frac{d^2f_i}{dx^2}<0$ in the interval where derivative of $f_i$ exists, thereby proving the strict concavity of $f_i$ in $\lambda_i^R$ and $x$. 
\end{proof}
\medskip
\begin{lemma}[\bit{Best response mapping}]\label{lemma:best_response_mapping}
For the CPR game $\Gamma$, under Assumptions (A1-A2), the best response mapping $b_i(\lambda_{-i}^R)$ is unique for any $\lambda_{-i}^R \in S_{-i}$ and is given by:    
\resizebox{0.9\linewidth}{!}{
    \begin{minipage}{\linewidth}
\begin{equation*}{\label{eq:10}}
    b_i(\lambda_{-i}^R) = \begin{cases}
    0, & \text{if }  f_i(\lambda_i^R,\cdot) \leq 0, \ \ \forall \lambda_i^R \in S_i,\\
    \alpha_i , & \text{if } \exists \alpha_i \in S_i \ \text{s.t. } \ \frac{\partial \tilde u_i}{\partial\lambda_i^R}(\alpha_i)=0, \ \text{and } f_i(\alpha_i,\cdot)>0, \\
    \mu_i^R, & \text{otherwise }.\\
    \end{cases}
\end{equation*}
\end{minipage}
}
\end{lemma}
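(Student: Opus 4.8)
The plan is to reduce the problem to maximizing $H(\lambda_i^R):=\tilde u_i-\mu_i^Sr^S=\lambda_i^Rf_i(\lambda_i^R)$ over $S_i=[0,\mu_i^R]$, since the constant $\mu_i^Sr^S$ does not move the maximizer, and then to exploit the strict concavity of $f_i$ from Lemma~\ref{lemma:incentive_function}. First I would restrict attention to the feasible set $[0,\overline\lambda_i^R]$: for $\lambda_i^R\in(\overline\lambda_i^R,\mu_i^R]$ the constraint~\eqref{eq:3} is violated, so $p=1$ and $f_i=-h_ir^S<0$, whence $H=\lambda_i^Rf_i<0=H(0)$; thus every constraint-violating action is strictly dominated by $\lambda_i^R=0$ and the maximizer lies in $[0,\overline\lambda_i^R]$. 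A useful consequence is that $f_i(\lambda_i^R,\cdot)$ can be positive only on the feasible part, so ``$f_i\le0$ on all of $S_i$'' is equivalent to ``$f_i\le0$ on $[0,\overline\lambda_i^R]$''. This settles the first branch: if $f_i\le0$ everywhere then $H\le0=H(0)$, giving $b_i=0$ (Case~1).

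For the remaining branches I would use the shape of $x\mapsto f_i(x)$. By (A1)--(A2) the endpoints satisfy $f_i(0)=-h_ir^S<0$ (as $1-p\to0$) and $f_i(\mu_T^S)=-h_ir^S<0$ (as $r^R(\mu_T^S)=0$), while Lemma~\ref{lemma:incentive_function} gives strict concavity; hence $f_i$ has a unique maximizer $\overline x\in(0,\mu_T^S)$ with $\frac{df_i}{dx}(\overline x)=0$, and whenever $f_i$ attains a positive value the set $\{x:f_i(x)>0\}$ is an open interval containing $\overline x$. The crux is that $H$ need not be concave on the whole feasible set, so I would split it at $\overline x$ using the identities $f_i'=-a_i\frac{df_i}{dx}$ and $f_i''=a_i^2\frac{d^2f_i}{dx^2}$ from~\eqref{eq:12}. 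On the region $x\le\overline x$ (equivalently $\lambda_i^R$ large) one has $\frac{df_i}{dx}\ge0$, so $f_i'\le0$, and since $f_i''<0$ the second derivative $H''=2f_i'+\lambda_i^Rf_i''<0$; thus $H$ is strictly concave there. On the complementary region $x>\overline x$ restricted to where $f_i>0$, one has $\frac{df_i}{dx}<0$, so $f_i'>0$ and $H'=f_i+\lambda_i^Rf_i'>0$; thus $H$ is strictly increasing there.

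Combining the two pieces, $H$ increases strictly as $\lambda_i^R$ passes through the positive-incentive zone with $x>\overline x$ and is strictly concave once $x\le\overline x$; continuity across $\overline x$ then shows that $H'$ is strictly decreasing on the concave piece and changes sign at most once, from $+$ to $-$, so the maximizer of $H$ on $[0,\overline\lambda_i^R]$ is unique. If the stationary point $\alpha_i$ is feasible and interior it satisfies $\frac{\partial\tilde u_i}{\partial\lambda_i^R}(\alpha_i)=0$ with $f_i(\alpha_i)>0$ — the first-order condition forces $f_i'=-f_i/\lambda_i^R<0$, consistent with $x<\overline x$ — giving $b_i=\alpha_i$ (Case~2); if instead the feasible set terminates while $H$ is still increasing, which can occur only when $\overline\lambda_i^R=\mu_i^R$, the maximizer is the boundary point $\mu_i^R$ (Case~3). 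Uniqueness of the maximizer yields uniqueness of the best response.

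I expect the main obstacle to be precisely the failure of global concavity of $\tilde u_i$ on the feasible set: near small $\lambda_i^R$ (large $x$) the map can be convex, so no single concavity or monotonicity argument covers the whole interval. The decomposition at $\overline x$ — strict concavity below it and strict monotone increase above it on the positive-incentive set — is what resolves this and pins down a single crossing of $H'$. One degenerate case worth flagging is the tangency $\max_x f_i=0$, where $\lambda_i^R=0$ and the touching point tie; this knife-edge is non-generic and is subsumed into Case~1.
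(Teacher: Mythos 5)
Your proposal is correct and takes essentially the same route as the paper's proof: restrict attention to the feasible set by noting that a violated constraint forces $f_i=-h_ir^S<0$, use the strict concavity of $f_i$ (Lemma~\ref{lemma:incentive_function}) via the identities $f_i'=-a_i\frac{df_i}{dx}$ and $f_i''=a_i^2\frac{d^2f_i}{dx^2}$, and split into the region where $f_i'<0$ (your $x<\overline{x}$), where $\lambda_i^Rf_i''+2f_i'<0$ makes $\tilde u_i$ strictly concave, versus the region where $f_i>0$ and $f_i'\ge 0$, where $\tilde u_i$ is strictly increasing --- precisely the paper's three cases. Your explicit gluing of the monotone and concave pieces into a single sign change of $H'$ is a slightly more careful rendering of the paper's ``boundary or first-order condition'' step, and the tangency case $\max_x f_i=0$ you flag (where $0$ and the tangency point genuinely tie) is a knife-edge the paper's Case~1 also passes over silently; it is ruled out once (A3) is imposed.
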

\medskip
\begin{proof}
We establish uniqueness of the best response mapping through the following three cases.

\medskip
\noindent \textbf{Case 1:} $f_i(\lambda_i^R, \cdot) \leq 0,$ for every $\lambda_i^R \in S_i$.

\medskip
If for a given $\lambda_{-i}^R \in S_{-i}$, $f_i(\lambda_i^R, \cdot) \leq 0,$ for every $\lambda_i^R \in S_i$, then from~\eqref{eq:7}, $\tilde u_i(\lambda_i^R, \ \lambda_{-i}^R)$ admits a unique maximum at $\lambda_i^R=0$, and  therefore, $b_i(\lambda_{-i}^R)=0$ is the unique best response.


\medskip
\noindent \textbf{Case 2:} There exists a non-empty interval $ \overline{S_i} \subset S_i$, such that $f_i(\lambda_i^R, \cdot) > 0$, and $f_i'(\lambda_i^R, \cdot) <0,$ for every $\lambda_i^R \in \overline{S_i}$.



\medskip
{For any given $\lambda_{-i}^R \in S_{-i}$, recall that the system constraint~\eqref{eq:3} is violated for every $\lambda_i^R \in (\overline{\lambda}_i^R, \mu_i^R] \subset S_i$, and $p(\lambda_i^R, \lambda_{-i}^R)=1$.
Therefore, for every $\lambda_i^R \in (\overline{\lambda}_i^R, \mu_i^R]$, we have}
\begin{equation}{\label{eq:11}}
    f_i= -h_ir^S < 0. 
\end{equation}
Therefore, $b_i(\lambda_{-i}^R) \in [0, \ \overline{\lambda}_i^R] \subset S_i$, for any given $\lambda_{-i}^R \in S_{-i}$. Furthermore, for a fixed $\lambda_{-i}^R$, since $p$ is continuously differentiable with respect to $ \lambda_i^R$, for each $\lambda_i^R \in (0, \ \overline{\lambda}_i^R), \ \tilde u_i$ is a smooth function on the set $[0, \ \overline{\lambda}_i^R]\times S_{-i}$. Hence, the best response, which is a global maximizer of $\tilde u_i$ on the interval $\lambda_i^R \in S_i$, either occurs at the boundary of $S_i$ or satisfies the first order condition, $\frac{\partial \tilde u_i}{\partial \lambda_i^R}(b_i) =0$ (see~\cite{luenberger1984linear}).

Let {there exist} $\alpha_i \in S_i $ such that  
\begin{subequations}\label{eq:alpha-exist}
\begin{equation}
f_i(\alpha_i, \cdot)>0, \quad \text{and} \quad 
\end{equation}
\begin{equation}\label{eq:13}
      \frac{\partial \tilde u_i}{\partial \lambda_i^R}(\alpha_i) =\alpha_if'_i(\alpha_i, \cdot) +   f_i(\alpha_i, \cdot) = 0. 
\end{equation}
\end{subequations}
%
Since $f_i(\alpha_i, \cdot)>0$ and $\alpha_i >0$, ~\eqref{eq:13} has a solution only if $f'_i(\alpha_i, \cdot)<0$. Furthermore, $f_i(\alpha_i, \cdot)>0$ implies $\alpha_i \in [0, \ \overline{\lambda}_i^R]$ (see~\eqref{eq:11}). Therefore, existence of $\alpha_i$ satisfying~\eqref{eq:alpha-exist} implies
there exists a non-empty set $\overline{S_i} \subset [0, \ \overline{\lambda}_i^R] \subset S_i$, such that for each $\alpha_i \in \overline{S_i}$,  $f_i(\alpha_i, \cdot)>0$ and $f'_i(\alpha_i, \cdot)<0$. For any $\lambda_i^R \in \overline{S_i}$, such that $f_i(\lambda_i^R, \cdot)>0$ and $f'_i(\lambda_i^R, \cdot)<0$, using Lemma~\ref{lemma:incentive_function}, we get:
\begin{equation}{\label{eq:14}}
  \frac{\partial^2 \tilde u_i}{\partial {\lambda_i^R}^2} =\lambda_i^Rf''_i +   2f'_i <0.
\end{equation}
Hence, for $\lambda_i^R \in \overline{S_i}$, the expected utility $\tilde u_i$ is strictly concave with a unique global maximizer $\alpha_i \in \overline{S_i}$ {that satisfies $\alpha_i= \min\{-\frac{f_i(b_i, \cdot)}{f'_i(b_i,\cdot)}, \ \mu_i^R\}$} (see~\eqref{eq:13}). 

\medskip
\noindent \textbf{Case 3:} There exists a non-empty interval $\tilde{S_i} \subset S_i$, such that $f_i(\lambda_i^R, \cdot) > 0, $ for every $ \lambda_i^R \in \tilde{S_i}$, and $f_i'(\lambda_i^R, \cdot) \geq 0,$  for any $  \lambda_i^R \in S_i$.

\medskip

Finally, consider the case that $f'_i(\lambda_i^R, \cdot) \ge 0,$ for every $\lambda_i^R \in S_i$, and there exists an interval $\tilde{S_i} \subset S_i$ where $f_i(\lambda_i^R, \cdot) > 0,$ for any $ \lambda_i^R \in \tilde{S_i}$. Since $f'_i(\lambda_i^R, \cdot)\ge0, $ for every $ \lambda_i^R \in S_i $, i.e., $f_i(\lambda_i^R, \cdot)$ is increasing in $\lambda_i^R$,  and therefore, $f_i(\lambda_i^R, \cdot)$ is maximized at $\lambda_i^R=\mu_i^R$. Since there exists a non-empty interval $\tilde{S_i}$ such that $f_i(\lambda_i^R, \cdot) > 0, $ for every $ \lambda_i^R \in \tilde{S_i}$, monotonically increasing $f_i(\lambda_i^R, \cdot)$, it follows $\mu_i^R \in \tilde{S_i}$,  and $f_i(\mu_i^R, \cdot)>0$. Therefore, in the interval $\lambda_i^R \in \tilde{S_i}$, ~\eqref{eq:13} has no solution and the expected utility of player $i$ is strictly increasing in $\lambda_i^R$, i.e., $\frac{\partial{\tilde{u}_i}}{\partial{\lambda_i^R}}>0$, for every $\lambda_i^R \in S_i$. Therefore, the best response is the unique maximum of $\tilde u_i$ which occurs at the boundary $\mu_i^R$.
\end{proof}
\medskip
{We state some important intermediate results from three cases of Lemma~\ref{lemma:best_response_mapping} as a corollary for later discussions.}
\begin{corollary}[\bit{Best response and incentive}]\label{corollary1}
For the CPR game $\Gamma$, under Assumptions (A1-A3), the following statements hold:
\begin{enumerate}
    \item {$b_i=0$, if and only if, $f_i(\lambda_i^R,\cdot)\le 0$, for every $\lambda_i^R \in S_i$; furthermore, $f_i(\lambda_i^R,\cdot)\le 0$, for every $\lambda_i^R \in S_i$ implies $f'_i(\lambda_i^R,\cdot)< 0$, for every $\lambda_i^R \in S_i$};
    \item if there exists an interval $\overline{S_i} \subset S_i$, such that $f_i(\lambda_i^R, \cdot) > 0$, and $f_i'(\lambda_i^R, \cdot) <0, \ \forall \lambda_i^R \in \overline{S_i}$, then the unique best response for player $i$ satisfies the implicit equation $b_i= \min\{-\frac{f_i(b_i, \cdot)}{f'_i(b_i,\cdot)}, \ \mu_i^R\}\in S_i$; and
    \item if $f'_i(\lambda_i^R,\cdot) \ge 0$, for every $\lambda_i^R \in S_i$, then $b_i=\mu_i^R$.
\end{enumerate}
\end{corollary}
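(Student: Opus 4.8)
The plan is to read the three statements directly off the three cases already established in the proof of Lemma~\ref{lemma:best_response_mapping}, supplying only the implications that are not immediate. Statement~(ii) is the conclusion of Case~2: the stationarity condition~\eqref{eq:13}, $\alpha_i f_i'(\alpha_i,\cdot)+f_i(\alpha_i,\cdot)=0$, rearranges to $\alpha_i=-f_i(\alpha_i,\cdot)/f_i'(\alpha_i,\cdot)$, and capping at the boundary $\mu_i^R$ yields $b_i=\min\{-f_i(b_i,\cdot)/f_i'(b_i,\cdot),\ \mu_i^R\}$; strict concavity of $\tilde u_i$ on $\overline{S_i}$, i.e.~$\partial^2\tilde u_i/\partial(\lambda_i^R)^2<0$ from~\eqref{eq:14}, makes this maximizer unique. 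The forward implication of statement~(i) is precisely Case~1. Thus only the converse of~(i) and the ``furthermore'' clause require genuine work, and I will show that statement~(iii) then follows as a by-product.

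For the converse of statement~(i) I would argue by contraposition, without re-deriving the best response. Suppose $f_i(\alpha,\cdot)>0$ for some $\alpha\in S_i$; by continuity of $f_i$ (Assumptions (A1)--(A2)) we may take $\alpha>0$. Then from~\eqref{eq:7}, $\tilde u_i(\alpha,\cdot)=\mu_i^Sr^S+\alpha f_i(\alpha,\cdot)>\mu_i^Sr^S=\tilde u_i(0,\cdot)$, so $0$ is not a maximizer and $b_i\neq 0$. Hence $b_i=0$ forces $f_i(\lambda_i^R,\cdot)\le 0$ for every $\lambda_i^R\in S_i$, which is the required converse.

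The main obstacle is the derivative claim, $f_i\le 0$ everywhere $\Rightarrow f_i'<0$ everywhere, since non-positivity of $f_i$ by itself does not locate the peak of the (strictly concave, by Lemma~\ref{lemma:incentive_function}) incentive relative to player $i$'s admissible range. Here I would invoke Assumption~(A3): because $f_i(\mu_i^R,0)>0$ and $\overline{x}$ is the global maximizer of $x\mapsto f_i(x)$, we get $f_i(\overline{x})\ge f_i(\mu_i^R,0)>0$, so the peak value is strictly positive. Consequently $f_i\le 0$ over player $i$'s admissible range of $x$ (whose largest value $a_i\Lambda_i$ is attained at $\lambda_i^R=0$) can hold only if this range avoids $\overline{x}$. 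Using $f_i(\mu_T^S-a_i\mu_i^R)=f_i(\mu_i^R,0)>0$ together with the fact that $f_i$ is decreasing on $[\overline{x},\ \mu_T^S]$, I would rule out the range lying to the right of $\overline{x}$, leaving every admissible $x$ strictly below $\overline{x}$; there $\frac{d f_i}{d x}>0$, whence $f_i'=-a_i\frac{d f_i}{d x}<0$ by~\eqref{eq:first derivative}. Statement~(iii) then follows by contraposition of this clause: $f_i'\ge 0$ somewhere implies $f_i>0$ somewhere, so the hypothesis $f_i'(\lambda_i^R,\cdot)\ge 0$ for all $\lambda_i^R\in S_i$ places us in Case~3, giving $b_i=\mu_i^R$. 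The one delicate point I anticipate is the constraint-violated sub-interval $(\overline{\lambda}_i^R,\ \mu_i^R]$, on which $p\equiv 1$ and $f_i\equiv -h_ir^S$ is constant; the derivative claim is to be read on the region governed by the smooth strictly concave formula of Lemma~\ref{lemma:incentive_function}, consistent with the scope of that lemma.
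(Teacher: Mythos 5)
Your proposal is correct and takes essentially the same route as the paper: statements (ii), (iii), and the forward half of (i) are read off the cases of Lemma~\ref{lemma:best_response_mapping}, the converse of (i) is the same utility-comparison argument ($f_i(\alpha,\cdot)>0$ for some $\alpha>0$ gives $\tilde u_i(\alpha,\cdot)>\tilde u_i(0,\cdot)$), and the ``furthermore'' clause is proved by the same mechanism — anchoring $f_i(\mu_T^S-a_i\mu_i^R)>0$ via (A3), using strict concavity to rule out the admissible range of $x$ lying to the right of the peak, and concluding $\frac{df_i}{dx}>0$, hence $f_i'<0$, on the left — with the only cosmetic difference that the paper phrases this through the zero-crossings $\gamma_1<\gamma_2$ of $f_i$ while you use the maximizer $\overline{x}$. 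Your two refinements (the explicit contraposition bridging the furthermore clause to statement (iii), and the caveat about the flat constraint-violated region where $f_i\equiv -h_ir^S$) address points the paper's own proof silently glosses over, so your writeup is, if anything, slightly more careful.
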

\medskip
\begin{proof}
We only establish the first statement of the corollary. The other statements are established in the proof of Lemma~\ref{lemma:best_response_mapping}. We have already established in Lemma ~\ref{lemma:best_response_mapping} that if $f_i(\lambda_i^R,\cdot)\le 0$, then for every $\lambda_i^R \in S_i$, the expected utility $\tilde{u}_i$ is maximized for $b_i=0$. We now establish the ``only if" part. Recall from~\eqref{eq:7} that
\begin{equation*}
    \tilde{u}_i(\lambda_i^R, \lambda_{-i}^R)=\mu_i^Sr^S +\lambda_i^Rf_i(\lambda_i^R, \lambda_{-i}^R).
\end{equation*}
Let $b_i=0$ be the best response for player $i$ for a fixed $\lambda_{-i}^R$. If there exists $b \in S_i$, such that $f_i(b, \cdot) > 0$, then $\tilde{u}_i(b, \cdot) > \tilde{u}_i(b_i, \cdot)$, and $b_i=0$ cannot be a best response. Hence, $b_i=0$ is the best response for player $i$, if and only if, $f_i(\lambda_i^R,\cdot)\le 0$, for every $\lambda_i^R \in S_i$.

 We now show that if $f_i(\lambda_i^R,\cdot)\le 0$, for every $\lambda_i^R \in S_i$, then $f'_i(\lambda_i^R,\cdot)< 0$, for every $\lambda_i^R \in S_i$. Since $f_i$ is strictly concave in $x$ (from Lemma~\ref{lemma:incentive_function}) and $f_i(\mu_i^R, \ 0) = f_i(\mu_T^S-a_i \mu_i^R) > 0$ by Assumption (A3), there exist $\gamma_1, \gamma_2 \in \real$ such that  $\gamma_1 < \mu_T^S-a_i \mu_i^R < \gamma_2$ and $f_i(x)>0$ if and only if $x \in (\gamma_1, \gamma_2)$.  
 
 If $f_i(\lambda_i^R, \lambda_{-i}^R) = f_i(x)\le 0$ for each $\lambda_i^R \in S_i$ and for a given $\lambda_{-i}^R$, then for each $\lambda_i^R \in S_i$, either $x \le \gamma_1$, or $x \ge \gamma_2$.  Suppose $x \ge \gamma_2$, for each $\lambda_i^R \in S_i$. However, for $\lambda_i^R = \mu_i^R$, $x = \mu_T^S - a_i \mu_i - \sum_{j \ne i} a_j \lambda_j^R \le \mu_T^S - a_i \mu_i < \gamma_2$, which is a contradiction. Hence, $x \le \gamma_1$, for each $\lambda_i^R \in S_i$. 
 
Finally, from strict concavity of $f_i$, $f_i$ is increasing  in $x$ for $x \le \gamma_1$. Equivalently, $f_i$ is decreasing in $\lambda_i^R$, i.e., $f'_i(\lambda_i^R,\cdot)< 0$, for every $\lambda_i^R \in S_i$. 
\end{proof}

\begin{theorem}[\bit{Berge Maximum Theorem, adapted from~\cite{berge1997topological}}]\label{thm:berge}
Let $\tilde{u}_i : S_i \times S_{-i} \mapsto \mathbb {R}$ be a continuous function on $S_i \times S_{-i}$, and $C: S_{-i} \mapsto S_{i}$ be a compact valued correspondence such that $C(\lambda_{-i}^R) \neq \emptyset$ for all $\lambda_{-i}^R \in S_i$. Define $\tilde{u}_i^* : S_{-i} \mapsto \mathbb{R}$ by

\begin{equation*}
    \tilde{u}_i^*(\lambda_{-i}^R)=\max\{ \tilde{u}_i(\lambda_i^R, \lambda_{-i}^R) \  | \  \lambda_i^R \in C(\lambda_{-i}^R) \},
\end{equation*}

and $b_{i}: S_{-i} \mapsto S_i$ by

\begin{equation*}
    b_{i}(\lambda_{-i}^R)=\argmax\{ \tilde{u}_i(\lambda_i^R, \lambda_{-i}^R) \  | \  \lambda_i^R \in C(\lambda_{-i}^R) \}.
\end{equation*}

If $C$ is continuous at $\lambda_{-i}^R$, then $\tilde{u}_i^*$ is continuous and $b_{i}$ is upper hemicontinuous with nonempty and compact values. Furthermore, if $\tilde{u}_i$ is strictly quasiconcave in $\lambda_i^R \in S_i$ for each $\lambda_{-i}^R$  and $C$ is convex-valued, then $b_{i}(\lambda_{-i}^R)$ is single-valued, and thus is a continuous function.
\end{theorem}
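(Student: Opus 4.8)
The plan is to follow the classical proof of Berge's maximum theorem, exploiting that the strategy spaces $S_i$ and $S_{-i}$ are subsets of $\mathbb{R}$, hence metric, so that I may argue throughout with sequences. I read the hypothesis ``$C$ is continuous at $\lambda_{-i}^R$'' as both upper and lower hemicontinuity, and I will invoke each half separately.

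First I would establish that $\tilde{u}_i^*$ is well defined and that $b_i(\lambda_{-i}^R)$ is nonempty and compact. For fixed $\lambda_{-i}^R$, the map $\lambda_i^R \mapsto \tilde{u}_i(\lambda_i^R, \lambda_{-i}^R)$ is continuous and $C(\lambda_{-i}^R)$ is nonempty and compact, so by the Weierstrass extreme value theorem the maximum is attained; hence $\tilde{u}_i^*(\lambda_{-i}^R)$ exists and $b_i(\lambda_{-i}^R) \neq \emptyset$. The argmax set is the intersection of the compact set $C(\lambda_{-i}^R)$ with the closed set $\{\lambda_i^R : \tilde{u}_i(\lambda_i^R, \lambda_{-i}^R) = \tilde{u}_i^*(\lambda_{-i}^R)\}$, and is therefore compact.

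The main work, and the step I expect to be the chief obstacle, is the continuity of $\tilde{u}_i^*$, which I would prove by establishing upper and lower semicontinuity separately, each drawing on a different half of the continuity of $C$. For upper semicontinuity, take $\lambda_{-i}^{R,n} \to \lambda_{-i}^R$, pass to a subsequence realizing $\limsup_n \tilde{u}_i^*(\lambda_{-i}^{R,n})$, and pick maximizers $\lambda_i^{R,n} \in b_i(\lambda_{-i}^{R,n})$; upper hemicontinuity of $C$ with compact values lets me extract a convergent subsequence $\lambda_i^{R,n} \to \hat{\lambda}_i^R \in C(\lambda_{-i}^R)$, and continuity of $\tilde{u}_i$ then yields $\limsup_n \tilde{u}_i^*(\lambda_{-i}^{R,n}) = \tilde{u}_i(\hat{\lambda}_i^R, \lambda_{-i}^R) \le \tilde{u}_i^*(\lambda_{-i}^R)$. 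For lower semicontinuity, let $\lambda_i^{R*} \in b_i(\lambda_{-i}^R)$ be optimal at the limit point; lower hemicontinuity of $C$ produces feasible $\lambda_i^{R,n} \in C(\lambda_{-i}^{R,n})$ with $\lambda_i^{R,n} \to \lambda_i^{R*}$, so that $\liminf_n \tilde{u}_i^*(\lambda_{-i}^{R,n}) \ge \liminf_n \tilde{u}_i(\lambda_i^{R,n}, \lambda_{-i}^{R,n}) = \tilde{u}_i(\lambda_i^{R*}, \lambda_{-i}^R) = \tilde{u}_i^*(\lambda_{-i}^R)$. Combining the two inequalities gives continuity of $\tilde{u}_i^*$.

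With $\tilde{u}_i^*$ continuous, upper hemicontinuity of $b_i$ follows from a closed-graph argument, using that the codomain is compact: given $\lambda_{-i}^{R,n} \to \lambda_{-i}^R$ and $\lambda_i^{R,n} \in b_i(\lambda_{-i}^{R,n})$ with $\lambda_i^{R,n} \to \lambda_i^R$, upper hemicontinuity of $C$ gives $\lambda_i^R \in C(\lambda_{-i}^R)$, while continuity of $\tilde{u}_i$ and of $\tilde{u}_i^*$ give $\tilde{u}_i(\lambda_i^R, \lambda_{-i}^R) = \lim_n \tilde{u}_i(\lambda_i^{R,n}, \lambda_{-i}^{R,n}) = \lim_n \tilde{u}_i^*(\lambda_{-i}^{R,n}) = \tilde{u}_i^*(\lambda_{-i}^R)$, so $\lambda_i^R \in b_i(\lambda_{-i}^R)$. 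Finally, under the extra hypotheses I would prove single-valuedness: if $C(\lambda_{-i}^R)$ is convex and $\tilde{u}_i(\cdot, \lambda_{-i}^R)$ is strictly quasiconcave, then two distinct maximizers would have their midpoint in $C(\lambda_{-i}^R)$ with strictly larger utility, a contradiction; hence $b_i$ is single-valued, and a single-valued upper hemicontinuous correspondence with compact values is a continuous function, which completes the argument.
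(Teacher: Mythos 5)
Your proof is correct, but there is nothing in the paper to compare it against: the paper does not prove this statement at all. It is imported verbatim as a known result, cited from Berge's \emph{Topological Spaces}, and is used only as a black box inside the proof of the continuity-of-best-response lemma. What you have supplied is the standard textbook proof of Berge's maximum theorem, correctly specialized to the metric (indeed Euclidean, compact) setting of the paper: Weierstrass for nonemptiness and compactness of the argmax; upper semicontinuity of the value function from upper hemicontinuity of $C$ plus compact values (sequence extraction); lower semicontinuity from lower hemicontinuity of $C$; upper hemicontinuity of $b_i$ via the closed-graph criterion, which is legitimate here because the codomain $S_i=[0,\mu_i^R]$ is compact; and single-valuedness from strict quasiconcavity together with convex values, after which a single-valued upper hemicontinuous compact-valued correspondence is a continuous function. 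Each of these steps is sound, and your reading of ``continuous'' as both upper and lower hemicontinuous is the intended one. The only remark worth making is that your argument proves a result slightly less general than Berge's original theorem (which holds in general topological spaces, where sequences do not suffice), but for the purpose this theorem serves in the paper the sequential version is exactly what is needed.
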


\begin{lemma}[\bit{Continuity of best response mapping}]\label{lemma:best_response_continuous}
For the CPR game $\Gamma$, under Assumptions (A1-A3), the best response mapping $b_i(\lambda_{-i}^R)$ is continuous for each $\lambda_{-i}^R \in S_{-i}$.
\end{lemma}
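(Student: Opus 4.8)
The plan is to derive continuity of $b_i$ directly from the Berge Maximum Theorem (Theorem~\ref{thm:berge}), using the uniqueness already established in Lemma~\ref{lemma:best_response_mapping}. The crucial structural simplification here is that the strategy set $S_i = [0, \ \mu_i^R]$ is a fixed compact interval that does \emph{not} depend on $\lambda_{-i}^R$. Hence the constraint correspondence appearing in Theorem~\ref{thm:berge} can be taken to be the constant correspondence $C(\lambda_{-i}^R) \equiv S_i$, which is automatically nonempty, compact-valued, convex-valued, and continuous at every $\lambda_{-i}^R \in S_{-i}$. This removes any need to verify continuity of a nontrivial feasibility map.

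First I would verify the remaining hypothesis of Theorem~\ref{thm:berge}, namely joint continuity of $\tilde u_i$ on $S_i \times S_{-i}$. From~\eqref{eq:7} we have $\tilde u_i = \mu_i^S r^S + \lambda_i^R f_i(x)$, where $f_i(x) = r^R(x)(1-p(x)) - h_i r^S$ and $x = \mu_T^S - \sum_{j} a_j \lambda_j^R$ is an affine (hence continuous) function of the joint strategy $(\lambda_i^R, \lambda_{-i}^R)$. Under Assumption (A1), $r^R$ is continuously differentiable and thus continuous, while under Assumption (A2)(i), $p(\lambda_i^R, \cdot)$ is continuous on all of $S_i$. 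Since sums and products of continuous functions are continuous, $\tilde u_i$ is jointly continuous.

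With these hypotheses in hand, the first conclusion of Theorem~\ref{thm:berge} gives that the best response correspondence $b_i(\lambda_{-i}^R)$ is upper hemicontinuous with nonempty, compact values. To upgrade upper hemicontinuity of a correspondence to continuity of a function, I would invoke Lemma~\ref{lemma:best_response_mapping}, which establishes that $b_i(\lambda_{-i}^R)$ is single-valued for every $\lambda_{-i}^R \in S_{-i}$. A single-valued, upper hemicontinuous correspondence taking values in a compact set is a continuous function, which yields the claim.

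The main subtlety to flag—rather than a true obstacle—is that one should \emph{not} attempt to route the argument through the last clause of Theorem~\ref{thm:berge}, which requires strict quasiconcavity of $\tilde u_i$ in $\lambda_i^R$. Indeed, $\tilde u_i$ is not globally strictly quasiconcave over $S_i$: on the interval $(\overline{\lambda}_i^R, \ \mu_i^R]$ one has $f_i = -h_i r^S < 0$ constant (see~\eqref{eq:11}), so $\tilde u_i$ is merely affine and decreasing there, and across the kink at $\overline{\lambda}_i^R$ the derivative of $p$ is discontinuous even though $p$ itself is continuous by (A2)(i). Separating continuity (used for upper hemicontinuity via Berge) from single-valuedness (supplied independently by Lemma~\ref{lemma:best_response_mapping}) is therefore the cleaner path, and it sidesteps the need to establish quasiconcavity on the full domain.
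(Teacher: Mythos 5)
Your proof is correct, and it takes a genuinely different and considerably shorter route than the paper's. The paper proves Lemma~\ref{lemma:best_response_continuous} by explicit case analysis: it introduces the threshold $z(\lambda_{-i}^R)$ in \eqref{eq:15} and the quantity $\hat{\lambda}^+$ in \eqref{eq:sup}, splits into $z \le 0$ versus $z > 0$ (with three sub-cases according to whether $\hat{\lambda}^+$ is $0$, interior, or $\mu_i^R$), invokes Berge's theorem only \emph{locally} on the sub-interval $\overline{S_i}$ where $f_i > 0$ and $f_i' < 0$ (so that $\tilde u_i$ is strictly concave there, by \eqref{eq:14}), and then patches continuity across the case boundaries with limiting arguments ($b_i \to 0^+$ as $\hat\lambda^+ \to 0^+$; the $\delta$--$\epsilon$ argument in Case 2.3). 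You instead apply Berge's theorem \emph{globally}: since $S_i = [0,\ \mu_i^R]$ does not depend on $\lambda_{-i}^R$, the constant correspondence $C \equiv S_i$ is trivially continuous, nonempty, compact- and convex-valued, so the first conclusion of Theorem~\ref{thm:berge} gives upper hemicontinuity of the argmax correspondence once joint continuity of $\tilde u_i$ is verified --- which holds because $r^R$ and $p$ enter only through the affine slackness parameter $x$, and $f_i$ is continuous in $x$ including at $x=0$, where both branches give $-h_i r^S$. Single-valuedness from Lemma~\ref{lemma:best_response_mapping} then upgrades upper hemicontinuity to continuity of the function $b_i$. This cleanly decouples continuity from uniqueness, concentrates the hard work where it already lives (Lemma~\ref{lemma:best_response_mapping}), and eliminates the case analysis entirely; what the paper's longer argument buys is an explicit regime-by-regime description of $b_i$ (mirroring Corollary~\ref{corollary1} and Fig.~\ref{fig:Picture3}), which aids intuition but is not logically needed for the lemma.

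One correction to your closing remark: your decision to avoid the strict-quasiconcavity clause of Theorem~\ref{thm:berge} is right, but the reason you give is not the actual failure mode --- a strictly decreasing affine segment is itself strictly quasiconcave, and a kink in $p$ does not preclude quasiconcavity. The genuine failure is that $\tilde u_i$ can dip and then rise. Indeed, take $\lambda_{-i}^R = 0$: then $x = \mu_T^S$ at $\lambda_i^R = 0$, so by (A1) $r^R(\mu_T^S)=0$ and $f_i(0,0) = -h_i r^S < 0$, giving $\partial \tilde u_i/\partial \lambda_i^R \to f_i(0,0) < 0$ as $\lambda_i^R \to 0^+$, so $\tilde u_i$ initially falls below $\tilde u_i(0,0) = \mu_i^S r^S$; yet by (A3), $\tilde u_i(\mu_i^R,0) = \mu_i^S r^S + \mu_i^R f_i(\mu_i^R,0) > \mu_i^S r^S$. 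Hence $\tilde u_i(\cdot, 0)$ is not even quasiconcave on $S_i$, which is the precise reason the last clause of Berge's theorem is unavailable.
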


\begin{proof}
Let $z(\lambda_{-i}^R): S_{-i} \mapsto [\frac{\mu_T^S - \sum_{j \in \mc{N}, j \neq i}a_j\mu_j^R}{a_i}, \  \frac{\mu_T^S}{a_i}]$ be defined by
\begin{equation}{\label{eq:15}}
    z(\lambda_{-i}^R) := \frac{\mu_T^S - \sum_{j \in \mc{N}, j \neq i}a_j\lambda_j^R}{a_i}.
\end{equation}
The mapping $z(\lambda_{-i}^R)$ represents an upper bound on the value of $\lambda_i^R$ above which the system constraint~\eqref{eq:3} is violated. Therefore, for each $\lambda_i^R \in [z(\lambda_{-i}^R), \infty)  \cap S_i$,  from~\eqref{eq:def-fi}, we get
\begin{align}\label{eq:16}
\!\!\!\! \!  f_i=-h_ir^S < 0, \text{and } f_i'=-r^Rp'\le 0, 
\end{align}

\noindent
{where the latter follows from monotonicity of $p$ (Assumption (A2)).}

The mapping $z(\lambda_{-i}^R)$ {defined in~\eqref{eq:15}} is continuous on $S_{-i}$ and linearly decreasing in $\lambda_j^R,$ {for every $ j \in \mc{N}\setminus\{i\}$.} Therefore, to establish the continuity of the best response mapping $b_i(\lambda_{-i}^R)$ on $S_{-i}$, it is sufficient to show that $b_i(\lambda_{-i}^R) = \phi(z(\lambda_{-i}^R))$, for some continuous function 
$\map{\phi}{ [\frac{\mu_T^S - \sum_{j \in \mc{N}, j \neq i}a_j\mu_j^R}{a_i}, \frac{\mu_T^S}{a_i}]}{[0, \  \mu_i^R]}$.
To this end, we show that for each fixed value of $z(\lambda_{-i}^R)$, $b_i$ is unique and varies continuously with $z(\lambda_{-i}^R)$. 

Let $\map{\hat{\lambda}^+}{S_{-i}}{[0, \ \mu_i^R]}$ be defined by
\begin{equation}\label{eq:sup}
\hat{\lambda}^+(\lambda_{-i}^R) = \begin{cases}
0, & \!\!\!\!\!\!\!\!\!\!\!\!\!\!\!\!\!\!\!\!\!\!\!\!\!\!\!\! \text{if } f_i(\lambda_i^R, \lambda_{-i}^R) \le 0, \forall \lambda_i^R \in S_i, \\
\sup\{\lambda_i^R \in S_i | \ f_i>0\}, & \text{otherwise}.
\end{cases}
\end{equation}
{The mapping $\hat{\lambda}^+(\lambda_{-i}^R)$, when non-zero, represents the maximal admissible review admission rate for player $i$, that yields her a positive incentive to review the tasks}. Fig.~\ref{fig:Picture3} shows the best response of player $i \in \mc{N}$ for the three possible cases of $\hat{\lambda}^+$.

\begin{figure}
\centering
\begin{subfigure}[b]{0.32\linewidth}
	    \centering
        \includegraphics[width=1\linewidth, height=1.1\linewidth, keepaspectratio]{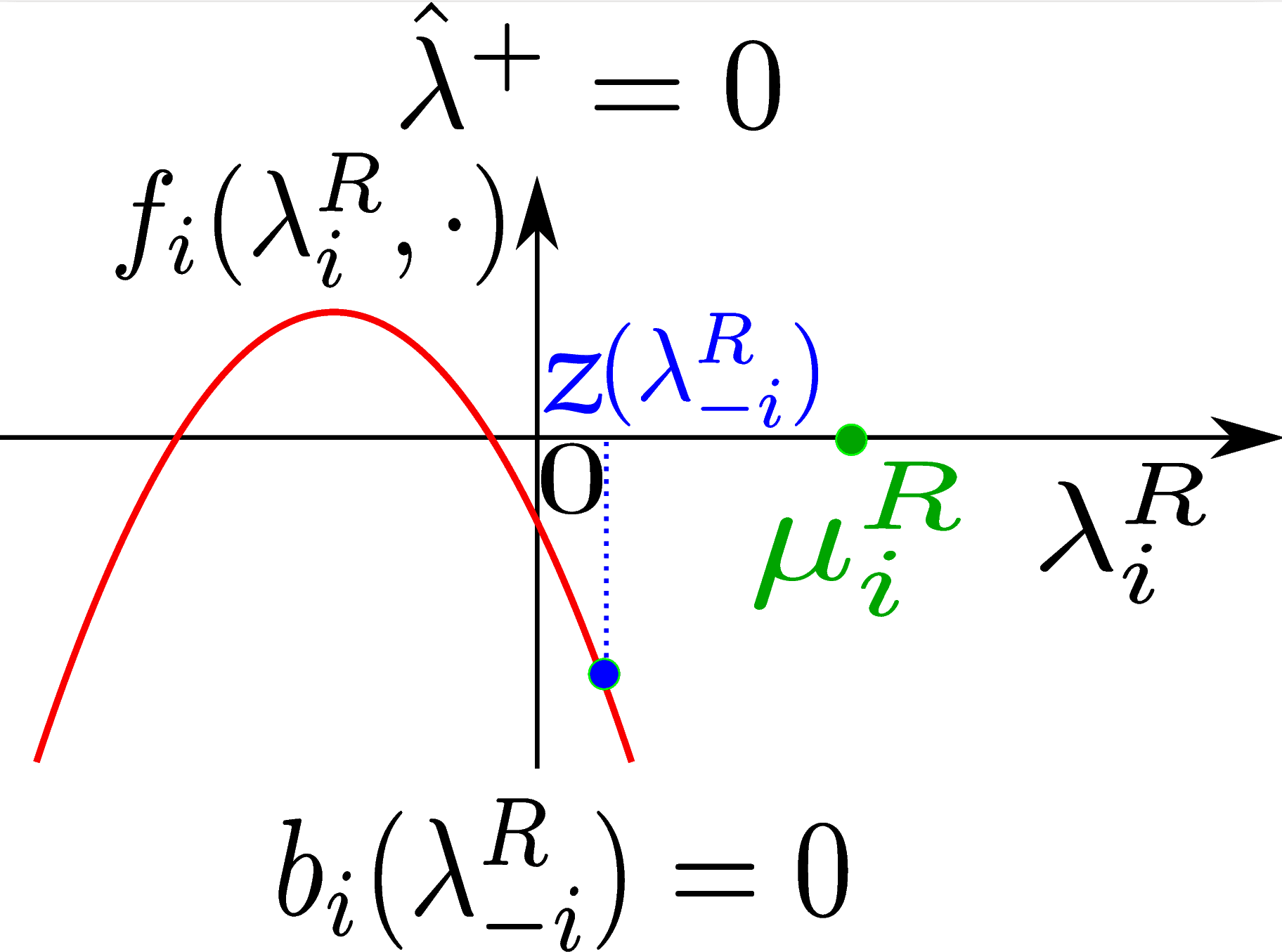}
        \caption{}
        \label{d}
    \end{subfigure}
\begin{subfigure}[b]{0.32\linewidth}
	    \centering
        \includegraphics[width=1\linewidth, height=1.1\linewidth, keepaspectratio]{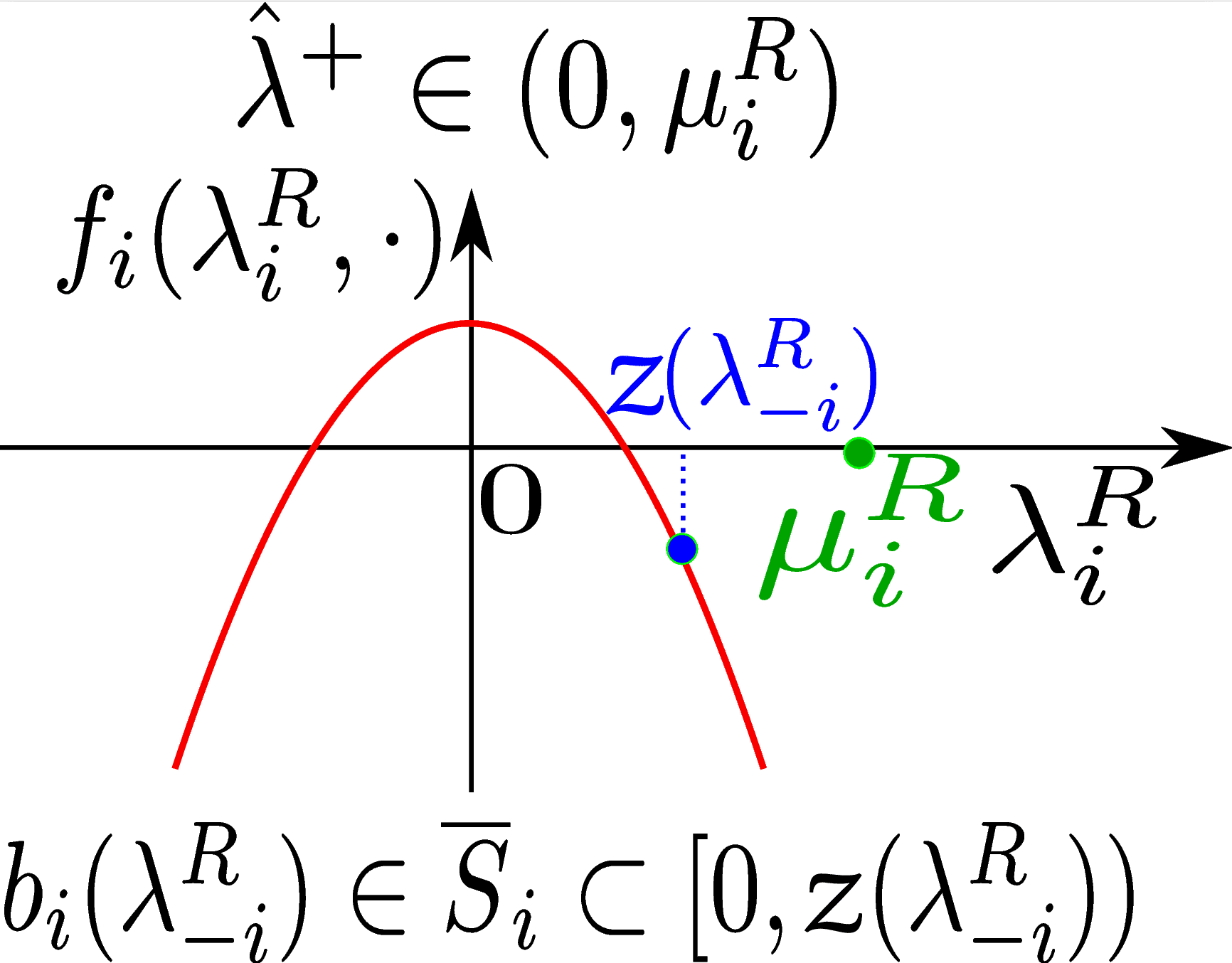}
        \caption{}
        \label{e}
\end{subfigure}
\begin{subfigure}[b]{0.32\linewidth}
	    \centering
        \includegraphics[width=1\linewidth, height=1.1\linewidth, keepaspectratio]{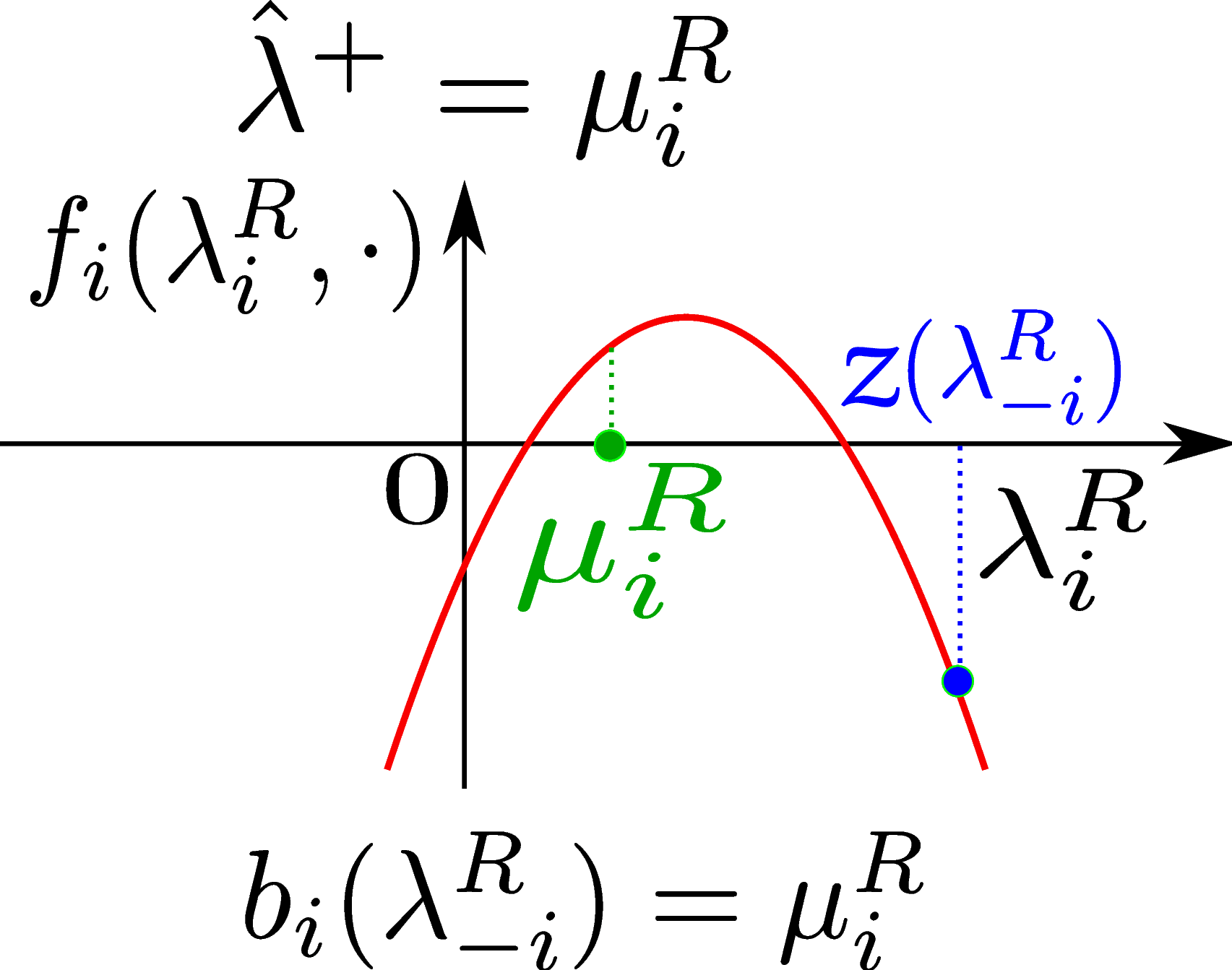}
        \caption{}
        \label{f}
\end{subfigure}
    \caption{\footnotesize {Best response of player $i$ with varying $\hat{\lambda}^+$. The red curve shows different possibilities for strictly concave incentive function $f_i(\lambda_i^R)$ w.r.t $\lambda_i^R \in S_i = [0, \mu_i^R]$, based on the value of $\lambda_{-i}^R$. In (a), $f_i <0$ and $f'_i<0$ for all $\lambda_i^R \in S_i$; in (b), there exists a subset of $S_i$ where $f_i > 0$ and $f'_i<0$; and in c) $f'_i\ge0$ for any $\lambda_{-i}^R$. At $z(\lambda_i^R)$ (represented by blue), $f_i <0$ and $f'_i<0$.
    a) For $\hat{\lambda}^+=0, \; b_i(\lambda_{-i}^R)=0 $; b) for $\hat{\lambda}^+ \in (0, \ \mu_i^R) \;, \; b_i(\lambda_{-i}^R) \in \overline{S_i}$;
    and c) for $\hat{\lambda}^+=\mu_i^R\; , \; b_i(\lambda_{-i}^R)=\mu_i^R$. }}
    \label{fig:Picture3}
\end{figure}

\noindent
{\bf Case 1:} {$z(\lambda_{-i}^R) \le 0$}. 
{From~\eqref{eq:16} and statement (i) of Corollary~\ref{corollary1}, $b_i(\lambda_{-i}^R)=0$ is the unique (continuous) best response for player $i$. }

\noindent
{\bf Case 2:} {$z(\lambda_{-i}^R) >0$. }
From~\eqref{eq:16}, $f_i <0$ and $f_i'<0$ for any $\lambda_i^R \ge z(\lambda_{-i}^R)$. Hence, $\hat{\lambda}^+ < z(\lambda_{-i}^R)$. {Now we consider three cases based on the value of $\hat{\lambda}^+$.}

\noindent
{
{\it Case 2.1:} $\hat \lambda^+=0$. In this case, $ f_i\leq0,$ for every $\lambda_i^R \in S_i $, and therefore, from statement (i) of Corollary~\ref{corollary1}, $b_i=0$ is the unique best response and continuity holds trivially. 
}

\noindent
{
{\it Case 2.2:} $\hat{\lambda}^+ \in (0, \  \mu_i^R)$. In this case, for any $\lambda_{-i}^R \in S_{-i}$, there exists an interval $\overline{S_i} \subset [0, \ \hat{\lambda}^+]$ such that $f_i>0$ and $f_i'<0,$ for every $\lambda_i^R \in \overline{S_i}$.} Here, $f_i'<0$ follows from the fact that the supremum in~\eqref{eq:sup} corresponds to the decreasing segment of $f_i$. From statement (ii) of Corollary~\ref{corollary1}, there exists a unique $b_i(\lambda_{-i}^R) \in \overline{S_i}$ that maximizes $\tilde u_i$. Application of Berge maximum theorem~\cite{berge1997topological}, yields the continuity of the unique maximizer. 

Since, $b_i < \hat \lambda^+$, it follows that if $\hat \lambda^+ \to 0^+$, then $b_i \to 0^+$. Hence, the continuity holds at $\hat \lambda^+ =0$. 


\noindent
{
{\it Case 2.3:} $\hat{\lambda}^+= \mu_i^R$. Since $\hat{\lambda}^+ < z(\lambda_{-i}^R)$, $z(\lambda_{-i}^R) \in (\mu_i^R, \frac{\mu_T^S}{a_i}]$.} If $f'(\mu_i^R, \lambda_{-i}^R)<0,$ then 
the continuity follows analogously to Case 2.2. 
Now consider the case $f'(\mu_i^R, \lambda_{-i}^R) = -\delta$, for $\delta >0$. 
Since $f_i$ is concave in $\lambda_i^R$ and its derivative is decreasing, there exists $\epsilon >0$ such that $f' <0$ for $\lambda_i \in (\mu_i^R-\epsilon, \mu_i^R]$. Since $f_i(\lambda_i^R, \cdot)$ is strictly concave in $\lambda_i^R$ (Lemma~\ref{lemma:incentive_function}), there exists at most one point $\lambda_i^R$, such that $f'(\lambda_i^R, \cdot)=0$. Therefore, in the limit $\delta \to 0^+$, $\epsilon \to 0^+$. Hence, in this 
limiting case $\overline{S_i} = (\mu_i^R-\epsilon, \ \mu_i^R]$, where $\epsilon \to 0^+$, and the best response $b_i(\lambda_{-i}^R) \in \overline{S_i} = (\mu_i^R-\epsilon, \ \mu_i^R]$ converges to $\mu_i^R$.

If  $f'(\mu_i^R, \lambda_{-i}^R) \geq 0$, then it follows from strict concavity of $f_i$ that $f'(\lambda_i^R, \lambda_{-i}^R)\geq 0,$  for every $ \lambda_i^R \in S_i$.
 Using statement (iii) of Corollary~\ref{corollary1}, $b_i(\lambda_{-i}^R) = \mu_i^R$ is the unique (continuous) best response.
 
 Note that when $z(\lambda_{-i}^R)= \frac{\mu_T^S}{a_i}$, i.e., when no other player reviews any task ($\lambda_{-i}^R=0$), from Assumption (A3), $f_i(\mu_i^R,0)>0$ and therefore $b_i(\lambda_{-i}^R)= \mu_i^R$.
 Hence, $b_i(\lambda_{-i}^R)$ is continuous for every $ z(\lambda_{-i}^R)$, and therefore, is continuous for $\lambda_{-i}^R \in S_{-i}$.
\end{proof}

\medskip
\textit{Proof of Theorem~\ref{thm:thm1}:}
To prove the existence of a PNE, define a mapping $M: S \mapsto S$ as follows:
\begin{equation}{\label{eq:17}}
    M(\lambda_1^R,\ \lambda_2^R,..., \ \lambda_N^R)=(b_1(\lambda_{-1}^R), \ b_2(\lambda_{-2}^R),...,\ b_N(\lambda_{-N}^R)). \\
\end{equation}

The mapping $M$ is unique (Lemma~\ref{lemma:best_response_mapping}) and continuous (Lemma~\ref{lemma:best_response_continuous}), and maps the compact convex set $S \ (S_i$ is convex and compact, $\forall i \in \mc{N})$ to itself. Hence, application of Brouwer's fixed point theorem~\cite[Appendix C]{basar1999dynamic} yields that there exists a strategy profile $\lambda^R=\{{\lambda_i^R}^*\}_{i \in \mc{N}} \in S$ which is invariant under the best response mapping and therefore is a PNE of the game. \oprocend 

\subsection{Proof of Corollary~\ref{corollary2} [PNE]:}\label{Corollary_appendix}
Since PNE is a best response which remains invariant under the best-response mapping $M$ given by~\eqref{eq:17}, Corollary~\ref{corollary2} is a direct consequence of Corollary~\ref{corollary1} with a simplification that $f_i'(\lambda_i^{R^*},\lambda_{-i}^{R^*}) < 0$ at PNE.
Therefore, to prove Corollary~\ref{corollary2}, it is sufficient to show statement (i), i.e.  $f_i'(\lambda_i^{R^*},\lambda_{-i}^{R^*}) < 0$ for any player $i \in \mc{N}$ at PNE, which we prove by contradiction. Let there exist a player $j$ such that $f_j'(\lambda_j^{R^*},\lambda_{-j}^{R^*}) \ge 0$ at PNE. 
From~\eqref{eq:first derivative}, it can be seen that the sign of $f_i'$ remains the same for all players at a PNE. Therefore, $f_j' \geq 0$ implies $f_i'\geq 0$ for all $i \in \mc{N}$ at that PNE. In such a case,~\eqref{eq:13} implies that the expected utility of each player with $\lambda_i^{R^*}>0$ (therefore, $f_i>0$) is increasing in $\lambda_i^R$ at that PNE, and therefore, each of these players can improve their expected utility by unilaterally increasing their review admission rate. Therefore $\lambda^{R^*}$ cannot be a PNE, which is a contradiction. Hence, $f_i'(\lambda_i^{R^*},\lambda_{-i}^{R^*}) < 0$ for any player $i \in \mc{N}$ at a PNE, and the corollary follows. \oprocend

\subsection{Proof of Proposition~\ref{proposition:Nash_Equilibrium} [Structure of PNE]} \label{structure_appendix}
 Let $\lambda_{k_1}^{R^*}$ and $\lambda_{k_2}^{R^*}$ be the review admission rates at a PNE for players $k_1$ and $k_2$, respectively,  with $h_{k_1} \le h_{k_2}$. By proving $a_{k_1}\lambda_{k_1}^{R^*} \ge a_{k_2}\lambda_{k_2}^{R^*}$, $\lambda_{k_1}^{R^*} \ge \lambda_{k_2}^{R^*}$ is established trivially since $a_{k_1} \le a_{k_2}$.
 We assume $a_{k_1}\lambda_{k_1}^{R^*} < {a_{k_2}}\lambda_{k_2}^{R^*}$ and prove the first statement by establishing a contradiction argument using two cases discussed below. Furthermore, the proof of the second statement is contained within Case 1 below.
 
 \medskip
\noindent
{\bf Case 1:} $\lambda_{k_1}^{R^*}=0$. 

From  statement (ii)  of Corollary~\ref{corollary2}, $f_{k_1} (\lambda_{k_1}^{R^*}, \lambda_{-k_1}^{R^*}) \le 0$. From~\eqref{eq:def-fi}, the incentives $f_{k_1}$ and $f_{k_2}$ for players $k_1$ and $k_2$ at a PNE satisfies:
\begin{equation*}
f_{k_2} = f_{k_1} + (h_{k_1}-h_{k_2})r^S\le0.
\end{equation*}
Therefore, utilizing statement (ii)  of Corollary~\ref{corollary2} again implies $\lambda_{k_2}^{R^*}=0$, which is a contradiction. This case also proves the second statement.

\medskip
\noindent
{\bf Case 2:} $\lambda_{k_1}^{R^*}>0$.

{By assumption, $a_{k_1}\lambda_{k_1}^{R^*} < a_{k_2}\lambda_{k_2}^{R^*}$,  from  statement (iii)  of Corollary~\ref{corollary2}, $\lambda_{i}^{R^*}$, where $i \in \{ k_1, k_2\}$, satisfy the implicit equation

\begin{align*}
    \lambda_{i}^{R^*}&=\min \Bigg\{-\frac{f_{i}(\lambda_{i}^{R^*}, \  \lambda_{-i}^{R^*})}{f'_{i}(\lambda_{i}^{R^*}, \ \lambda_{-i}^{R^*})}, \ \mu_{i}^R \Bigg \}.
\end{align*}

\medskip
We assume that $\lambda_{k_1}^{R^*} < \mu_{k_1}^R$, and therefore,  $\lambda_{k_1}^{R^*}=-\frac{f_{k_1}}{f'_{k_1}}$. Using~\eqref{eq:def-fi} and~\eqref{eq:first derivative}, we get 

\begin{align*}
    a_{k_2}\lambda_{k_2}^{R^*} &= \min \Bigg\{-a_{k_2}\frac{f_{k_2}}{f'_{k_2}}, \ a_{k_2}\mu_{k_2}^R \Bigg \} \\
    &\le -a_{k_2}\frac{f_{k_2}}{f'_{k_2}}= -a_{k_1}\frac{f_{k_1} + (h_{k_1}-h_{k_2})r^S}{f'_{k_1}}  \le a_{k_1}\lambda_{k_1}^{R^*},  
\end{align*}
which is a contradiction. Hence, if $\lambda_{k_1}^{R^*} < \mu_{k_1}^R$, then $a_{k_1}\lambda_{k_1}^{R^*} \ge a_{k_2}\lambda_{k_2}^{R^*}$ and $\lambda_{k_1}^{R^*} \ge \lambda_{k_2}^{R^*}$ for each $k_2 > k_1$.} \oprocend

\subsection{Proof of Theorem 2 [Uniqueness of PNE]}\label{Uniqueness_appendix}

Suppose that the CPR game $\Gamma$ has multiple PNEs. We define the support of a PNE as the total number of players with non-zero review admission rate. 
Let $\mr{PNE}_1 = {\lambda^1}= [{\lambda_1^1},\ {\lambda_2^1}, \ldots, \  {\lambda_N^1}]$ and $\mr{PNE}_2 = {\lambda^2}= [{\lambda_1^2},\ {\lambda_2^2}, \ldots, \ {\lambda_N^2}]$, be two different PNEs with distinct supports $m_1$ and $m_2$, respectively. For brevity of notation, we have removed the superscript $R$ from the two PNEs and replaced it by their unique identifier. Without loss of generality, let $m_2>m_1$. Let $x^1=\mu_T^S-\sum_{i=1}^{N}{a_i}\lambda_i^1$ and $x^2=\mu_T^S-\sum_{i=1}^{N}{a_i}\lambda_i^2$ be the slackness parameters at $\mr{PNE}_1$ and $\mr{PNE}_2$, respectively.

We prove the uniqueness of PNE using a six step process.

\medskip

\noindent
 \textbf{Step 1:}  \textit{We first show that if there exists two different PNEs with distinct supports $m_1$ and $m_2$ ($m_1 < m_2$), then $x^1<x^2$.}

If $m_1$ and $m_2$ are the supports of $\mr{PNE}_1$ and $\mr{PNE}_2$, respectively, then $\lambda_i^1=0$ and $\lambda_j^2=0$, for each $i>m_1$, and $j>m_2$, respectively (Proposition~\ref{proposition:Nash_Equilibrium}). Additionally, $\lambda_i^1>0$, and $\lambda_j^2>0$, for each $i \leq m_1$, and $j \leq m_2$. Hence, $m_2>m_1$ implies $\lambda_{m_2}^1=0$, while $\lambda_{m_2}^2>0$. 

From statement (i) of Corollary~\ref{corollary1}, $b_i=0$, if and only if $f_i\le 0$ and ${f_i}'< 0$ (equivalently $\frac{df_i}{dx}>0$) for all $\lambda_i^R \in S_i$. Therefore, $\lambda_{m_2}^1=0$ implies $f_{m_2}^1:= f_{m_2}(\lambda^1)\le0$ and $\frac{df_{m_2}}{dx}>0$ everywhere, while $\lambda_{m_2}^2>0$ implies $f_{m_2}^2:= f_{m_2}(\lambda^2)>0$. Since $f_{m_2}^2>0>f_{m_2}^1$ and $\frac{df_{m_2}}{dx}>0$ everywhere, it follows that $x^1<x^2$.
 
 \medskip
 
 \noindent
 {\textbf{Step 2:} \textit{We now show that $x^1>x^2$ using Steps 2-5, which is a contradiction to the result of Step 1}, and consequently $m_1=m_2$.}
 
From statement (iii) of Corollary~\ref{corollary2}, the review admission rate of any player $i$, $i\le m_1$, at $\mr{PNE}_k$, $k \in \{1,2\}$, satisfies

\begin{equation}{\label{eq:lambda}}
    \lambda_i^k=\min{\Bigg\{-\frac{f_i^k}{{f_i^k}'}, \ \mu_i^R \Bigg\}}.
\end{equation}

\medskip

\noindent 
  \textbf{Step 3:} \textit{We show that $f_{i}^2>f_{i}^1$ for any player $i$, $i\le m_1$.}
  
From~\eqref{eq:def-fi}, the incentives $f_i$ and $f_j$ for any two distinct players $i$ and $j$ with $j>i$ at a $\mr{PNE}_k$, $k \in \{1,2\}$ satisfies:
\begin{equation*}{\label{eq:numerator}}
    f_i^k-f_j^k= (h_j-h_i)r^S >0, \ \forall j>i.
\end{equation*}
Notice that the right hand side of above equation is independent of $\lambda_i^R$ and therefore, a constant for both PNEs.
Hence, for every $i< m_2$
\begin{equation*}
    f^1_{i}-f^1_{m_2} = f^2_{i}-f^2_{m_2}.
\end{equation*}
Therefore, $f_{m_2}^2>f_{m_2}^1$ implies $f_{i}^2>f_{i}^1$, for every $i \le m_1 < m_2$.

 \noindent 
 \textbf{Step 4:} \textit{We show that ${{f'}_i^1}<{{f'}_i^2}$, for every player $i$, $i\le m_1$.}
 
Recall that $f_i$ is strictly concave in $x$ (Lemma~\ref{lemma:incentive_function}). Therefore, $x^1<x^2$ (Step 1) implies $\frac{df_i^1}{dx}>\frac{df_i^2}{dx}$. 
Therefore, from~\eqref{eq:first derivative}, ${{f'}_i^1}<{{f'}_i^2}$,  for any player $i$, $i\le m_1$.

\medskip {

\noindent 
\textbf{Step 5:} \textit{We now show that $x^1 > x^2$, which is a contradiction to result of Step 1, and consequently $m_1=m_2$.}}

Since for all players $i$, $i \le m_1$, $f_{i}^2>f_{i}^1$ (Step 3) and $-{{f'}_i^1}>-{{f'}_i^2}$ (Step 4),~\eqref{eq:lambda} implies $\lambda_i^2 \ge \lambda_i^1$, for each $i\le m_1$. Therefore, $\sum_{i=1}^{N}{a_i}\lambda_i^2> \sum_{i=1}^{m_1}{a_i}\lambda_i^2 \ \ge \ \sum_{i=1}^{m_1}{a_i}\lambda_i^1=\sum_{i=1}^{N}{a_i}\lambda_i^1$, which implies $x^1>x^2$, {which is a contradiction to result of Step 1. Hence, $m_1 = m_2$
}
{

\noindent 
\textbf{Step 6:} \textit{We now show the value of slackness parameter $x$ at any PNE is unique}.

Steps 1 to 5 show that, at a PNE, the number of players with non-zero review admission rate are unique. Therefore, let $m$ be the identical support for $\mr{PNE}_1$ and $\mr{PNE}_2$. Without loss of generality, let $x_1>x_2$. 

Let $g_i : \mathbb{R} \mapsto  \mathbb{R}$, for $i \le m$, be  defined by

\begin{equation*}
    g_i(x)=-\frac{f(x)}{f'(x)}.
\end{equation*}

Differentiating $g_i(x)$ w.r.t $x$, we get

\begin{equation*}
    \frac{d g_i(x)}{d x}=\frac{{(\frac{d f_i(x)}{d x})}^2 - f_i(x)\frac{d^2f_i(x)}{d x^2}}{a_i {(\frac{d f_i(x)}{d x})}^2}.
\end{equation*}

Recall from statement (iii) of Corollary~\ref{corollary2} that players have non-zero review admission rate at PNE, if and only if, $f_i>0$ at PNE. Strict concavity of $f_i$ (Lemma~\ref{lemma:incentive_function}) implies $\frac{d g_i(x)}{d x}>0$. Consequently, at PNE, the review admission rate for any player $i$, $i\le m$, is increasing with $x$. Therefore, assumption $x^1 >x^2$ implies $\lambda_i^1 \ge \lambda_i^2$, for each player $i \le m$. Consequently,  $x^1 = \mu_T^S-\sum_{i=1}^{m}{a_i}\lambda_i^1 \le \mu_T^S-\sum_{i=1}^{m}{a_i}\lambda_i^2 = x^2$, which is a contradiction. Therefore, $x^1=x^2$.

\medskip

\noindent 
We now show the uniqueness of PNE. Steps 1 to 6 show that, at a PNE, the number of players with non-zero review admission rate and the slackness parameter $x$ are unique. Therefore, the first order conditions~\eqref{eq:lambda} give the unique review admission rate for each player $i$ for unique slack parameter $x$, thereby implying uniqueness of PNE.}
\oprocend 

\subsection{Proof of Lemma~\ref{lemma:best_response_non_increasing} [Non-increasing best response]}\label{non_increasing_appendix}

We prove this lemma by considering the three cases of the best response mapping in Lemma~\ref{lemma:best_response_mapping} (Appendix~\ref{Existence_appendix}):

\medskip
 \noindent \textbf{Case 1:} {$b_i=0$. Recall that $x =\mu_T^S-\sum_{i=1}^{N}{a_i}\lambda_i^R$. In this case, from statement (i) of Corollary~\ref{corollary1}, $f_i\le 0$ and  $f_i'<0$  (equivalently, $\frac{d f_i}{d x} >0)$,  for all $\lambda_i^R \in S_i$. Since $x$ can be re-written as $x=\mu_T^S-a_i\lambda_i^R -\sigma_i(\lambda_{-i}^R)$, therefore $\frac{d f_i}{d x} >0$ implies $\frac{\partial f_i}{\partial \sigma_i}<0$. Hence, with increase in $\sigma_i(\lambda_{-i}^R)$, $b_i=0$ remains the best response.}
 
 \medskip
  \noindent \textbf{Case 2:} $b_i=-\frac{f_i(b_i,\sigma_i(\lambda_{-i}^R))}{f_i'(b_i,\sigma_i(\lambda_{-i}^R))}$. In this case, from statement (ii) of Corollary~\ref{corollary1}, $b_i \in \overline{S_i}$ such that $f_i>0$ and  $f_i'<0$, for every $\lambda_i^R \in \overline{S_i}$. Thus, 
  \begin{equation}
      \frac{d b_i}{d \sigma_i}=\frac{-{f_i'}^2+f_i''f_i}{a_i{f_i'}^2} <0. 
  \end{equation}
  Hence, $b_i$ is strictly decreasing in $\sigma_i(\lambda_{-i}^R)$.
  
  \medskip
  \noindent \textbf{Case 3:}
  $b_i=\mu_i^R$. Since $b_i \in S_i=[0, \ \mu_i^R]$, $b_i$ either decreases or remains constant with increase in $\sigma_i(\lambda_{-i}^R)$.   \oprocend 

\subsection{Proof of Theorem~\ref{thm:thm3} [Analytic bounds on PNE Inefficiency]}\label{bound_appendix}

We first establish the analytic upper bound on PoA, followed by upper bounds on $\eta_{TRI}$ and $\eta_{LI}$.  

Let $\mathcal{G}$ be the family of CPR games parameterized by the ratios of the maximum service and review admission rates of each player $i \in \mathcal{N}$. Therefore, the CPR game $\Gamma \in \mathcal{G}$, with the corresponding ratio for player $i$  given by $h_i$. Define a set of homogeneous CPR games $\mathcal{G}^H \subset \mathcal{G}$, in which each player has a constant ratio $\frac{\{\mu_i^S\}^H}{\{\mu_i^R\}^H} =: h$, and $\min_i\{\{\mu_i^R\}^H\} \ge \frac{\{\mu_T^S\}^H}{N(1+h)}$. The superscript $H$ is used to distinguish maximum service and review admission rates of the homogeneous game $\Gamma^H$ from the CPR game $\Gamma$.

For any CPR game in $\mathcal{G}$, PoA is given by:

\begin{equation}{\label{poa_eq}}
     PoA=\frac{(\Psi)_{SW}}{(\Psi)_{PNE}} = \frac{[\sum_{i=1}^{N}\mu_i^Sr^S+ \sum_{i=1}^{N}\lambda_i^Rf_i(x)]_{SW}}{[\sum_{i=1}^{N}\mu_i^Sr^S+ \sum_{i=1}^{N}\lambda_i^Rf_i(x)]_{PNE}}.
\end{equation}

We now provide an analytic upper bound on the PoA for the CPR game $\Gamma$ using following Lemmas.

\begin{lemma}[\bit{PNE solution for homogeneous CPR game}]\label{lemma:homogeneous PoA_solution}
For any homogeneous CPR game $\Gamma^H \in \mathcal{G}^{H}$, such that for each player $i \in \mathcal{N}$, $\frac{\{\mu_i^S\}^H}{\{\mu_i^R\}^H} = h$, and $\min_i\{\{\mu_i^R\}^H\} \ge \frac{\{\mu_T^S\}^H}{N(1+h)}$, each player participates in the review process with equal review admission rate $\lambda_i^H=\lambda_H$ at PNE. Let $\lambda_T^H$ be the total review admission rate at PNE for $\Gamma^H$. The unique PNE solution is given by $\lambda_H = \frac{\lambda_T^H}{N}$, where $\lambda_T^H = \frac{f(x)}{(1+h)\frac{d f}{d x}}$ and $x=\{\mu_T^S\}^H- (1+h)\lambda_T^H$. 
\end{lemma}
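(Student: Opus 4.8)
The plan is to exploit homogeneity to collapse the equilibrium characterization to a single scalar condition, and then inherit existence and uniqueness from the results already proved. Since every player shares the common ratio $h_i = h$, the incentive functions coincide, $f_i(x) = f(x) := r^R(x)(1-p(x)) - h r^S$, and $a_i = 1+h$ for all $i \in \mc{N}$; I suppress the superscript $H$ on the rates below. By Theorems~\ref{thm:thm1} and~\ref{thm:thm2}, $\Gamma^H$ admits a unique PNE $\lambda^{R^*}$, so it suffices to characterize it and show it is symmetric. Throughout I write $x^* = \mu_T^S - (1+h)\sum_{i}\lambda_i^{R^*}$.

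First I would show that every player participates. Because $f_i \equiv f$, statement (ii) of Corollary~\ref{corollary2} makes participation all-or-nothing: $f(x^*)\le 0$ forces $\lambda_i^{R^*}=0$ for every $i$, whereas $f(x^*)>0$ forces $\lambda_i^{R^*}>0$ for every $i$. The all-zero profile cannot be a PNE, since by Assumption (A3) we have $f_i(\mu_i^R,0)>0$, so a player facing $\lambda_{-i}^R=0$ strictly raises her utility by reviewing at rate $\mu_i^R$. Hence $f(x^*)>0$, and statement (iii) of Corollary~\ref{corollary2} gives $\lambda_i^{R^*}=\min\{\tilde\lambda,\ \mu_i^R\}$ with the common interior target $\tilde\lambda := \frac{f(x^*)}{(1+h)\frac{df}{dx}(x^*)}$. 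I would also record that $f(x)\le 0$ for all $x\le 0$, because there $p=1$ and so $f = -h r^S \le 0$; thus $f(x^*)>0$ forces $x^*>0$, i.e. $\sum_i \lambda_i^{R^*} < \frac{\mu_T^S}{1+h}$.

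The main obstacle, and the step where the hypothesis $\min_i \mu_i^R \ge \frac{\mu_T^S}{N(1+h)}$ is essential, is to rule out truncation and thereby establish symmetry. I would argue by contradiction: suppose some player $j$ is capped, so $\lambda_j^{R^*}=\mu_j^R\le\tilde\lambda$, whence $\tilde\lambda \ge \mu_j^R \ge \min_i \mu_i^R \ge \frac{\mu_T^S}{N(1+h)}$. Then every interior player reviews at $\tilde\lambda \ge \frac{\mu_T^S}{N(1+h)}$ while every capped player $k$ reviews at $\mu_k^R \ge \min_i\mu_i^R \ge \frac{\mu_T^S}{N(1+h)}$, so $\lambda_i^{R^*} \ge \frac{\mu_T^S}{N(1+h)}$ for \emph{every} $i$. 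Summing over the $N$ players gives $\sum_i \lambda_i^{R^*} \ge \frac{\mu_T^S}{1+h}$, contradicting the strict bound $\sum_i \lambda_i^{R^*} < \frac{\mu_T^S}{1+h}$ from the previous step. Hence no player is capped, and $\lambda_i^{R^*} = \tilde\lambda =: \lambda_H$ for all $i$, which proves the symmetric participation claim.

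Finally, the characterization follows from the now-common first-order condition: each player satisfies $f(x^*) = (1+h)\lambda_H\,\frac{df}{dx}(x^*)$, i.e. $\lambda_H = \frac{f(x^*)}{(1+h)\frac{df}{dx}(x^*)}$, and since the $N$ rates are equal we obtain $\lambda_T^H = N\lambda_H$ with $x^* = \mu_T^S - (1+h)\lambda_T^H$, which is the stated implicit form; uniqueness of $x^*$ (hence of $\lambda_H$) is inherited directly from Theorem~\ref{thm:thm2}. Beyond the counting argument above, I expect the only delicate points to be the boundary and continuity bookkeeping, but these are already subsumed in Corollary~\ref{corollary2}, so no genuinely new machinery should be required.
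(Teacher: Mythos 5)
Your first two steps reproduce the paper's argument essentially verbatim: because the incentive functions coincide, participation is all-or-nothing by statement (ii) of Corollary~\ref{corollary2}, the all-zero profile is excluded by Assumption (A3), and the hypothesis $\min_i\{\{\mu_i^R\}^H\}\ge \frac{\{\mu_T^S\}^H}{N(1+h)}$ rules out capped players by exactly the counting contradiction you give (the paper phrases it as: a nonempty capped set $\mathcal{D}$ forces $\lambda_T^H \ge N\min_i\{\{\mu_i^R\}^H\} \ge \frac{\{\mu_T^S\}^H}{1+h}$, hence $x\le 0$, contradicting $x>0$ at PNE). Up to the symmetry conclusion $\lambda_i^H=\lambda_H$, your proposal is sound and matches the paper.

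The problem is your final step, and it is not cosmetic. The unilateral-deviation first-order condition you invoke (statement (iii) of Corollary~\ref{corollary2}) gives $\lambda_H=\frac{f(x^*)}{(1+h)\frac{df}{dx}(x^*)}$ for the \emph{individual} rate, hence $\lambda_T^H=N\lambda_H=\frac{Nf(x^*)}{(1+h)\frac{df}{dx}(x^*)}$. The lemma, however, asserts $\lambda_T^H=\frac{f(x)}{(1+h)\frac{df}{dx}}$ for the \emph{total} rate; the two differ by a factor of $N$, so your closing claim that this ``is the stated implicit form'' is false, and as written your argument does not establish the statement. The paper reaches the stated equation by a genuinely different computation: it substitutes the symmetric profile $\lambda_i^H=\lambda_T^H/N$ into $\tilde u_i$ and sets $\frac{\partial \tilde u_i}{\partial \lambda_T^H}=0$, i.e., the contemplated deviation moves the aggregate rather than a single player's rate. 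That choice is what makes the PNE total solve the same equation as the social-welfare maximizer, which is precisely what Lemma~\ref{lemma:homogeneous PoA} (PoA $=1$ for the homogeneous game) and the downstream bounds in Theorem~\ref{thm:thm3} rely on. You have in fact surfaced a real tension: your derivation is the one consistent with Corollary~\ref{corollary2} (the standard Nash condition, deviation in $\lambda_i$ with $\lambda_{-i}$ fixed), and it contradicts the lemma as stated; the paper's derivation yields the lemma but is not a unilateral-deviation condition. A complete answer must either reproduce the paper's aggregate-derivative computation, or explicitly flag that the stated formula disagrees with Corollary~\ref{corollary2} by a factor of $N$ --- silently identifying the two expressions is the gap.
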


\begin{proof}
For the homogeneous CPR game $\Gamma^H$, each player has equal incentive $f(x)$ to review the tasks. If $f(x) \le 0$ at PNE, all players have $\lambda_i^H=0$ (statement (ii) of Corollary \ref{corollary2}) which contradicts assumption (A3). Hence, at PNE, each players has $\lambda_i^H >0$. 

Let $\min_i\{\{\mu_i^R\}^H\} \ge \frac{\{\mu_T^S\}^H}{N(1+h)}$ for $\Gamma^H$. At PNE, $x>0$. Let $\mathcal{D} \subseteq \mathcal{N}$ be a non-empty set of player indices such that for any $i \in \mathcal{D}$, $\{\mu_i^R\}^H \le -\frac{f(x)}{f'(x)}$. At PNE,
\begin{align*}
    \lambda_T^H &= \sum_{i \in \mathcal{D}}\{\mu_i^R\}^H  + \sum_{i \in \mathcal{N} \setminus \mathcal{D}} \frac{-f(x)}{f'(x)} \\
    &\ge N\min_i\{\{\mu_i^R\}^H \} \\
    &\ge \frac{\{\mu_T^S\}^H}{(1+h)}.
\end{align*}
Therefore, at PNE, 
\begin{align*}
    x&=\{\mu_T^S\}^H- (1+h)\lambda_T^H \\
    &\le \{\mu_T^S\}^H- (1+h)N \min_i\{\{\mu_i^R\}^H \}
    \le 0,
\end{align*}
which is a contradiction. Hence, $\mathcal{D}$ is an empty set and each player has equal review admission rate at PNE, given by $\lambda_i^H= \lambda_H = \frac{\lambda_T^H}{N}$. Hence, each player being a maximizer of their expected utility maximizes:

\begin{equation}
    \tilde{u}_i= \{\mu_i^S\}^Hr^S + \frac{\lambda_T^H}{N}f(x),
\end{equation}
 where $x=\{\mu_T^S\}^H- (1+h)\lambda_T^H$. Setting $\frac{\partial \tilde{u}_i}{\partial \lambda_T^H}=0$, we get $\lambda_T^H = \frac{f(x)}{(1+h)\frac{d f}{d x}}$.
\end{proof}

\begin{lemma}[\bit{PoA=1 for homogeneous CPR game}]\label{lemma:homogeneous PoA}
For any homogeneous CPR game $\Gamma^H \in \mathcal{G}^{H}$, such that for each player $i \in \mathcal{N}$, $\frac{\{\mu_i^S\}^H}{\{\mu_i^R\}^H} = h$, and $\min_i\{\{\mu_i^R\}^H\} \ge \frac{\{\mu_T^S\}^H}{N(1+h)}$, PoA=1. 
\end{lemma}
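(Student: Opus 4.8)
The plan is to show that for a homogeneous game the social welfare function collapses to a function of the \emph{total} review admission rate alone, and then to verify that its maximizer coincides with the total review rate at the PNE characterized in Lemma~\ref{lemma:homogeneous PoA_solution}; distribution-invariance of $\Psi$ then forces $(\Psi)_{SW}=(\Psi)_{PNE}$.

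First I would specialize the social welfare expression~\eqref{eq:8}. Since $h_i=h$ for every player, $\sum_i h_i\lambda_i^R = h\lambda_T^H$, and the slackness parameter is $x=\{\mu_T^S\}^H-(1+h)\lambda_T^H$, which depends on the strategy profile only through $\lambda_T^H$. Hence
\begin{equation*}
\Psi = \{\mu_T^S\}^H r^S + \lambda_T^H f(x), \qquad x=\{\mu_T^S\}^H-(1+h)\lambda_T^H,
\end{equation*}
is a function of the scalar $\lambda_T^H$ alone; in particular $\Psi$ is invariant to how a fixed total $\lambda_T^H$ is distributed among the players.

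Next I would locate the social welfare optimum through the first-order condition $d\Psi/d\lambda_T^H=0$. Using $dx/d\lambda_T^H=-(1+h)$, this reads $f(x)-(1+h)\lambda_T^H\frac{df}{dx}=0$, i.e. $\lambda_T^H=\frac{f(x)}{(1+h)\frac{df}{dx}}$, which is exactly the PNE total obtained in Lemma~\ref{lemma:homogeneous PoA_solution}. To certify that this critical point is the global maximizer I would invoke strict concavity: differentiating once more and using $\frac{df}{dx}>0$ at the relevant $x$ (statement (i) of Corollary~\ref{corollary2}) together with $\frac{d^2 f}{dx^2}<0$ (Lemma~\ref{lemma:incentive_function}) gives $d^2\Psi/d(\lambda_T^H)^2<0$, so $\Psi$ is strictly concave in $\lambda_T^H$ and its maximizer is unique. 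Feasibility of this total for the social planner is automatic, since the PNE itself realizes it with the equal, admissible allocation $\lambda_i^H=\lambda_T^H/N$ under the hypothesis $\min_i\{\mu_i^R\}^H\ge \{\mu_T^S\}^H/(N(1+h))$.

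Finally, because $\Psi$ depends only on $\lambda_T^H$ and both the social welfare solution and the PNE attain the same optimal total $\lambda_T^H$, they yield the same value of $\Psi$; therefore $(\Psi)_{SW}=(\Psi)_{PNE}$ and $PoA=1$. The crux of the argument—and the one point needing care—is recognizing that homogeneity removes any asymmetry between a player's private marginal utility and her marginal contribution to the social welfare, so that the PNE stationarity condition and the social welfare first-order condition become literally the same scalar equation; the distribution-invariance of $\Psi$ then makes the equality of values immediate.
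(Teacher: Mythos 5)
Your overall route is the same as the paper's: collapse $\Psi$ to a scalar function of the total rate, match the social-welfare first-order condition with the PNE condition of Lemma~\ref{lemma:homogeneous PoA_solution}, and conclude by distribution-invariance. However, your certification of \emph{global} optimality has a genuine gap. You claim $\Psi$ is strictly concave in $\lambda_T^H$, citing $\frac{df}{dx}>0$ ``at the relevant $x$''; but statement (i) of Corollary~\ref{corollary2} guarantees $\frac{df}{dx}>0$ only \emph{at the PNE point}, not on the whole feasible interval, and in fact $\Psi$ is not globally concave. Writing $a=1+h$, one has
\begin{equation*}
\frac{d^2\Psi}{d(\lambda_T^H)^2} \;=\; -2a\,\frac{df}{dx} \;+\; a^2\lambda_T^H\,\frac{d^2f}{dx^2},
\end{equation*}
and at $\lambda_T^H=0$ (where $x=\{\mu_T^S\}^H>\overline{x}$, $\overline{x}$ being the maximizer of $f$) the second term vanishes while $\frac{df}{dx}<0$, so $\frac{d^2\Psi}{d(\lambda_T^H)^2}>0$: $\Psi$ is strictly \emph{convex} there. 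Moreover $\frac{d\Psi}{d\lambda_T^H}\big|_{\lambda_T^H=0}=f(\{\mu_T^S\}^H)=-hr^S<0$ (since $r^R(\{\mu_T^S\}^H)=0$ by (A1)), so $\Psi$ initially decreases and $\lambda_T^H=0$ is itself a local maximizer on the feasible set. A critical point at which a non-concave function is locally concave need not be a global maximizer, so your final step does not follow as written.

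The conclusion is recoverable, and this is precisely where the paper's proof does extra work that yours omits: it treats the two regions $\frac{df}{dx}\le 0$ and $\frac{df}{dx}>0$ separately. On the subinterval where $\frac{df}{dx}>0$ (equivalently $x<\overline{x}$), the displayed second derivative is negative, $\Psi$ is strictly concave, and its unique critical point is the PNE total. On the complementary region, any interior critical point satisfies $f(x)=a\lambda_T^H\frac{df}{dx}\le 0$, hence yields $\Psi\le\{\mu_T^S\}^H r^S$; the same bound holds for $x\le 0$, where $f=-hr^S<0$, and at the endpoint $\lambda_T^H=0$. Since at the PNE every player reviews with positive rate and hence $f>0$ there (Corollary~\ref{corollary2}, statement (iii)), the PNE value strictly exceeds $\{\mu_T^S\}^H r^S$, so no point outside the concave region can compete, and the PNE total is the global maximizer. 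Adding this comparison across regions closes the gap; without it, the equality $(\Psi)_{SW}=(\Psi)_{PNE}$ is not established.
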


\begin{proof}
For homogeneous CPR game $\Gamma^H$,
social welfare function $\Psi^H$ in \eqref{eq:8} only depends on $\lambda_T^R$ and is given by:

\begin{equation}
    \Psi^H = \{\mu_T^S\}^Hr^S + \lambda_T^R f(x), 
\end{equation}
where $x=\{\mu_T^S\}^H- (1+h)\lambda_T^R$, and $f(x)$ is the uniform incentive function for each player. Note that  $\frac{d \Psi^H}{d \lambda_T^R} > 0$ when $\frac{d f}{d x} \le 0$, and $\frac{d^2 \Psi^H}{d {\lambda_T^R}^2}>0$ in the interval where $\frac{d f}{d x} > 0$.  It is easy to show that $\Psi^H$ is maximized by any ${\lambda_T^R}$ satisfying $\lambda_T^R =\frac{f(x)}{(1+h)\frac{d f}{d x}}$ obtained by setting $\frac{d \Psi^H}{d \lambda_T^R}=0$, and $\frac{d f}{d x} > 0$ at the maximizer. 

Let $\lambda_T^H$ be the total review admission rate at PNE for $\Gamma^H$. 
The unique PNE satisfies   $\lambda_T^H= \frac{f(x)}{(1+h)\frac{d f}{d x}}$ (Lemma \ref{lemma:homogeneous PoA_solution}), and hence, maximizes social welfare utility resulting in PoA$=1$.
\end{proof}

Corresponding to the CPR game $\Gamma$, construct a homogeneous game $\Gamma^H_N \in \mathcal{G}^H$ with
$\{\mu_i^S\}^H=\mu_i^S$ and $\{\mu_i^R\}^H= \frac{\mu_i^S}{h_N}$, for each player $i\in \mathcal{N}$. Note that for homogeneous players in $\Gamma^H_N$, $\frac{\{\mu_i^S\}^H}{\{\mu_i^R\}^H}=h=h_N$ , and  $\sum_{i=1}^{N}\{\mu_i^S\}^H= \mu_T^S$. Furthermore, the assumption $\min_i\{\mu_i^S\} > \frac{\mu_T^Sh_N}{N(1+h_N)}$ implies $\min_i\{\{\mu_i^R\}^H\}> \frac{\{\mu_T^S\}^H}{N(1+h)}$. Hence, $PoA=1$ for $\Gamma^H_N$ (Lemma~\ref{lemma:homogeneous PoA}).

To obtain analytic bounds on PoA, we now compute a lower bound on the social utility obtained at the unique PNE $\Psi^{\Gamma}_{PNE}$ for the CPR game $\Gamma$. In Lemma~\ref{lemma:homogeneous vs heterogeneous}, we show that the 
social utility obtained at the PNE $\Psi^H_{PNE}$ for the homogeneous game $\Gamma_N^H$ lower bounds $\Psi^{\Gamma}_{PNE}$. For any $x \in [0, \ \mu_T^S]$, homogeneous players with ratio $h_N$ in  $\Gamma_N^H$ have a lower cumulative incentive ($\sum f$) to review tasks than players in $\Gamma$, and therefore, have a lower social utility at PNE, i.e., $\Psi^{\Gamma}_{PNE} \ge \Psi^H_{PNE}$. We further lower bound $\Psi^{\Gamma}_{PNE}$ by computing a lower bound on $\Psi^H_{PNE}$.

\begin{lemma}[\bit{Lower bound for social welfare at PNE}]\label{lemma:homogeneous vs heterogeneous}
Let $\Gamma^H_N$ be a homogeneous game corresponding to CPR game $\Gamma$ with each player $i\in \mathcal{N}$ having $\{\mu_i^S\}^H=\mu_i^S$ and $\{\mu_i^R\}^H=\frac{\mu_i^S}{h_N}$. Let $\Psi_{PNE}^{\Gamma}$ and $\Psi_{PNE}^H$ be the social welfare functions for $\Gamma$ and $\Gamma^H_N$, respectively, evaluated at their unique PNEs. Then  $\Psi^{\Gamma}_{PNE} \ge \Psi^H_{PNE} \ge \mu_T^Sr^S +\frac{\mu_T^S- \overline{x}}{a_N}f_N(\overline{x})$, where $\overline{x}$ is the unique maximizer of $f_i$, i.e. $\frac{d f_i}{d x}(\overline{x})=0$.
\end{lemma}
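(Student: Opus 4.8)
The plan is to prove the two inequalities separately, working throughout with the \emph{review surplus}, i.e.\ the part of the social welfare in excess of the constant $\mu_T^S r^S$. Write $g(x):=r^R(x)\bigl(1-p(x)\bigr)$, so that $f_i(x)=g(x)-h_i r^S$ and, importantly, $\tfrac{d f_i}{d x}=g'(x)$ is common to all players. For the homogeneous game $\Gamma^H_N$ every player has ratio $h_N$, so $\Psi^H=\mu_T^S r^S+\lambda_T^R f_N(x)$ and its feasible pairs obey $\lambda_T^R=\tfrac{\mu_T^S-x}{a_N}$; accordingly I set $\psi(x):=\tfrac{\mu_T^S-x}{a_N}f_N(x)$, so that $\Psi^H=\mu_T^S r^S+\psi(x)$.

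I would dispatch the second inequality first, as it is the easy half. By Lemmas~\ref{lemma:homogeneous PoA_solution}--\ref{lemma:homogeneous PoA}, $\Gamma^H_N$ has $\mathrm{PoA}=1$, hence its PNE maximizes $\Psi^H$ and $\Psi^H_{PNE}=\mu_T^S r^S+\max_{x\in[0,\mu_T^S]}\psi(x)$. Since $\overline{x}$ is interior and $\tfrac{\mu_T^S-\overline{x}}{a_N}$ is an admissible total review rate (it lies in $[0,\{\mu_T^R\}^H]$), the value $\overline{x}$ is a feasible argument, so $\max_x\psi(x)\ge\psi(\overline{x})$, which is precisely $\Psi^H_{PNE}\ge\mu_T^S r^S+\tfrac{\mu_T^S-\overline{x}}{a_N}f_N(\overline{x})$.

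For the first inequality I would start from $\Psi^{\Gamma}_{PNE}=\mu_T^S r^S+\sum_i\lambda_i^{R^*}f_i(x^*)$ and substitute $f_i=f_N+(h_N-h_i)r^S$ together with the identity $\lambda_T^{R^*}=\tfrac{\mu_T^S-x^*}{a_N}+\tfrac1{a_N}\sum_i(h_N-h_i)\lambda_i^{R^*}$, which follows from $x^*=\mu_T^S-\sum_i a_i\lambda_i^{R^*}$. A short computation gives the decomposition
\begin{equation*}
\Psi^{\Gamma}_{PNE}=\mu_T^S r^S+\psi(x^*)+\frac{\sum_i(h_N-h_i)\lambda_i^{R^*}}{a_N}\bigl(g(x^*)+r^S\bigr),
\end{equation*}
whose last term is a nonnegative \emph{heterogeneity bonus} because the players are ordered with $h_i\le h_N$. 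It thus remains to show that this bonus dominates the concavity gap $\psi(x^H)-\psi(x^*)$, where $x^H$ is the slackness at the PNE of $\Gamma^H_N$. As an intermediate step I would establish $x^*\le x^H$: writing each player's best response to a fixed slackness $x$ as $\beta_i(x)=\min\{f_i(x)/(a_i g'(x)),\,\mu_i^R\}$, the pointwise domination $f_i\ge f_N$ together with $a_i\mu_i^R\ge a_N\{\mu_i^R\}^H$ makes the aggregate effective-review map $R^{\Gamma}(x):=\sum_i a_i\beta_i(x)$ dominate its homogeneous counterpart $R^{H}(x)$; both maps are nondecreasing on $[0,\overline{x}]$ (using $g''<0$ from Lemma~\ref{lemma:incentive_function}), so the PNE slacknesses, being the roots of the strictly decreasing function $x\mapsto\mu_T^S-R(x)-x$, satisfy $x^*\le x^H$.

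The hard part is the final comparison. Using strict concavity of $\psi$ (equivalently of $\Psi$ in $x$, Lemma~\ref{lemma:incentive_function}) and $x^*\le x^H$, where $x^H$ maximizes $\psi$, I would bound $\psi(x^H)-\psi(x^*)\le\psi'(x^*)(x^H-x^*)$ and show, via the equilibrium relations $a_i\lambda_i^{R^*}=f_i(x^*)/g'(x^*)$ of Corollary~\ref{corollary2}, that the heterogeneity bonus is at least this quantity; combining the two then gives $\Psi^{\Gamma}_{PNE}\ge\mu_T^S r^S+\psi(x^H)=\Psi^H_{PNE}$. I expect this last estimate — reconciling the equilibrium slackness $x^*$ of the heterogeneous game with the optimal homogeneous slackness $x^H$ — to be the main obstacle, because every naive single-slackness comparison only yields $\Psi^{\Gamma}_{PNE}\ge\mu_T^S r^S+\psi(x^*)$, which falls short of $\Psi^H_{PNE}=\mu_T^S r^S+\max_x\psi(x)$; the crux is precisely to show that the surplus generated by the more capable reviewers in $\Gamma$ outweighs the loss from operating away from the homogeneous optimum.
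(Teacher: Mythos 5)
Your setup is largely sound: the decomposition $\Psi^{\Gamma}_{PNE}=\mu_T^S r^S+\psi(x^*)+\frac{\sum_i(h_N-h_i)\lambda_i^{R^*}}{a_N}\bigl(g(x^*)+r^S\bigr)$ is a correct identity; your fixed-point comparison giving $x^*\le x^H$ is a legitimate (and arguably cleaner) alternative to the paper's contradiction argument (its Step 1), modulo handling dropped-out players where $f_i\le 0$; and your proof of the second inequality (PoA $=1$ for $\Gamma^H_N$ plus feasibility of $\lambda_T^R=\frac{\mu_T^S-\overline{x}}{a_N}$) coincides with the paper's Step 4. The problem is that the first inequality is never actually proved: the step on which everything hinges --- that the heterogeneity bonus dominates the gap $\psi(x^H)-\psi(x^*)$ --- is only announced. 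The route you sketch for it is not obviously workable: the gradient bound $\psi(x^H)-\psi(x^*)\le\psi'(x^*)(x^H-x^*)$ replaces the quantity you need by a \emph{larger} one, so you would be committed to the strictly stronger estimate that the bonus exceeds $\psi'(x^*)(x^H-x^*)$, which requires a quantitative lower bound on the bonus and an upper bound on $x^H-x^*$ that you never derive. (A further small gap: Lemma~\ref{lemma:incentive_function} gives concavity of $f_N$, not of $\psi(x)=\frac{\mu_T^S-x}{a_N}f_N(x)$; one has $\psi''=\frac{1}{a_N}\bigl[-2f_N'+(\mu_T^S-x)f_N''\bigr]<0$ only where $f_N'\ge 0$, i.e.\ on $[0,\overline{x}]$, so the gradient inequality must be confined to that interval.) As you yourself flag, this comparison is the crux, and the proposal stops exactly there.

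The paper closes this gap by never comparing $\psi(x^*)$ with $\psi(x^H)$ at all. Instead it compares the two equilibria at the level of sums: from $x^*\le x^H$, concavity of $f_i$, and the equilibrium characterization of Corollary~\ref{corollary2}, it derives the aggregate inequalities $\sum_{i}f_i(x^*)\ge N f_N(x^H)$ and $\sum_i\lambda_i^{*}\ge N\lambda_N^H$ (its Step 2). It then writes $\sum_i\lambda_i^{*}f_i(x^*)$ by adding and subtracting $\lambda_N^H\sum_i f_i(x^*)$, and splits the players at the largest index $d_1$ with $\lambda_i^{*}>\lambda_N^H$; the ordering $f_1(x^*)\ge\cdots\ge f_N(x^*)$ lets it bound both cross-term blocks below by $f_{d_1+1}(x^*)\sum_i(\lambda_i^*-\lambda_N^H)\ge 0$, giving $\Psi^{\Gamma}_{PNE}\ge\mu_T^Sr^S+N\lambda_N^Hf_N(x^H)=\Psi^H_{PNE}$ directly, with no need to estimate $x^H-x^*$. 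To complete your argument you would either have to prove the bonus-versus-gap inequality by some comparable mechanism, or simply graft the paper's add-and-subtract step onto your framework --- everything it needs (namely $x^*\le x^H$ and Corollary~\ref{corollary2}) you have already established.
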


\begin{proof}
Let $\lambda^* = [\lambda_1^*, \ldots, \lambda_N^*]$ and $\lambda^H = [\lambda_H, \ldots, \lambda_H]$ be the unique PNEs for the CPR games $\Gamma$ and $\Gamma^H_N$, respectively. Let $x^*=\mu_T^S - \sum_{i=1}^{N} a_i\lambda_i^*$ and $x^H=\mu_T^S - Na_N\lambda_H$ be their slackness parameters at respective PNEs.

\noindent 
\textbf{Step 1:} \textit{We show that $x^H \ge x^*$ using contradiction.} 

Let $x^* > x^H$. Recall that at PNE, $\frac{d f_i}{d x} > 0$ (Corollary~\ref{corollary2}).  Using strict concavity of $f_i$ (Lemma~\ref{lemma:incentive_function}), we have $\frac{d f_i}{d x}(x^H) > \frac{d f_i}{d x}(x^*) >0$. Therefore, $\frac{f_N(x^*)}{\frac{d f_N}{d x}(x^*)} > \frac{f_N(x^H)}{\frac{d f_N}{d x}(x^H)}= a_N\lambda_N^H$.
Recall that $f_1(x) \ge \cdots \ge f_N(x)$ for any $x$, and $\frac{d f_i}{d x}$ is independent of $i$. Hence, $\frac{f_i(x^*)}{\frac{d f_i}{d x}(x^*)} > a_N\lambda_N^H$ for any $i$. Using $h_N \ge h_i$, we get $a_i\mu_i^R=\mu_i^S +\frac{\mu_i^S}{h_i} \ge \mu_i^S +\frac{\mu_i^S}{h_N} = a_N\{\mu_i^R\}^H > a_N\lambda_N^H.$ Therefore, $a_i\lambda_i^* = \min \left\{ \frac{f_i(x^*)}{\frac{d f_i}{d x}(x^*)}, a_i\mu_i^R\right\} > a_N\lambda_N^H$, for any $i$. Hence, $x^* < x^H$, which is a contradiction. Therefore, $x^H \ge x^*$ (equivalently, $\sum_{i=1}^{N}a_i \lambda_i^* \ge N a_N \lambda_N^H $) and $\frac{d f_i}{d x}(x^*) \ge \frac{d f_i}{d x}(x^H) >0$.

\noindent
\textbf{Step 2:} \textit{We show that $\sum_i f_i(x^*) \ge Nf_N(x^H)$ \& $\sum_i \lambda_i^* \ge N \lambda_N^H $}.  

Let $d \le N$ be the support for $\lambda^*$. Therefore, $\lambda_i^*= \min\left\{\frac{f_i(x^*)}{a_i\frac{d f_i}{d x}(x^*) }, \mu_i^R\right\}$ for  every $i \le d$,  , and $\lambda_i^*= 0$ for any $i>d$. Therefore, $\sum_{i=1}^{d}\frac{f_i(x^*)}{\frac{d f_i}{d x}(x^*)} \ge \sum_{i=1}^{d}a_i \lambda_i^* \ge N a_N \lambda_N^H = N \frac{f_N(x^H)}{\frac{d f_N}{d x}(x^H)} $. Using $\frac{d f_i}{d x}(x^*) \ge \frac{d f_N}{d x}(x^H) >0$ ( $\frac{d f_i}{d x}$ is independent of $i$), we get  $\sum_{i=1}^{d}f_i(x^*) \ge Nf_N(x^H)$.  Additionally, $\sum_{i=1}^{N}a_i \lambda_i^* \ge N a_N \lambda_N^H $ implies $\sum_{i=1}^{d}\lambda_i^* \ge \sum_{i=1}^{N}\frac{a_i}{a_N} \lambda_i^* \ge N \lambda_N^H $.

\noindent
\textbf{Step 3:} \textit{We show that $\Psi^{\Gamma}_{PNE} \ge \Psi^H_{PNE}$.}

Using $h_N \ge h_i$, we have $\mu_i^R=\frac{\mu_i^S}{h_i} \ge \frac{\mu_i^S}{h_N} = \{\mu_i^R\}^H > \lambda_N^H.$ Let $d_1\le d$ be the largest index of player satisfying $\frac{f_i(x^*)}{a_i\frac{d f_i}{d x}} > \lambda_N^H$. Since $\frac{f_N(x^*)}{a_N\frac{d f_N}{d x}(x^*)} < \lambda_N^H$, $d_1 < N$. Therefore, $\lambda_i^*$ satisfies,
\begin{equation}{\label{eq:step}}
   \lambda_i^* = \begin{cases}
\min\left\{\frac{f_i(x^*)}{a_i\frac{d f_i}{d x}(x^*)}, \mu_i^R\right\} > \lambda_N^H , \ \ \ \text{for } i \le d_1,  \\
    \frac{f_i(x^*)}{a_i\frac{d f_i}{d x}(x^*)} \le \lambda_N^H, \ \ \  \text{for } d_1 + 1 \le i \le d.
    \end{cases}
\end{equation}
 Hence,
\begin{align*}
    \Psi^{\Gamma}_{PNE}&= \mu_T^Sr^S + \sum_{i=1}^{d}\lambda_i^*f_i(x^*)\\
    &\overset{(1^*)}{=} \mu_T^Sr^S + \lambda_N^H\sum_{i=1}^{d}f_i(x^*) 
    + \sum_{i=1}^{d_1}(\lambda_i^*- \lambda_N^H)f_i(x^*) \\  &+ \sum_{i=d_1+1}^{d}(\lambda_i^*- \lambda_N^H)f_i(x^*) \\
    &\overset{(2^*)}{\ge} \mu_T^Sr^S  + \lambda_N^H N f_N(x^H)
    + \sum_{i=1}^{d_1}(\lambda_i^*- \lambda_N^H)f_{d_1+1}(x^*) \\  &+ \sum_{i=d_1+1}^{d}(\lambda_i^*- \lambda_N^H)f_{d_1+1}(x^*) 
\end{align*}     
\begin{align*}
        &= \mu_T^Sr^S  + \lambda_N^H N f_N(x^H)
    + f_{d_1+1}(x^*) \sum_{i=1}^{d}(\lambda_i^*- \lambda_N^H) \\
    &\overset{(3^*)}{\ge}  \mu_T^Sr^S  + \lambda_N^H N f_N(x^H)=\Psi_{PNE}^{H},
\end{align*}
where $(1^*)$ follows by adding and subtracting $\lambda_N^H\sum_{i=1}^{d}f_i(x^*)$. $(2^*)$ follows from $\sum_{i=1}^{d}f_i(x^*) \ge Nf_N(x^H)$ (Step 2),~\eqref{eq:step}, and the fact that $f_1(x^*) \ge \cdots \ge f_N(x^*)$. $(3^*)$ follows by recalling that $\sum_{i=1}^{d}\lambda_i^* \ge N \lambda_N^H $ (Step 2).

\textbf{Step 4:} \textit{We show that $\Psi^{\Gamma}_{PNE} \ge \Psi^H_{PNE}  \ge \mu_T^Sr^S +\frac{\mu_T^S- \overline{x}}{a_N}f_N(\overline{x})$.}

Let $\overline{x}$ be the unique maximizer of $f_i$.  From Lemma~\ref{lemma:homogeneous PoA}, $ \Psi^H_{PNE} = \Psi^H_{SW} = \max\{\Psi^H\} \ge \mu_T^Sr^S + \lambda_T^Rf_N(\mu_T^S - a_N\lambda_T^R)$ for any $\lambda_T^R$.  Choosing $\lambda^R_T = \frac{\mu_T^S-\overline{x}}{a_N}$,  we obtain the desired bounds.  
 \end{proof}

\medskip
\textit{Proof of Theorem~\ref{thm:thm3}:}
The global optimum of social welfare function is upper bounded by: 
\begin{align}{\label{eq:POA_num}}
    \Psi^{\Gamma}_{SW}&= \mu_T^Sr^S+ \max_{\lambda_i^R}\left\{ \sum_{i=1}^{N}\lambda_i^Rf_i(x)\right\} \nonumber\\ 
    &\le \mu_T^Sr^S + \max_{\lambda_i^R}\left\{\lambda_T^R\right\}\max_{x}\{f_i(x)\} \nonumber\\ 
     &\overset{(1^*)}{\le} \mu_T^Sr^S +  \mu_T^S{f_i(\overline{x})} \nonumber\\ 
      &\le \mu_T^S(r^S + r^R(\overline{x})(1-p(\overline{x}))),
\end{align}
where $(1^*)$ is obtained using the system constraint $x>0$ which implies $\mu_T^S \ge \sum_{i=1}^{N}a_i\lambda_i^R \ge \lambda_T^R$. 

From Lemma~\ref{lemma:homogeneous vs heterogeneous}, $\Psi^{\Gamma}_{PNE}$ is lower bounded by:
\begin{align}{\label{eq:POA_denom}}
    \Psi^{\Gamma}_{PNE} &\ge \mu_T^Sr^S +\frac{\mu_T^S- \overline{x}}{a_N}f_N(\overline{x}) \nonumber \\
    &= \frac{1}{a_N}\left(\overline{x}a_Nr^S   + (\mu_T^S- \overline{x})(r^S+r^R(\overline{x})(1-p(\overline{x})))\right) \nonumber \\
    & \ge \frac{1}{a_N}(\mu_T^S- \overline{x})(r^S+r^R(\overline{x})(1-p(\overline{x}))).
\end{align}
Using~\eqref{eq:POA_num} and~\eqref{eq:POA_denom}, we get, $PoA = \frac{\Psi_{SW}^{\Gamma}}{\Psi_{PNE}^{\Gamma}} \le \frac{\mu_T^S a_N}{\mu_T^S-\overline{x}}$.

Now we establish the bounds on $\eta_{TRI}$ and $\eta_{LI}$. Let $x_{PNE}$ and $x_{SW}$ be the slackness parameter corresponding to the PNE and social welfare, respectively. Recall that $\frac{d f_i}{d x} >0$ (Corollary~\ref{corollary2}), for $x \in \{x_{PNE}, x_{SW}\}$. 
Hence, using strict concavity of $f_i$, we have $ x_{PNE}, x_{SW} \in (0, \  \overline{x})$.
Therefore, $\mu_T^S-\overline{x} < \sum_{i=1}^{N}a_i\{\lambda_i^R\}_{PNE} < \mu_T^S$, and $\mu_T^S-\overline{x} < \sum_{i=1}^{N}a_i\{\lambda_i^R\}_{SW} < \mu_T^S$. Hence, $\eta_{TRI}$ and $\eta_{LI}$ are upper bounded by:

\begin{equation*}
 \eta_{TRI}=\frac{({\lambda_T^R})_{SW}}{{(\lambda_T^R})_{PNE}} < \frac{\mu_T^S a_N}{(\mu_T^S - \overline{x})a_1}, \text{and}
 \end{equation*}
 \begin{equation*}
    \eta_{LI}=\frac{(\sum_{i=1}^{N}a_i{\lambda_i^R})_{PNE}}{(\sum_{i=1}^{N}a_i{\lambda_i^R})_{SW}} < \frac{\mu_T^S}{\mu_T^S - \overline{x}}.
\end{equation*}
\oprocend

\footnotesize 
\bibliographystyle{IEEEtran}
\bibliography{mybib}

\begin{thebibliography}{10}
\providecommand{\url}[1]{#1}
\csname url@samestyle\endcsname
\providecommand{\newblock}{\relax}
\providecommand{\bibinfo}[2]{#2}
\providecommand{\BIBentrySTDinterwordspacing}{\spaceskip=0pt\relax}
\providecommand{\BIBentryALTinterwordstretchfactor}{4}
\providecommand{\BIBentryALTinterwordspacing}{\spaceskip=\fontdimen2\font plus
\BIBentryALTinterwordstretchfactor\fontdimen3\font minus
  \fontdimen4\font\relax}
\providecommand{\BIBforeignlanguage}[2]{{%
\expandafter\ifx\csname l@#1\endcsname\relax
\typeout{** WARNING: IEEEtran.bst: No hyphenation pattern has been}%
\typeout{** loaded for the language `#1'. Using the pattern for}%
\typeout{** the default language instead.}%
\else
\language=\csname l@#1\endcsname
\fi
#2}}
\providecommand{\BIBdecl}{\relax}
\BIBdecl

\bibitem{gupta2019achieving}
P.~Gupta, S.~D. Bopardikar, and V.~Srivastava, ``Achieving efficient
  collaboration in decentralized heterogeneous teams using common-pool resource
  games,'' in \emph{2019 IEEE 58th Conference on Decision and Control
  (CDC)}.\hskip 1em plus 0.5em minus 0.4em\relax IEEE, 2019, pp. 6924--6929.

\bibitem{haas2016secrets}
M.~Haas and M.~Mortensen, ``The secrets of great teamwork.'' \emph{Harvard
  Business Review}, vol.~94, no.~6, pp. 70--6, 2016.

\bibitem{burns2005mechanistic}
T.~Burns and G.~Stalker, ``Mechanistic and organic systems,'' \emph{Organ
  Behav}, vol.~2, pp. 214--225, 2005.

\bibitem{lunenburg2012mechanistic}
F.~C. Lunenburg, ``Mechanistic-organic organizations -- an axiomatic theory:
  Authority based on bureaucracy or professional norms,'' \emph{International
  Journal of Scholarly Academic Intellectual Diversity}, vol.~14, no.~1, pp.
  1--7, 2012.

\bibitem{keser1999strategic}
C.~Keser and R.~Gardner, ``Strategic behavior of experienced subjects in a
  common pool resource game,'' \emph{International Journal of Game Theory},
  vol.~28, no.~2, pp. 241--252, 1999.

\bibitem{hota2016fragility}
A.~R. Hota, S.~Garg, and S.~Sundaram, ``Fragility of the commons under
  prospect-theoretic risk attitudes,'' \emph{Games and Economic Behavior},
  vol.~98, pp. 135--164, 2016.

\bibitem{goodrich2007using}
M.~A. Goodrich, J.~L. Cooper, J.~A. Adams, C.~Humphrey, R.~Zeeman, and B.~G.
  Buss, ``Using a mini-{UAV} to support wilderness search and rescue: Practices
  for human-robot teaming,'' in \emph{Safety, Security and Rescue Robotics,
  2007. SSRR 2007. IEEE International Workshop on}.\hskip 1em plus 0.5em minus
  0.4em\relax IEEE, 2007, pp. 1--6.

\bibitem{JP-VS-etal:12t}
J.~Peters, V.~Srivastava, G.~Taylor, A.~Surana, M.~P. Eckstein, and F.~Bullo,
  ``Human supervisory control of robotic teams: Integrating cognitive modeling
  with engineering design,'' \emph{IEEE Control System Magazine}, vol.~35,
  no.~6, pp. 57--80, 2015.

\bibitem{VS-RC-CL-FB:11zc}
V.~Srivastava, R.~Carli, C.~Langbort, and F.~Bullo, ``Attention allocation for
  decision making queues,'' \emph{Automatica}, vol.~50, no.~2, pp. 378--388,
  2014.

\bibitem{PG-VS:18d}
P.~Gupta and V.~Srivastava, ``Optimal fidelity selection for human-in-the-loop
  queues using semi-{M}arkov decision processes,'' in \emph{American Control
  Conference}, Philadelphia, PA, Jul. 2019, pp. 5266--5271.

\bibitem{marden2018game}
J.~R. Marden and J.~S. Shamma, ``Game theory and control,'' \emph{Annual Review
  of Control, Robotics, and Autonomous Systems}, vol.~1, pp. 105--134, 2018.

\bibitem{arslan2007autonomous}
G.~Arslan, J.~Marden, and J.~Shamma, ``Autonomous vehicle-target assignment: A
  game-theoretical formulation,'' \emph{Journal of Dynamic Systems Measurement
  and Control-Transactions of the Asme}, vol. 129, 09 2007.

\bibitem{basar1999dynamic}
T.~Ba{\c{s}}ar and G.~J. Olsder, \emph{Dynamic Noncooperative Game
  Theory}.\hskip 1em plus 0.5em minus 0.4em\relax SIAM, 1999, vol.~23.

\bibitem{roughgarden2009intrinsic}
T.~Roughgarden, ``Intrinsic robustness of the price of anarchy,'' in
  \emph{Proceedings of the forty-first annual ACM symposium on Theory of
  computing}, 2009, pp. 513--522.

\bibitem{marden2014generalized}
J.~R. Marden and T.~Roughgarden, ``Generalized efficiency bounds in distributed
  resource allocation,'' \emph{IEEE Transactions on Automatic Control},
  vol.~59, no.~3, pp. 571--584, 2014.

\bibitem{deori2018price}
L.~Deori, K.~Margellos, and M.~Prandini, ``Price of anarchy in electric vehicle
  charging control games: When nash equilibria achieve social welfare,''
  \emph{Automatica}, vol.~96, pp. 150--158, 2018.

\bibitem{paccagnan2019utility}
D.~{Paccagnan}, R.~{Chandan}, and J.~R. {Marden}, ``Utility design for
  distributed resource allocation - part {I}: Characterizing and optimizing the
  exact price of anarchy,'' \emph{IEEE Transactions on Automatic Control}, pp.
  1--1, 2019.

\bibitem{gao2014modeling}
F.~Gao, M.~L. Cummings, and E.~T. Solovey, ``Modeling teamwork in supervisory
  control of multiple robots,'' \emph{IEEE Transactions on Human-Machine
  Systems}, vol.~44, no.~4, pp. 441--453, 2014.

\bibitem{hong2016human}
A.~Hong, ``Human-robot interactions for single robots and multi-robot teams,''
  Ph.D. dissertation, University of Toronto, 2016.

\bibitem{srivastava2014knapsack}
V.~Srivastava and F.~Bullo, ``Knapsack problems with sigmoid utilities:
  Approximation algorithms via hybrid optimization,'' \emph{European Journal of
  Operational Research}, vol. 236, no.~2, pp. 488--498, 2014.

\bibitem{mekdeci2009modeling}
B.~Mekdeci and M.~Cummings, ``Modeling multiple human operators in the
  supervisory control of heterogeneous unmanned vehicles,'' in
  \emph{Proceedings of the 9th Workshop on Performance Metrics for Intelligent
  Systems}.\hskip 1em plus 0.5em minus 0.4em\relax ACM, 2009, pp. 1--8.

\bibitem{le1997theory}
P.~Le~Gall, ``The theory of networks of single server queues and the tandem
  queue model,'' \emph{International Journal of Stochastic Analysis}, vol.~10,
  no.~4, pp. 363--381, 1997.

\bibitem{thomopoulos2012fundamentals}
N.~T. Thomopoulos, \emph{Fundamentals of queuing systems: statistical methods
  for analyzing queuing models}.\hskip 1em plus 0.5em minus 0.4em\relax
  Springer Science \& Business Media, 2012.

\bibitem{altman2005applications}
E.~Altman, ``Applications of dynamic games in queues,'' in \emph{Advances in
  dynamic games}.\hskip 1em plus 0.5em minus 0.4em\relax Springer, 2005, pp.
  309--342.

\bibitem{xia2014service}
L.~Xia, ``Service rate control of closed jackson networks from game theoretic
  perspective,'' \emph{European Journal of Operational Research}, vol. 237,
  no.~2, pp. 546--554, 2014.

\bibitem{hota2018controlling}
A.~R. Hota and S.~Sundaram, ``Controlling human utilization of failure-prone
  systems via taxes,'' \emph{arXiv preprint arXiv:1802.09490}, 2018.

\bibitem{ostrom1994rules}
E.~Ostrom, R.~Gardner, J.~Walker, and J.~Walker, \emph{Rules, Games, and
  Common-Pool Resources}.\hskip 1em plus 0.5em minus 0.4em\relax University of
  Michigan Press, 1994.

\bibitem{voorneveld2000best}
M.~Voorneveld, ``Best-response potential games,'' \emph{Economics letters},
  vol.~66, no.~3, pp. 289--295, 2000.

\bibitem{dubey2006strategic}
P.~Dubey, O.~Haimanko, and A.~Zapechelnyuk, ``Strategic complements and
  substitutes, and potential games,'' \emph{Games and Economic Behavior},
  vol.~54, no.~1, pp. 77--94, 2006.

\bibitem{jensen2009stability}
M.~K. Jensen, ``Stability of pure strategy nash equilibrium in best-reply
  potential games,'' \emph{University of Birmingham, Tech. Rep}, 2009.

\bibitem{cassandras2009introduction}
C.~G. Cassandras and S.~Lafortune, \emph{Introduction to Discrete Event
  Systems}.\hskip 1em plus 0.5em minus 0.4em\relax Springer Science \& Business
  Media, 2009.

\bibitem{jensen2010aggregative}
M.~K. Jensen, ``Aggregative games and best-reply potentials,'' \emph{Economic
  Theory}, vol.~43, no.~1, pp. 45--66, 2010.

\bibitem{schipper2004pseudo}
\BIBentryALTinterwordspacing
B.~Schipper, ``Pseudo-potential games,'' University of Bonn, Germany, Tech.
  Rep., 2004, working paper. [Online]. Available: \url{available at
  citeseer.ist.psu.edu/schipper04pseudopotential.html}
\BIBentrySTDinterwordspacing

\bibitem{burden19852}
R.~Burden and J.~Faires, ``The bisection method,'' \emph{Numerical Analysis},
  pp. 48--56, 2011.

\bibitem{luenberger1984linear}
D.~G. Luenberger and Y.~Ye, \emph{Linear and Nonlinear Programming}.\hskip 1em
  plus 0.5em minus 0.4em\relax Springer, 1984, vol.~2.

\bibitem{berge1997topological}
C.~Berge, \emph{Topological Spaces: Including a Treatment of Multi-valued
  Functions, Vector Spaces, and Convexity}.\hskip 1em plus 0.5em minus
  0.4em\relax Courier Corporation, 1997.

\end{thebibliography}

\end{document}